\newtheorem{theorem}{Theorem}[section]
\newtheorem{lemma}[theorem]{Lemma}
\newtheorem{corollary}[theorem]{Corollary}
\newtheorem{remark}[theorem]{Remark}
\newtheorem{proposition}[theorem]{Proposition}
\theoremstyle{definition}
\newtheorem{definition}[theorem]{Definition}
\newtheorem{assumption}[theorem]{Assumption}
\newtheorem{condition}[subsubsection]{Condition}
\theoremstyle{remark}
\newcommand{ \mr }{ \mathbb{R} }
\newcommand{ \mch }{ \mathcal{H} }
\numberwithin{equation}{section}
\begin{document}

\title[Parabolic equations with unbounded lower-order coefficients]{Parabolic equations with unbounded lower-order coefficients in Sobolev spaces with mixed norms}

\author{Doyoon Kim}
\address{Department of Mathematics, Korea University, 145 Anam-ro, Seongbuk-gu,
Seoul, 02841, Republic of Korea}
\email{doyoon\_kim@korea.ac.kr}
\thanks{D. Kim and K. Woo were supported by the National Research Foundation of Korea (NRF) grant funded by the Korea government (MSIT) (2019R1A2C1084683).}

\author{Seungjin Ryu}
\address{Department of Mathematics, University of Seoul, 163 Seoulsiripdaero, Dongdaemun-gu, 
Seoul, 02504, Republic of Korea}
\email{seungjinryu@uos.ac.kr}
\thanks{S. Ryu was supported by NRF-2017R1C1B1010966 and NRF-2020R1C1C1A01014310.}

\author{Kwan Woo}
\address{Department of Mathematics, Korea University, 145 Anam-ro, Seongbuk-gu,
Seoul, 02841, Republic of Korea}
\email{wkwan@korea.ac.kr}

\subjclass[2020]{35K10, 35R05, 46E35}

\keywords{parabolic equations, unbounded lower-order coefficients, Sobolev spaces, embedding theorem, Reifenberg flat domains}

\begin{abstract}
We prove the $L_{p,q}$-solvability of parabolic equations in divergence form with full lower order terms. The coefficients and non-homogeneous terms belong to mixed Lebesgue spaces with the lowest integrability conditions.
In particular, the coefficients for the lower-order terms are not necessarily bounded.
We study both the Dirichlet and conormal derivative boundary value problems on irregular domains.
We also prove embedding results for parabolic Sobolev spaces, the proof of which is of independent interest.
\end{abstract}

\maketitle

\section{Introduction}

There are many well-established results for $L_p$-theory on linear parabolic equations of the form
\begin{equation}
							\label{eq0407_01}
u_t - D_i \left( a^{ij}(t,x) D_j u +a^i(t,x) u \right)  + b^i(t,x) D_i u  +c(t,x) u = D_ig_i(t,x) + f(t,x).
\end{equation}
That is, for given $g_i, f \in L_p$, or $g_i, f \in L_{p,q}$, many papers  investigate whether there exists a unique solution $u$ of \eqref{eq0407_01} in an appropriate parabolic Sobolev space under some regularity assumptions on the coefficients $a^{ij}$, $a^i$, $b^i$ and $c$.
See, for instance, \cite{MR3073000, MR2680179, MR2835999, MR2764911}, where their main objective is to obtain the lowest possible regularity assumptions on $a^{ij}$ (and on the boundary of the domain) for the unique solvability of equations or systems in Sobolev spaces, while the lower-order coefficients $a^i$, $b^i$, and $c$ are assumed to be either zero or bounded.
Indeed, if the lower-order coefficients are bounded, it is not difficult to derive the desired results whenever one obtains the unique solvability of equations or systems without lower-order terms.

One may ask if the same $L_p$-theory can be established for equations as in \eqref{eq0407_01} with unbounded lower-order coefficients.
To the best of the authors' knowledge, there seem few papers investigating $L_p$-theory for parabolic equations as in \eqref{eq0407_01} having possibly unbounded lower-order coefficients.
One possible reference about $L_2$-theory, not $L_p$-theory, would be the context of \cite[Chapter\,III]{MR0241822}, where the authors discussed the existence and uniqueness to the generalized solution of \eqref{eq0407_01} with a proper boundary condition, under the assumptions that $a^{ij}$ is uniformly elliptic and bounded and that
$$
a^i, \ b^i, \ |c|^{1/2} \in L_{q,r},     \qquad    g_i \in L_{2,2},        \qquad     f \in L_{q_1, r_1}
$$
for $i=1,\ldots, d$. 
Here, if $d\geq3$, the indices satisfy the conditions
$$  
\frac{d}{q}+\frac{2}{r} \leq 1, \qquad q \in [d, \infty], \quad r \in  [2,\infty],
$$
and
$$
\frac{d}{q_1} + \frac{2}{r_1}  \leq 2 +\frac{d}{2}, \qquad q_1 \in \left[ \frac{2d}{d+2}, 2 \right], \qquad r_1 \in [1,2].
$$
An additional smallness condition is imposed on $a^i, b^i, c$ when $r = \infty$.
They also considered the cases $d=1,2$. Note that the coefficients $a^i$, $b^i$, and $c$ are not necessarily bounded.
On the other hand, for qualitative properties of solutions of parabolic (and elliptic) equations with unbounded lower-order coefficients, see \cite{MR2760150}.
In this paper, we establish an $L_{p,q}$ version of the results developed in \cite{MR0241822}.
This in particular means that the results in \cite{MR0241822} can be covered as a special case of our main results when $p = q = 2$ (see Remark \ref{rmk206}.).
We find in this paper the optimal spatial and time integrability conditions for the lower-order coefficients as functions in Lebesgue spaces with mixed norms to obtain the unique solvability of the equation \eqref{eq0407_01} in parabolic Sobolev spaces with mixed norms.
As a necessary step in doing so, we also find optimal spatial and time summability conditions on $f$ in \eqref{eq0407_01}.
We deal with both the Dirichlet and conormal derivative boundary conditions.

As is shown in \cite{MR3488249}, when establishing $L_p$-theory, it is necessary to impose some regularity assumptions on the leading coefficients $a^{ij}$ even if the lower-order coefficients are zero or bounded.
In fact, there have been many research activities in finding regularity assumptions on $a^{ij}$ (for instance, much less irregular than uniformly continuous) so that one can establish $L_p$-theory for equations with such leading coefficients.
One well-known class of possibly discontinuous coefficients is that of vanishing mean oscillations (VMO) \cite{MR1191890, MR1088476, MR1239929, MR2304157}.
It is also possible to have more irregular ones than VMO coefficients.
Some of such irregular coefficients are those merely measurable in one spatial variables, that is, the coefficients are allowed to have no regularity assumptions as functions of one spatial variable.
See \cite{MR2835999, MR3073000} and references therein.
Moreover, in the $L_p$-theory for  parabolic equations/systems, one can further relax regularity assumptions so that all the elements of the matrix $a^{ij}(t,x)$ except, for example, $a^{11}(t,x)$ can be merely measurable in the time variable and one spatial variable.
The coefficient $a^{11}(t,x)$ is only measurable in $t$ or in one spatial variable.
See \cite{MR2764911} for more details.
Concerning the regularity of the domain, we do need some conditions unless we confine ourselves to $L_2$-theory.
See \cite{MR1331981}.
Recent results show that one can deal with the equation \eqref{eq0407_01} in $\Omega$ if $\Omega$ is a Reifenberg flat domain having  sufficient flatness.
See \cite{MR2680179, MR2835999} and references therein.
In this paper, because the main objective of our investigation is to find appropriate summability conditions for lower-order coefficients so that one can embrace unbounded lower-order coefficients in $L_p$-theory for parabolic equations, we do not specify regularity conditions for $a^{ij}$ as well as for the boundary of the domain.
Instead, we take, as an assumption, the solvability of the equation \eqref{eq0407_01} {\em without} lower-order terms in appropriate Sobolev spaces with mixed norms.
See Assumption \ref{aomega} for the Dirichlet boundary condition case and Assumption \ref{comega} for the conormal derivative boundary condition case.
Note that in these assumptions the function $f$ on the right-hand side of \eqref{eq0407_01} has the same summability as those of $g_i$ and the solutions.
These assumptions indicate that the leading coefficients $a^{ij}$ in this paper can be those available in the literature as long as $L_p$-theory for equations with $a^{ij}$ and with zero or bounded lower-order coefficients has been successfully established.
In this way, without tediously stating legitimate regularity assumptions on $a^{ij}$, we can cover all the irregular leading coefficients  in the literature at once.
Similarly, instead of imposing a particular regularity assumption on the domain, we state an assumption on the domain.
See Assumption \ref{dom} and the comment above the assumption.
To justify our assumption on the domain, we prove that Reifenberg flat domains, well-known irregular domains for $L_p$-theory, satisfy Assumption \ref{dom}.
We also present the embedding results for the whole Euclidean space and a half space.
In fact, the embedding results proved in Appendix (Section \ref{appendix}) are another main results of this paper because the time regularity conditions required for the embedding are considerably more general than those in the literature.

When Sobolev spaces with {\em mixed norms} are considered as solutions spaces for parabolic equations/systems, one can find the unique solvability results in \cite{MR2764911, MR2352490}, where, as noted earlier, all the lower-order coefficients are assumed to be bounded.
On the other hand, in the elliptic case, there are relatively many results about the unique solvability in the Sobolev space $W_p^1$. See, for instance, \cite{MR3900848, MR3623550, MR3328143}.  
In particular, assuming that $c=0$ and that $a^i, b^i \in L_r(\Omega)$, where $2<r<\infty$ if $d=2$ and $d \leq r < \infty$ if  $d \geq 3$,
the authors of \cite{MR3900848, MR3623550} proved $W^1_p$-estimates when $\Omega$ has a small Lipschitz constant and $a^{ij}(x)$ have small BMO semi-norms in the $x$ variables.
Their approach is a functional-analytic argument and is influenced by the method presented in \cite{MR1908676, MR2476418}.
For non-divergence type elliptic equations with unbounded lower-order coefficients, see a recent paper \cite{arXiv:2004.01778} by Krylov.
Our approach is more elementary.
It is based on an anisotropic embedding inequality (see Assumption \ref{dom} ($\Omega$) as well as Theorems \ref{thm602} and \ref{thm702}) and the duality argument (see Lemma \ref{190405@lem1}).
When applying the duality argument, we use Assumption \ref{aomega} ($p,q$) to guarantee 
the solvability of equations without lower-order terms for the full range $1 < p,q <\infty$.
 
The main objective in the duality argument is to resolve the lower integrability  issue on $f$ in \eqref{eq0407_01}, which is not considered in the previous results \cite{MR2764911, MR3900848, MR3623550, MR3328143, MR2352490}.
For elliptic equations, as noticed in \cite{MR1405255}, one may use the harmonic extension method to lower the required summability on $f$.
For instance, in the elliptic case with the Dirichlet boundary condition, $f$ can be replaced by $D_i(D_iF)$, where $F$ is a solution to $\Delta F = f \chi_\Omega$ in a ball $B \supset \Omega$ with the estimate $\| D_i F \|_{L_p} \le C \| f \|_{ L_{p_1} }$, $p_1 = dp/(d+p)$.
However, this idea is not working for parabolic equations unless the boundary of the domain is sufficiently regular or the summability of $\|f(t,\cdot)\|_{L_{p_1}(\Omega)}$ in time is sufficiently high.
Once we have mixed-norm estimates for equations without lower-order terms, but with $f$ having low integrability, by moving the lower-order terms to the right-hand side of the equations we derive desired estimates for equations as in \eqref{eq0407_01} with full lower order terms.
After that, we show that such solutions have better regularity with respect to the time variable if $p,q \in [2,\infty)$ by showing that solutions are in fact in the parabolic space $V_{p,q}$. 
See the definitions of the spaces such as $V_{p,q}$ and $\mathcal{H}^1_{p,q}$ in Section \ref{main result}.
We remark that $V_{p,q}$ is not appropriate for the solvability with $p, q \in (1, 2)$, while
$\mathcal{H}^1_{p,q}$ is not wide enough to accommodate solutions of equations with unbounded lower-order coefficients with $p,q \in (2,\infty)$. 
See Remark \ref{counter2}.

The rest of the paper is organized as follows. 
In Section \ref{main result}, we state our assumptions and main results after introducing some function spaces.
In Section \ref{dirichlet} we present the proofs of our main results for the Dirichlet and the conormal cases after providing  the key lemma (Lemma \ref{190405@lem1}) for equations without lower-order terms.  
Section \ref{Vspace} discusses better time regularity of solutions for $p,q \in [2,\infty)$.
Finally, we present domains (with a detailed proof for Reifenberg flat domains) satisfying Assumption \ref{dom} in Section \ref{appendix}, where one can also find a brief comment on the elliptic case.

\section{ Assumptions and main results}
\label{main result}

We denote by $\mr^d$, where $d$ is a positive integer, a $d$-dimensional Euclidean space and a point in $\mr^d$ by $x = (x_1, \ldots, x_d) = (x_1, x')$. 
Similarly, $\mr^{d+1}:= \mr \times \mr^d = \{(t, x) : t \in \mr, \,x \in \mr^d\}$ and $\mr_+^d := \{(x_1, x') : x_1 > 0, \,x' \in \mr^{d-1}\}$. Throughout the paper, we assume $T$ to be a positive real number, unless specified otherwise.
We set $\Omega_T := (0,T) \times \Omega$, where $\Omega$ is a domain in $\mr^d$.
For $p, q \in [1,\infty)$, $L_{p,q}(\Omega_T)$ is the set of all measurable functions on $\Omega_T$ with a finite norm
$$
\|v\|_{L_{p,q}(\Omega_T)}=
\left(\int_0^{T}\left(\int_{\Omega}|v(t,x)|^p\,dx\right)^{q/p}\,dt\right)^{1/q}.
$$
For $k\in \{1,2\}$, we set
$$
W^{k,1}_{p,q}(\Omega_T)=\{v:v_t,\,  D^l v \in L_{p,q}(\Omega_T), \, | l |\le k \}
$$
equipped with a norm
$$
\|v\|_{W^{k,1}_{p,q}(\Omega_T)}=\|v_t\|_{L_{p,q}(\Omega_T)}+\sum_{| l |\le k}\|D^l v\|_{L_{p,q}(\Omega_T)}.
$$
We also set
$$
W^{1,0}_{p,q}(\Omega_T)  = \left\{ v :  v, Dv \in  L_{p,q}(\Omega_T)  \right\}
$$
with
$$
\|  v\|_{W^{1,0}_{p,q}(\Omega_T)} = \| v \|_{L_{p,q}(\Omega_T)} + \| Dv \|_{L_{p,q}(\Omega_T)} .
$$
By $v\in \mathbb{H}^{-1}_{p,q}(\Omega_T)$ we mean that there exist $g=(g_1,\ldots,g_d)\in L_{p,q}(\Omega_T)^d$ and $f\in L_{p,q}(\Omega_T)$ such that 
$$
v=D_i g_i +f   \qquad  \text{ in } \quad  \Omega_T
$$
in the distribution sense and the norm
$$
\|v\|_{\mathbb{H}^{-1}_{p,q}(\Omega_T)} =\inf\left\{ \|g\|_{L_{p,q}(\Omega_T)}+\|f\|_{L_{p,q}(\Omega_T)}: v=D_i g_i +f \right\}
$$
is finite.

We define $\mch^1_{p,q}(\Omega_T)$ and $V_{p,q}(\Omega_T)$ as the sets of all functions in $W^{1,0}_{p,q}(\Omega_T)$ having the following finite norms, respectively,
$$
\|v\|_{\mch^1_{p,q}(\Omega_T)}:=\|v_t\|_{\mathbb{H}^{-1}_{p,q}(\Omega_T)}+\|v\|_{W^{1,0}_{p,q}(\Omega_T)},
$$
$$
\|v\|_{V_{p,q}(\Omega_T)}:=\operatorname*{ess\,sup}_{0<t<T} \|v(t, \cdot)\|_{L_p(\Omega)}+\|v\|_{W^{1,0}_{p,q}(\Omega_T)}.
$$

We denote by $\mathring{W}^{1,0}_{p,q}(\Omega_T)$, $\mathring{\mch}^1_{p,q}(\Omega_T)$, and 
$\mathring{V}_{p,q}(\Omega_T)$ the closures of $C^\infty_0([0, T] \times \Omega)$ in $W^{1,0}_{p,q}(\Omega_T)$, $\mch^1_{p,q}(\Omega_T)$, and $V_{p,q}(\Omega_T)$, respectively, where $C^\infty_0( [0, T] \times \Omega)$ is the set of all infinitely differentiable functions on $[0, T] \times \Omega$ with compact support in $[0, T] \times \Omega$.

The equation we consider in this paper is
\begin{equation}
							\label{eq0402_01}
\mathcal{P} u = D_i g_i + f
\end{equation}
in $\Omega_T$,
where
$$
\mathcal{P}u = -u_t +D_i \left( a^{ij}(t,x) D_j u +a^i(t,x) u \right)  + b^i(t,x) D_i u  +c(t,x) u.
$$

\subsection{Assumption on domain}

We assume that the domain $\Omega$ enjoys the embedding inequality \eqref{imb}, which is indeed satisfied by domains such as Lipschitz domains and Reifenberg flat domains.
Instead of specifying the regularity of the boundary (that is, instead of assuming that the domain is, for instance, a Reifenberg flat domain), we require the domain to satisfy the embedding inequality, which can be derived under a sufficient regularity assumption on the boundary of the domain.
In this way, we can include various classes of domains in our results, provided that their boundaries are regular enough to insure the embedding inequality.
In Section \ref{appendix} we present examples of domains satisfying Assumption \ref{dom}.
In particular, we prove there that  a Reifenberg flat domain satisfies Assumption \ref{dom}.

\begin{assumption}[$\Omega$]
\label{dom}
Let  $\Omega$ be a domain in $\mr^d$ satisfying the following. 
For $u \in W_{p, q}^{1, 0}(\Omega_T)$ with $p, q \in [1,\infty]$, suppose
$$
u_t = D_ig_i + \sum_{k=1}^{m}f_k \quad \textrm{in} \quad \Omega_T
$$
in the distribution sense where $g = (g_1, \ldots, g_d) \in \left( L_{p, q}(\Omega_T) \right)^d$ and $f_k \in L_{p_k, q_k}(\Omega_T)$ with $p_k, q_k  \in (0,\infty]$, $k = 1, \ldots, m$.
Then we have
\begin{equation}
\label{imb}
\|u\|_{L_{p_0,q_0}(\Omega_T)} \leq C \left( \|u\|_{W_{p, q}^{1, 0}(\Omega_T)} + \|g\|_{L_{p, q}(\Omega_T)} + \sum_{k=1}^m \|f_k\|_{L_{p_k, q_k}(\Omega_T)} \right), 
\end{equation}
provided that $p_0 \in [p,\infty]$, $q_0 \in [q,\infty]$, $p_k \in (0,\infty]$, and $q_k \in (0,\infty]$ satisfy
either (i) or (ii) of the following conditions.
\begin{enumerate}
\item[(i)] If $q_0 = q$, then
\begin{equation}
							\label{eq0602_03}
\frac{d}{p} \leq 1 + \frac{d}{p_0}, \quad (p,p_0) \neq \left( d(\geq 2), \infty\right).
\end{equation}
In this case $p_k$ and $q_k$, $k = 1, \ldots, m$, are arbitrary.

\item[(ii)] If $q_0 > q$, then
\begin{equation}
							\label{eq0602_01}
\frac{d}{p} + \frac{2}{q} \leq 1 + \frac{d}{p_0} + \frac{2}{q_0},
\end{equation}
and
\begin{equation}
	\label{eq0602_02}
1 < q < q_0 < \infty \quad \textrm{if} \quad \frac{d}{p} + \frac{2}{q} = 1 + \frac{d}{p_0} + \frac{2}{q_0}.
\end{equation}
In this case, $p_k$ and $q_k$ are  real numbers such that
\begin{equation}
							\label{eq0603_01}
p_k \in [1,   p_0], \quad q_k \in [1,  q_0],
\end{equation}

\begin{equation}
							\label{eq0603_02}
\frac{d}{p_k} + \frac{2}{q_k} \leq 2 + \frac{d}{p_0} + \frac{2}{q_0}.
\end{equation}
\begin{equation}
	\label{eq0603_03}
1 = q_k < q_0 = \infty  \quad \textrm{or}  \quad 1 < q_k < q_0 < \infty \quad \textrm{if} \quad \frac{d}{p_k} + \frac{2}{q_k} = 2 + \frac{d}{p_0} + \frac{2}{q_0}.
\end{equation}
\end{enumerate}

The constant $C = C_{p, q, p_0, q_0, p_k, q_k}$ in \eqref{imb} may vary depending on the choice of eligible $(p,q,p_0,q_0, p_k, q_k)$, but independent of $u$.
For simplicity, we may write $C_k$ for $C_{p, q, p_0, q_0, p_k, q_k}$ or for the constant $C$ when $p, q, p_0, q_0, p_k, q_k$ are replaced by some related parameters.
\end{assumption}

\begin{remark}
							\label{rem0602_1}
Note that if $(p_k,q_k)$ is given by $(p,q)$ in Assumption \ref{dom}, then $(p,q)$ satisfies \eqref{eq0603_01} and \eqref{eq0603_02} (the borderline case \eqref{eq0603_03} is excluded by the condition \eqref{eq0602_01}), provided that $(p,q)$ and $(p_0,q_0)$ satisfy the conditions in Assumption \ref{dom}.
Thus, the estimate \eqref{imb} holds if $(p_k,q_k)$ is replaced by $(p,q)$.
\end{remark}

\begin{remark}
							\label{rem0430_1}
If $\Omega = \mr^d$, $p, q \in (1, \infty)$, and $(p_k, q_k) = (p, q)$, $k = 1, \ldots, m$, the embedding inequality in Assumption \ref{dom} is easily obtained from the embedding inequality for $W_{p,q}^{2,1}(\mr^d_T)$.
See, for instance, \cite{MR519341}.
However, even if $u \in \mathcal{H}_{p,q}^1(\Omega_T)$ vanishes on the lateral boundary of $\Omega_T$, one may not obtain the inequality \eqref{imb} by applying the embedding result for $\mr^d$ to the zero extension $\bar{u}$ of $u$, that is,
$$
\bar{u} = \left\{
\begin{aligned}
u \quad &\text{in} \quad (t,x) \in \Omega_T,
\\
0 \quad &\text{in} \quad (t,x) \in (0,T) \times \left(\mr^d \setminus \Omega\right),
\end{aligned}
\right.
$$
because $\bar{u}$ is not necessarily in $\mathcal{H}_{p,q}^1(\mr^d_T)$.
\end{remark}

\subsection{Assumptions on the non-homogeneous terms}
\label{condi0302_2}


For $p, q \in (1, \infty )$, we consider \eqref{eq0402_01} in $\Omega_T$ with $g_i \in L_{p,q}(\Omega_T)$, $i=1,\ldots,d$, and $f \in L_{p_1,q_1}(\Omega_T)$, where $p_1$ and $q_1$ are positive real numbers such that
\begin{equation}
	\label{40111}
\left(p_1, q_1, p\right) \ne \left(1, q, \frac{d}{d-1} \right) \quad \textrm{if} \quad d \ge 2, \quad p_1 \in [1, p], \quad q_1 \in [1, q],
\end{equation}
\begin{equation}
\label{401}
\frac{d}{p_1} + \frac{2}{q_1} \le 1+\frac{d}{p} + \frac{2}{q},
\end{equation}
and
\begin{equation}
	\label{4011}
q_1 >1 \quad \textrm{if} \quad \frac{d}{p_1} + \frac{2}{q_1} = 1+\frac{d}{p} + \frac{2}{q}.
\end{equation}

\subsection{Assumptions on the coefficients}
 Throughout the paper, we assume that the leading coefficients $a^{ij}$ of the operator $\mathcal{P}$ in \eqref{eq0402_01} satisfy the following. There exists a constant $\delta \in (0,1)$ such that
the coefficients $a^{ij}(t,x)$, $1 \le i,j \le d$, satisfy
\begin{equation}
							\label{eq0402_02}
 | a^{ij}(t,x) | \le \delta^{-1} \quad \textrm{ and } \quad  a^{ij}(t,x) \xi_i \xi_j  \ge \delta | \xi|^2
\end{equation}
for all $(t,x) \in \mr \times \mr^{d}$ and $\xi=(\xi_1, \ldots, \xi_d) \in \mr^d$.

\subsubsection{Algebraic conditions on the summability for lower-order coefficients}
\label{condi0302_1}

For $p,q\in (1,\infty)$ and $i = 1, \ldots , d$, we consider the lower-order coefficients $a^i$, $b^i$, and $c$ of the operator $\mathcal{P}$ such that
\begin{equation}
							\label{eq0402_03}
a^i \in L_{\ell_1, r_1}(\Omega_T),  \quad
b^i \in L_{\ell_2, r_2}(\Omega_T), \quad 
c \in L_{\ell_3, r_3}(\Omega_T),
\end{equation}
where the pairs $(\ell_k, r_k)$, $k\in \{1,2,3\}$, are determined by $d, p, q$ as follows.

Set $\ell_1 \in (d, \infty] \cap [p, \infty]$ and $r_1 \in [2,\infty) \cap [q, \infty)$ such that
$$
\frac{d}{\ell_1} + \frac{2}{r_1} \leq 1 \quad \text{and} \quad
r_1 > q \quad \text{if} \quad \frac{d}{\ell_1} + \frac{2}{r_1} = 1.
$$
Note that
\begin{equation*}
\frac{\ell_1 p}{\ell_1 - p} \in [p,\infty], \quad \frac{r_1q}{r_1-q} \in (q,\infty]
\end{equation*}
and the pair $\displaystyle\left(\frac{\ell_1p}{\ell_1 - p}, \frac{r_1q}{r_1 - q} \right)$ satisfies \eqref{eq0602_01} and \eqref{eq0602_02} in place of $(p_0, q_0)$. 

Set $\ell_2 \in (d, \infty] \cap [{p}/{(p-1)}, \infty]$ and $r_2 \in [2,\infty) \cap [{q}/{(q-1)}, \infty)$ such that
$$
\frac{d}{\ell_2} + \frac{2}{r_2} \leq 1 \quad \text{and} \quad r_2 > \frac{q}{q-1} \quad
\text{if} \quad \frac{d}{\ell_2} + \frac{2}{r_2} = 1.
$$
Note that the pair $\displaystyle\left(\frac{\ell_2p}{\ell_2 + p}, \frac{r_2q}{r_2 + q}\right)$ satisfies \eqref{40111}--\eqref{4011} in place of $(p_1, q_1)$.
In particular, we have $\displaystyle\frac{r_2q}{r_2 + q} \in [1,q)$ instead of $\displaystyle\frac{r_2q}{r_2 + q} \in [1,q]$.
Thus, the first condition in \eqref{40111} is trivially satisfied.

The pair $(\ell_3,r_3)$ satisfies either Condition \ref{cond0625_1} or Condition \ref{cond0625_2} below.

\begin{condition}
							\label{cond0625_1}
The pair $(\ell_3,r_3)$ with $\ell_3 \in [1,\infty]$ and $r_3 \in [1,\infty)$ satisfies
$$
\frac{d}{\ell_3} + \frac{2}{r_3} \leq 1 + \min\left\{\frac{d}{p},1\right\},
$$
$$
0 \leq \frac{1}{\ell_3} \leq 1 + \frac{1}{d} - \frac{1}{p}, \quad 0 < \frac{1}{r_3} \leq \min\left\{\frac{1}{2}, 1-\frac{1}{q} \right\},
$$
$$
0 < \frac{1}{r_3} < \frac{1}{2} \quad \textrm{if} \quad q = 2.
$$
If $0 < 1/r_3 = 1-1/q$ and $q \in (1,2)$,
$$
\frac{d}{\ell_3} + \frac{2}{r_3} < 1 + \min\left\{\frac{d}{p},1\right\}, \quad 0 \leq \frac{1}{\ell_3} \leq 1 + \frac{1}{d} - \frac{1}{p}.
$$
If $p = d \geq 2$, the pair $(\ell_3, r_3)$ with $\ell_3 \in [1,\infty]$ and $r_3 \in [1,\infty)$ satisfies
\begin{equation}
							\label{eq0626_01}
\frac{d}{\ell_3} + \frac{2}{r_3} < 2, \quad 0 \leq \frac{1}{\ell_3} < 1, \quad 0 < \frac{1}{r_3} \leq \min\left\{\frac{1}{2}, 1 - \frac{1}{q}\right\}.
\end{equation}
\end{condition}

\begin{condition}
							\label{cond0625_2}
The pair $(\ell_3,r_3)$ with $\ell_3 \in [1,\infty]$ and $r_3 \in [1,\infty)$ satisfies
\begin{equation}
							\label{eq0625_16}
\frac{d}{\ell_3} + \frac{2}{r_3} -1 \leq \min\left\{1,\frac{d}{p} + \frac{2}{q}, d+2-\left(\frac{d}{p} + \frac{2}{q}\right)\right\} =: \Phi(d,p,q),
\end{equation}
\begin{equation}
							\label{eq0625_18}
0 \leq \frac{1}{\ell_3} \leq \frac{1}{d} + \frac{1}{p}, \quad 0 < \frac{1}{r_3} \leq \frac{1}{2} + \frac{1}{q}.
\end{equation}
If $p = d/(d-1)< \infty$ or $r_3 = 1$, the pair $(\ell_3,r_3)$ with $\ell_3 \in [1,\infty]$ and $r_3 \in [1,\infty)$ satisfies
\begin{equation}
							\label{eq0625_04}
\frac{d}{\ell_3} + \frac{2}{r_3} -1 < \Phi(d,p,q), \quad 0 \leq \frac{1}{\ell_3} < \frac{1}{d} + \frac{1}{p}, \quad 0 < \frac{1}{r_3} < \frac{1}{2} + \frac{1}{q}.
\end{equation}
\end{condition}

\begin{remark}
In  the algebraic conditions \ref{condi0302_1}, we assume that $r_1, r_2, r_3 < \infty$.
However, when $d \geq 2$, one can consider the cases $r_1, r_2, r_3 = \infty$ in \ref{condi0302_1} if $\|a^i\|_{L_{\ell_1, r_1}(\Omega_T)}$, $\|b^i\|_{L_{\ell_2, r_2}(\Omega_T)}$ and $\|c\|_{L_{\ell_3, r_3}(\Omega_T)}$ are sufficiently small. See \cite[p.141, Remark 2.1]{MR0241822}.
\end{remark}

\begin{remark}
\label{rmk206}
In the case $p=q=2$, take $\kappa \in [1, \infty)$ if $d \ge 2$ and $\kappa \in [1, 2]$ if $d = 1$. Suppose $(\ell_3, r_3)$ satisfies Condition \ref{cond0625_2}. Then by setting
$$
\ell_1 = \ell_2 = 2\ell_3 = \frac{d\kappa}{\kappa - 1}, \quad r_1 = r_2 = 2r_3 = 2\kappa,
$$
we recover the conditions on the coefficients in \cite[Chapter III]{MR0241822} for the $L_2$-estimates.
To be precise, our conditions on $(\ell_i,r_i)$, $i=1,2,3$, when $p=q=2$ do not include the borderline cases with $\kappa = 1$.
However, these cases, that is,
$(\ell_i,r_i)=(\infty,2)$, $i=1,2$, and $(\ell_3,r_3) = (\infty,1)$ can also be covered by the estimates of equations in $V_{2,2}$-norm.
See Proposition \ref{190405@lem2} and \ref{lem0603_1}.
Regrading the condition on $f$ (see \eqref{40111}--\eqref{4011}) when $p=q=2$, we also recover the corresponding condition in \cite[Chapter III]{MR0241822} except the borderline case
$$
\frac{d}{p_1} + \frac{2}{q_1} = 2 + \frac{d}{2}, \quad q_1 = 1,
$$
which can also be included by the estimates in Propositions \ref{190405@lem2} and \ref{lem0603_1}.
\end{remark}

\subsection{The Dirichlet boundary value problem}

We first introduce an assumption for the unique solvability of equations without lower-order terms having the Dirichlet boundary condition.
Note that the summability of $f$ on the right-hand side of \eqref{eq0415_02} is not lower than that of $u$.
A few remarks are in order about the assumption.

\begin{assumption}[$p,q$]  \label{aomega} 
For $g=(g_1,\ldots,g_d)\in L_{p,q}(\Omega_T)^d$ and $f\in L_{p,q}(\Omega_T)$, 
there exists a unique solution $u\in \mathring{\mch}^1_{p,q}(\Omega_T)$ of 
\begin{equation}
							\label{eq0415_02}
 -u_t +D_i \left( a^{ij}(t,x) D_j u  \right)  = D_i g_i + f  \quad \textrm{in} \quad \Omega_T
\end{equation}
with $u(0, \cdot) = 0$ on $\Omega$.
Moreover, we have
$$
 \| u \|_{ W^{1,0}_{p,q} (\Omega_T) } 
\le K \left(   \| g\|_{ L_{p,q} (\Omega_T)  } +  \| f \|_{ L_{p,q} (\Omega_T) } \right),
$$
where $K > 0$ is independent of $g, f$, and $u$.
The same statement holds true when the pair $(p,q)$ is replaced by $(p',q')$, where $1/p + 1/p'=1/q+1/q'=1$ and the coefficients $a^{ij}$ are replaced by $a^{ji}$.
\end{assumption}

By solutions to \eqref{eq0415_02} with the initial zero condition, we mean so-called weak solutions satisfying an integral identity. 
See \eqref{eq0310_01} below.

\begin{remark}
As is well known, if $p=q=2$, Assumption \ref{aomega} is satisfied with the ellipticity condition \eqref{eq0402_02}.
For $p, q \in (1,\infty)$ with $(p,q) \neq (2,2)$, the $L_{p,q}$ solvability in Assumption \ref{aomega} depends on 
the smoothness of the main coefficients $a^{ij}(t,x)$ and the domain $\Omega$. For instance, Assumption \ref{aomega} is satisfied with an appropriate condition on $\Omega$ if $a^{ij}(t,x)$ are uniformly continuous in $x \in \mr^d$ or $a^{ij}(t,x)$ are in the class of vanishing mean oscillations (VMO) as functions of $x \in \mr^d$.
The regularity on $a^{ij}(t,x)$ can be further relaxed so that they can have sufficiently small mean oscillations or have no regularity conditions with respect to one spatial variable.
As to assumptions on $\Omega$, one can obtain the solvability in Assumption \ref{aomega} even when $\Omega$ is beyond the class of Lipschitz domains.
In fact, there are tremendous results about the unique solvability as in Assumption \ref{aomega} when the coefficients $a^{ij}$ and the boundaries are irregular.
Among those results, one can find related results in  \cite{MR1331981, MR2435520, MR3488249, MR2680179, MR3073000, MR2835999, MR2764911, MR3266252} and references therein.
In the same spirit as our assumption on the domain, instead of specifying a regularity assumption on $a^{ij}$ allowing the unique solvability in Assumption \ref{aomega} we show that one can have the unique solvability of equations with unbounded lower-order coefficients, provided that the leading coefficients $a^{ij}$ of the equations are regular enough to guarantee the solvability with the estimate in Assumption \ref{aomega}.
\end{remark}

\begin{remark}
One may obtain an a priori estimate as in Assumption \ref{aomega} for $u$ satisfying \eqref{eq0415_02} with $(p',q')$ via the duality argument whenever the solvability together with the estimate for solutions to \eqref{eq0415_02} is established for $(p,q)$.
However, to obtain the solvability result for $(p',q')$ using the a priori estimate and the method of continuity, one needs the solvability of, for instance, the heat equation.
Thus, if one can assure the solvability of the heat equation for $(p',q')$, it is enough to have the solvability and the estimate only for $(p,q)$ in Assumption \ref{aomega}.
However, since the main purpose of this paper is not to investigate whether or not the heat equation is solvable in $\mch^1_{p,q}(\Omega_T)$ or $\mch^1_{p',q'}(\Omega_T)$, 
we take the solvability of the equation \eqref{eq0415_02} for both $(p,q)$ and $(p',q')$ as a part of our assumptions.
\end{remark}

The following theorem is the main result when the equation has the Dirichlet boundary condition.

\begin{theorem}
\label{theo401}
Let $\Omega$ satisfy Assumption \ref{dom} $(\Omega)$.
Also let $1 < p,q<\infty$, $g=(g_1,\ldots,g_d)\in L_{p,q}(\Omega_T)^d$, and $f\in L_{p_1,q_1}(\Omega_T)$, where $(p_1,q_1)$ satisfies \eqref{40111}--\eqref{4011}.
Assume that the lower-order coefficients $a^i$, $b^i$, and $c$ satisfy \eqref{eq0402_03}.
Then, under Assumption \ref{aomega} $(p,q)$, 
there exists a unique $u\in \mathring{W}^{1,0}_{p,q}(\Omega_T)$ satisfying 
\begin{equation}		\label{402}
\mathcal{P}u  =D_i g_i +f \quad \text{in} \quad \Omega_T
\end{equation}
with $u(0, \cdot) = 0$ on $\Omega$ and 
\begin{equation}		
\label{403}
\|u\|_{W_{p, q}^{1,0}(\Omega_T)}  \le N \left(  \|g\|_{L_{p,q}(\Omega_T)}+   \|f\|_{L_{p_1,q_1}(\Omega_T)} \right),
\end{equation}
where $N=N(d, \delta, p,q,p_1,q_1, \ell_1, r_1, \ell_2,r_2,\ell_3,r_3, a^i, b^i, c, K, C, T)$.
In particular, $C$ represents a set of constants each of which is from Assumption \ref{dom} when $(p,q, p_0,q_0, p_k, q_k)$ is replaced with a set of numbers determined by $d$, $p$, $q$, $p_1$, $q_1$, $\ell_1$, $r_1$, $\ell_2$, $r_2$, $\ell_3$ and $r_3$.
\end{theorem}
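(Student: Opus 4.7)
The plan is to treat \eqref{402} as a lower-order-free parabolic equation with an augmented right-hand side, invoke the forthcoming key Lemma \ref{190405@lem1} (which upgrades Assumption \ref{aomega}$(p,q)$ to accommodate an inhomogeneity $D_ig_i+f$ with $f\in L_{p_1,q_1}$ satisfying \eqref{40111}--\eqref{4011}), and then close the estimate via H\"older's inequality combined with the embedding \eqref{imb} from Assumption \ref{dom}.

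\emph{A priori estimate.} Suppose $u\in\mathring{W}^{1,0}_{p,q}(\Omega_T)$ satisfies \eqref{402} and rewrite
$$
-u_t+D_i\bigl(a^{ij}D_j u\bigr)=D_i(g_i-a^iu)+\bigl(f-b^iD_iu-cu\bigr)\quad\text{in }\Omega_T.
$$
Picking auxiliary admissible pairs $(\hat p,\hat q)$, $(\tilde p,\tilde q)$ meeting \eqref{40111}--\eqref{4011} into which $b^iD_iu$ and $cu$ can be placed, Lemma \ref{190405@lem1} yields
$$
\|u\|_{W^{1,0}_{p,q}(\Omega_T)}\le N\bigl(\|g\|_{L_{p,q}}+\|a^iu\|_{L_{p,q}}+\|f\|_{L_{p_1,q_1}}+\|b^iD_iu\|_{L_{\hat p,\hat q}}+\|cu\|_{L_{\tilde p,\tilde q}}\bigr).
$$
By H\"older's inequality, the three coefficient products are dominated by $\|a^i\|_{L_{\ell_1,r_1}}$, $\|b^i\|_{L_{\ell_2,r_2}}$, $\|c\|_{L_{\ell_3,r_3}}$ times appropriate mixed Lebesgue norms of $u$ or $Du$. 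The algebraic conditions in Section \ref{condi0302_1} are designed precisely so that these pairs fall within the reach of \eqref{imb}: regarding $u$ as a solution of \eqref{402} allows $u_t$ to be distributionally decomposed as $D_ig_i'+\sum_k f_k'$ with $g_i'\in L_{p,q}$ and each $f_k'$ in an admissible $L_{p_k,q_k}$, whence Assumption \ref{dom} produces bounds of the form $\|u\|_{L_{p_0,q_0}(\Omega_T)}\le C(\|u\|_{W^{1,0}_{p,q}}+\|g\|_{L_{p,q}}+\|f\|_{L_{p_1,q_1}})$ for each $(p_0,q_0)$ that will be needed.

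\emph{Absorption, existence, uniqueness.} Combining the two previous steps produces, after relabelling,
$$
\|u\|_{W^{1,0}_{p,q}(\Omega_T)}\le N\bigl(\|g\|_{L_{p,q}}+\|f\|_{L_{p_1,q_1}}\bigr)+\eta(I)\,\|u\|_{W^{1,0}_{p,q}(\Omega_T)},
$$
where $\eta(I)$ is governed by $\sum_k\|\text{coefficient}\|_{L_{\ell_k,r_k}(\Omega\times I)}$ on a time window $I\subset(0,T)$. Since every $r_k<\infty$, absolute continuity of the integrals lets us partition $(0,T)$ into finitely many sub-intervals on each of which $\eta(I)\le 1/2$; absorbing the last term on the left and iterating interval by interval delivers \eqref{403}. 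Existence is then obtained by truncating the lower-order coefficients (e.g.\ $a^i_n=a^i\chi_{|a^i|\le n}$, and similarly for $b^i,c$), solving each bounded-coefficient problem by a standard perturbation of Assumption \ref{aomega}$(p,q)$ via the method of continuity, and passing to the limit using the uniform a priori bound. Uniqueness is immediate from the linearity of $\mathcal{P}$ and the a priori estimate applied to the difference of two solutions.

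\emph{Main obstacle.} The delicate point is the H\"older/embedding step: because the conditions in Section \ref{condi0302_1} and Conditions \ref{cond0625_1}--\ref{cond0625_2} are sharp, in several regimes the exponents produced by H\"older hit the equality boundary of \eqref{eq0602_01} or \eqref{401}. Matching the strict-inequality exclusions \eqref{eq0602_02}, \eqref{eq0603_03}, and the restrictions in \eqref{eq0626_01}, \eqref{eq0625_04} forces a carefully tailored choice of the auxiliary pairs $(\hat p,\hat q)$, $(\tilde p,\tilde q)$; the borderline cases where $d/\ell_3+2/r_3-1=\Phi(d,p,q)$ and the extreme configurations $r_3=1$ or $p=d$ will have to be handled by shifting exponents slightly into the interior before running the absorption argument.
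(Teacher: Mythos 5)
Your overall strategy coincides with the paper's: move the lower-order terms to the right-hand side, apply the duality-based Lemma \ref{190405@lem1} for the operator without lower-order terms, control $a^iu$, $b^iD_iu$, $cu$ by H\"older plus the embedding of Assumption \ref{dom}, gain smallness by splitting $(0,T)$ in time, and then absorb. However, two steps as you state them would not go through. First, the embedding bound you invoke, $\|u\|_{L_{p_0,q_0}}\le C(\|u\|_{W^{1,0}_{p,q}}+\|g\|_{L_{p,q}}+\|f\|_{L_{p_1,q_1}})$, is not what Assumption \ref{dom} gives for a solution of \eqref{402}: the admissible decomposition of $u_t$ is $u_t=D_i(a^{ij}D_ju+a^iu-g_i)+(b^iD_iu+cu-f)$, so the right-hand side of \eqref{imb} necessarily contains $\|a^iu\|_{L_{p,q}}$ and $\|cu\|_{L_{p_3,q_3}}$, i.e.\ exactly the quantities you are trying to estimate. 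The paper resolves this circularity by carrying out the absorption in the norm of the space $\mathfrak{H}(\Omega_T)$ (functions in $W^{1,0}_{p,q}$ whose time derivative admits a decomposition with components in $L_{p,q}$, $L_{p_k,q_k}$), for which \eqref{imb} yields $\|u\|_{L_{p_0,q_0}}\le C\|u\|_{\mathfrak{H}(\Omega_T)}$; your displayed absorption inequality in the $W^{1,0}_{p,q}$ norm alone is not justified as written. Second, ``iterating interval by interval'' is not immediate: on $[t_1,t_2]$ the solution no longer vanishes at the left endpoint, while Lemma \ref{190405@lem1} and Assumption \ref{aomega} require zero initial data. The paper handles this by reflecting $u$ across $t_1$ to build $\bar u$, estimating $v=u-\bar u$ (which does vanish at $t_1$) for an equation with odd-extended coefficients and data, and then recombining; your proposal contains no substitute for this step.

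Beyond these, the part you label the ``main obstacle''---the existence of admissible auxiliary pairs $(\hat p,\hat q)$, $(\tilde p,\tilde q)$ and of the corresponding $(p_0,q_0)$ satisfying Assumption \ref{dom}---is in fact the technical core of the paper's proof: it fixes $(p_2,q_2)=(\ell_2p/(\ell_2+p),\,r_2q/(r_2+q))$ and constructs $(p_3,q_3)$ via parameters $\alpha,\beta$ chosen through a detailed case analysis (Condition \ref{cond0625_1} versus \ref{cond0625_2}, and the exceptional cases $p=d$, $p=d/(d-1)$, $r_3=1$), verifying \eqref{40111}--\eqref{4011} and \eqref{eq0602_01}--\eqref{eq0603_03} in each regime. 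Deferring this verification, and proposing to ``shift exponents slightly into the interior'' in borderline cases, leaves the decisive part of the argument undone; the hypotheses are sharp precisely so that such shifts are not always available, and the work consists in showing that admissible choices nonetheless exist under Conditions \ref{cond0625_1}--\ref{cond0625_2}. Your existence argument by truncating the coefficients and passing to the limit is a plausible alternative to the paper's method of continuity in $\lambda$ (the a priori bound is indeed uniform under truncation), but it too rests on the unproven a priori estimate, so as it stands the proposal has the right architecture while missing the pieces that make the estimate valid.
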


We note that, throughout the paper, $p'$, $q'$, $p_1'$, and $q_1'$ stand for the H{$\ddot{\textrm{o}}$}lder conjugate exponents
of $p$, $q$, $p_1$, and $q_1$, respectively.

\subsubsection{Notions for solutions with the zero initial condition.}
\label{solution}
Let $1 < p, q < \infty$. A solution $u \in \mathring{W}_{p,q}^{1,0}(\Omega_T)$($\mathring{\mathcal{H}}_{p,q}^1(\Omega_T)$) satisfies \eqref{402} with $u(0, \cdot) = 0$ on $\Omega$ if
\begin{multline}
	\label{eq0310_01}
\int_0^T \int_{\Omega} \left( u \varphi_t  -a^{ij} D_ju D_i \varphi -a^i u D_i\varphi +b^i D_iu \, \varphi + c u \varphi   \right) dx\,dt
\\
= \int_0^T \int_{\Omega} \left( - g_i D_i \varphi + f\varphi \right) dx\,dt
\end{multline}
for all smooth test functions $\varphi$ defined on the closure of $\Omega_T$ and vanishing in a neighborhood of the lateral boundary and the upper base of $\Omega_T$. 

We note that if $u \in \mathring{W}_{p, q}^{1, 0}(\Omega_T)$ is a solution to \eqref{402} with $u(0, \cdot) = 0$ on $\Omega$, 
then $v(t, x) := u(t,x)\chi_{t \ge 0}$ is in $\mathring{W}_{p, q}^{1, 0}((-\infty, T) \times \Omega)$ and $\|v\|_{W_{p, q}^{1, 0}((-\infty, T)\times \Omega)}$ is equal to $\|u\|_{W_{p, q}^{1, 0}(\Omega_T)}$.
The same statement holds for solutions $u$ in $\mathring{\mathcal{H}}_{p, q}^1(\Omega_T)$ with $u(0, \cdot) = 0$ on $\Omega$.

\begin{remark} 
Theorem \ref{theo401} still holds with  $\sum_{k=1}^{n_0}f_k$ in place of $f$ in \eqref{402} if $f_k \in L_{p_k, q_k}(\Omega_T)$, $k=1,\ldots,n_0$, where each $(p_k, q_k)$ satisfies \eqref{40111}--\eqref{4011} with $(p_k, q_k)$ in place of $(p_1,q_1)$. In this case the constant $N$ additionally depends on $p_k$, $q_k$, and $n_0$.
One can just repeat the proof of Theorem \ref{theo401} with $\sum_{k=1}^{n_0} f_k$ instead of a single $f$.
\end{remark}

\begin{remark}
\label{counter2}
In the case that $1<p<2$ or $1<q<2$, if $u \in W_{p,q}^{1,0}(\Omega_T)$ is a solution of \eqref{402}, it is not likely that $u$ belongs to $V_{p,q}(\Omega_T)$.
Indeed, in the case $1<p=q<2$, the problem \eqref{402} is not necessarily solvable in the space $\mathring{V}_{p,p}(\Omega_T)$
even if $\mathcal{P}$ is the heat operator and the boundary of $\Omega$ is sufficiently smooth.
See \cite{MR2433518} for a counterexample.
On the other hand, as shown in Section \ref{Vspace}, if $p,q \in [2,\infty)$ and $u \in W_{p,q}^{1,0}(\Omega_T)$ is a solution of \eqref{402}, then $u \in V_{p,q}(\Omega_T)$.
However, even in this case $u$ does not necessarily belong to $\mathcal{H}_{p,q}^1(\Omega_T)$.
Indeed, let $f(t,x) = t^{-\alpha} \xi(x)$, where $1/q < \alpha < 1/q_1$ and $\xi \in C_0^\infty(\Omega)$.
Then $f \in L_{p_1,q_1}(\Omega_T)$, and by Proposition \ref{190405@lem2} there exists a unique solution $u \in V_{p,q}(\Omega_T)$ of \eqref{402} with $a^i=b^i=c=g_i=0$.
We see that
$$
u_t = D_i(a^{ij} D_j u) - f,
$$
where $a^{ij} D_j u \in L_{p,q}(\Omega_T)$, but $f \notin L_{p,q}(\Omega_T)$ by the choice of $\alpha$.
This shows that $u_t \notin \mathbb{H}_{p,q}^{-1}(\Omega_T)$.
In fact, one can show that there are no $g_i, h \in L_{p,q}(\Omega_T)$ satisfying $
u_t = D_i g_i + h$ in $\Omega_T$.
Hence, $u \notin \mathcal{H}_{p,q}^1(\Omega_T)$.
\end{remark}

\subsection{The conormal derivative boundary value problem}

Here is our assumption for the solvability of equations with the conormal derivative boundary condition.
 
\begin{assumption}[$p,q$]  \label{comega} 
For $g=(g_1,\ldots,g_d)\in L_{p,q}(\Omega_T)^d$ and $f\in L_{p,q}(\Omega_T)$, 
there exists a unique solution $u\in \mch^1_{p,q}(\Omega_T)$ of 
\begin{equation}
							\label{eq0515}
 \left\{
\begin{array}{lll}
\displaystyle -u_t + D_i\left( a^{ij}(t,x) D_j u  \right) = D_i g_i +f \quad &\displaystyle\textrm{in} \quad \Omega_T,
\\
\displaystyle \nu^ia^{ij}D_ju = \nu^ig_i \quad &\displaystyle\textrm{on} \quad (0, T) \times \partial\Omega. 
\end{array}
\right.
\end{equation}
with $u(0, \cdot) = 0$ on $\Omega$ and $\nu = (\nu^1, \ldots , \nu^d)$ is the outward normal vector on $\partial\Omega$. Moreover, we have 
$$
 \| u \|_{ W^{1,0}_{p,q} (\Omega_T) } \le K \left(   \| g\|_{ L_{p,q} (\Omega_T)  } +  \| f \|_{ L_{p,q} (\Omega_T) } \right),
$$
where $K > 0$ is independent of $g, f$, and $u$.
The same statement holds true when the pair $(p,q)$ is replaced by $(p',q')$, where $1/p + 1/p'=1/q+1/q'=1$ and the coefficients $a^{ij}$ are replaced by $a^{ji}$. 
\end{assumption}

The following theorem is the main result when the equation has the conormal derivative boundary condition.

\begin{theorem}
\label{conormal}
Under the same assumptions as in Theorem \ref{theo401} with Assumption \ref{aomega} ($p,q$) replaced with Assumption \ref{comega} ($p,q$), there exists a unique $u\in W_{p,q}^{1,0}(\Omega_T)$ satisfying
\begin{equation}		\label{602}
\left\{
\begin{array}{lll}
\displaystyle \mathcal{P}u = D_i g_i +f \quad &\displaystyle\textrm{in} \quad \Omega_T,
\\
\displaystyle \nu^ia^{ij}D_ju + \nu^ia^iu = \nu^ig_i \quad &\displaystyle\textrm{on} \quad (0, T) \times \partial\Omega. 
\end{array}
\right.
\end{equation}
with $u(0, \cdot) = 0$ on $\Omega$, where $\nu = (\nu^1, \ldots , \nu^d)$ is the outward normal vector on $\partial\Omega$. Moreover, we have
\begin{equation*}		
\|u\|_{W_{p, q}^{1, 0}(\Omega_T)}  \le N \left(  \|g\|_{L_{p,q}(\Omega_T)}+   \|f\|_{L_{p_1,q_1}(\Omega_T)} \right),
\end{equation*}
where $N=N(d, \delta, p, q, p_1, q_1, \ell_1, r_1, \ell_2,r_2,\ell_3,r_3, a^i, b^i, c, K, C, T)$.
Again, $C$ represents a set of constants each of which is from Assumption \ref{dom} when $(p,q,p_0,q_0, p_k, q_k)$ is replaced with a set of numbers determined by $d$, $p$, $q$, $p_1$, $q_1$, $\ell_1$, $r_1$, $\ell_2$, $r_2$, $\ell_3$ and $r_3$.
\end{theorem}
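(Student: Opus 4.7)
The plan is to transplant the proof of Theorem \ref{theo401} to the conormal setting, with Assumption \ref{aomega} replaced by Assumption \ref{comega} and with the class of admissible test functions enlarged so that they need only vanish near $\{T\}\times\Omega$ instead of on the entire parabolic boundary except $\{0\}\times\Omega$. The first step is to establish the conormal analog of Lemma \ref{190405@lem1}: for $g\in L_{p,q}(\Omega_T)^d$ and $f\in L_{p_1,q_1}(\Omega_T)$ with $(p_1,q_1)$ satisfying \eqref{40111}--\eqref{4011}, the lower-order-free conormal problem
\begin{equation*}
-u_t+D_i(a^{ij}D_ju)=D_ig_i+f \text{ in } \Omega_T,\qquad \nu^ia^{ij}D_ju=\nu^ig_i \text{ on } (0,T)\times\partial\Omega,
\end{equation*}
with $u(0,\cdot)=0$ admits a unique solution $u\in W^{1,0}_{p,q}(\Omega_T)$ obeying $\|u\|_{W^{1,0}_{p,q}(\Omega_T)}\le N(\|g\|_{L_{p,q}(\Omega_T)}+\|f\|_{L_{p_1,q_1}(\Omega_T)})$.

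The estimate is obtained by duality. Given a smooth $\psi$ and a coordinate direction $k$, let $w$ solve the time-reversed adjoint conormal problem with leading coefficients $a^{ji}$, interior source $D_k\psi$, and conormal datum $\nu^k\psi$ on $(0,T)\times\partial\Omega$; Assumption \ref{comega}($p',q'$) furnishes $w\in W^{1,0}_{p',q'}(\Omega_T)$ with $\|w\|_{W^{1,0}_{p',q'}(\Omega_T)}\le K\|\psi\|_{L_{p',q'}(\Omega_T)}$. Inserting $w$ as a test function in the weak formulation of the equation for $u$, integration by parts in $x$ and the matching of conormal data produce the identity
\begin{equation*}
\int_0^T\!\!\int_\Omega D_k u\,\psi\,dx\,dt=\int_0^T\!\!\int_\Omega g_i D_i w\,dx\,dt-\int_0^T\!\!\int_\Omega f w\,dx\,dt,
\end{equation*}
and the corresponding identity with $D_k u$ replaced by $u$ results from choosing adjoint source $\psi$ with zero conormal datum. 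H\"older's inequality bounds the right-hand side by $\|g\|_{L_{p,q}}\|Dw\|_{L_{p',q'}}+\|f\|_{L_{p_1,q_1}}\|w\|_{L_{p_1',q_1'}}$; the first factor is controlled by Assumption \ref{comega}($p',q'$), and the second by applying the anisotropic embedding \eqref{imb} of Assumption \ref{dom}($\Omega$) to $w$ with $(p_0,q_0)=(p_1',q_1')$ and $(p_k,q_k)=(p',q')$, the algebraic conditions for which reduce to \eqref{401}--\eqref{4011} via H\"older duality. Taking the supremum over $\psi\in L_{p',q'}$ yields the full $W^{1,0}_{p,q}$-estimate for $u$, and a standard approximation of $f\in L_{p_1,q_1}$ by functions in $L_{p,q}$ combined with this a priori bound produces existence.

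With the lower-order-free estimate in hand, the full equation is rewritten as
\begin{equation*}
-u_t+D_i(a^{ij}D_ju)=D_i(g_i-a^iu)+(f-b^iD_iu-cu)\ \text{in}\ \Omega_T,
\end{equation*}
with conormal datum $\nu^i(g_i-a^iu)$, and the lemma is applied with $\tilde g_i:=g_i-a^iu$ and $\tilde f:=f-b^iD_iu-cu$. H\"older's inequality in the spaces $L_{\ell_k,r_k}$ of \eqref{eq0402_03}, together with the embeddings of Assumption \ref{dom} under Conditions \ref{cond0625_1}--\ref{cond0625_2}, gives
\begin{equation*}
\|u\|_{W^{1,0}_{p,q}(\Omega_T)}\le N\bigl(\|g\|_{L_{p,q}(\Omega_T)}+\|f\|_{L_{p_1,q_1}(\Omega_T)}\bigr)+N\Psi(T)\,\|u\|_{W^{1,0}_{p,q}(\Omega_T)},
\end{equation*}
where $\Psi(T)\to 0$ as $T\to 0$ by absolute continuity of the coefficient norms (the strict inequalities in \eqref{eq0625_04} and Condition \ref{cond0625_1} leave slack to be exploited). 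Absorbing the last term yields the a priori bound and solvability on a short initial slab, and iteration in $t$ covers $(0,T)$; uniqueness follows from the same estimate applied to the difference of two solutions. The principal subtlety relative to the Dirichlet case is boundary bookkeeping: because test functions are not zero on $(0,T)\times\partial\Omega$, the surface contribution produced by integrating $D_i(a^iu)$ against a test function is precisely what reconstitutes the conormal term $\nu^ia^iu$ appearing in \eqref{602}, and one must verify that the conormal datum assigned to the adjoint $w$ in the duality step is compatible with this bookkeeping. Once this is arranged, the remaining ingredients --- mixed-norm H\"older estimates, the embedding \eqref{imb}, smallness of coefficient norms on short time intervals, and iteration in $t$ --- are insensitive to the type of lateral boundary condition and parallel the Dirichlet proof verbatim.
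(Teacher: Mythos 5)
Your proposal is correct and follows essentially the same route as the paper: the paper's proof of Theorem \ref{conormal} simply observes that the duality argument of Lemma \ref{190405@lem1} carries over to the conormal problem (with Assumption \ref{comega} $(p',q')$ supplying the adjoint solution and Assumption \ref{dom} giving the mixed-norm bound on it), after which the treatment of the lower-order terms, the absorption via smallness of the coefficient norms on short time intervals, and the iteration in $t$ are repeated verbatim from Theorem \ref{theo401}. Your more detailed bookkeeping of the adjoint conormal data and the duality identities is a faithful expansion of exactly that argument.
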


Here, we note that solutions $u \in W_{p, q}^{1,0}(\Omega_T)$ ($\mathcal{H}_{p, q}^1(\Omega_T)$) of \eqref{eq0515} and \eqref{602} with $u(0, \cdot) = 0$ on $\Omega$, 
are understood in an analogous way to the Dirichlet boundary value problem.
More precisely, we define solutions to the conormal boundary value problem using \eqref{eq0310_01} with test functions having (not necessarily) non-zero values on the lateral boundary of the parabolic domain.

\section{Proof of Theorem \ref{theo401} and \ref{conormal}}
\label{dirichlet}

In this section we prove Theorem \ref{theo401} and \ref{conormal}, our main result for the Dirichlet boundary condition case and the conormal derivative boundary condition case, respectively.

In the sequel we denote $\Omega_{T_1, T_2} = (T_1, T_2) \times \Omega$, where $-\infty < T_1 <T_2 < \infty$. Thus, $\Omega_T = \Omega_{0,T}$.
Also, as mentioned in Assumption \ref{dom}, we may denote by $C_k$ the constant from Assumption \ref{dom} ($\Omega$) with $(p,q, p_0,q_0, p_k,q_k)$ or $(p', q', p_0',q_0', p'_k, q'_k)$.

\begin{lemma}		
\label{190405@lem1}
Let $p,q\in (1, \infty)$ and $m$ be a positive integer and $0 \le t_1 < t_2 \le T$.
Also let $(p_k, q_k)$, $k=1,\ldots,m$, satisfy \eqref{40111}--\eqref{4011} in place of $(p_1,q_1)$.
Suppose that Assumption \ref{dom} ($\Omega$) and Assumption \ref{aomega} $(p,q)$ hold. Then, for $u\in \mathring{W}^{1,0}_{p,q}(\Omega_{t_1, t_2})$ with $u(t_1, \cdot) = 0$ on $\Omega$ satisfying
\begin{equation}		\label{190405@eq1}
-u_t +D_i ( a^{ij}(t,x) D_j u)   = D_i g_i + \sum_{k=1}^m f_k \qquad  \textrm{in } \quad   \Omega_{t_1, t_2},
\end{equation}
where $g=(g_1,\ldots,g_d) \in L_{p,q}(\Omega_{t_1, t_2})^d$ and $f_k \in L_{p_k,q_k}(\Omega_{t_1, t_2})$, we have  
\begin{equation*}
\| u \|_{W_{p, q}^{1, 0}(\Omega_{t_1, t_2}) }   \leq N\left( \| g\|_{ L_{p,q} (\Omega_{t_1, t_2}) }  +  \sum_{k=1}^{m}   \| f_k \|_{ L_{p_k,q_k} (\Omega_{t_1, t_2}) }\right), 
\end{equation*}
where $N=N(d,\delta, m, K, C_k)$, but independent of $t_1$ or $t_2$.
\end{lemma}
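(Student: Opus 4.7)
The plan is a duality argument combining Assumption \ref{aomega}$(p',q')$ with the embedding in Assumption \ref{dom}$(\Omega)$. By the $L_{p,q}$--$L_{p',q'}$ duality of mixed-norm Lebesgue spaces, the proof reduces to bounding
\[
I_1(\varphi):=\int_{\Omega_{t_1,t_2}} u\,\varphi\,dx\,dt,\qquad
I_2(\psi):=\int_{\Omega_{t_1,t_2}} D_j u\,\psi_j\,dx\,dt
\]
uniformly in smooth compactly supported $\varphi$ and $\psi=(\psi_j)$ with unit $L_{p',q'}$-norm, by a constant multiple of $\|g\|_{L_{p,q}}+\sum_k\|f_k\|_{L_{p_k,q_k}}$.

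For each such $\varphi$ I will construct an adjoint test function $v\in\mathring{\mch}^1_{p',q'}(\Omega_{t_1,t_2})$ with $v(t_2,\cdot)=0$ solving $v_t+D_i(a^{ji}(t,x)D_j v)=\varphi$ and $\|v\|_{W^{1,0}_{p',q'}}\le K\|\varphi\|_{L_{p',q'}}$. This I do by time-reversal: applying Assumption \ref{aomega}$(p',q')$ to the coefficients $\hat a^{ij}(t,x):=a^{ji}(t_1+t_2-t,x)$ (which lie in the same solvability class as $a^{ji}$) and to the datum $\hat\varphi(t,x):=\varphi(t_1+t_2-t,x)$ produces a solution $\hat v$ with $\hat v(t_1,\cdot)=0$, and then $v(t,x):=\hat v(t_1+t_2-t,x)$ has the required properties. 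An analogous $v$ is built for the $\psi$-problem with datum $D_j\psi_j$ in place of $\varphi$.

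Next I substitute $v$ (legitimized by density) into the weak formulation \eqref{eq0310_01} of \eqref{190405@eq1}. The time-boundary contribution vanishes thanks to $u(t_1,\cdot)=0$ and $v(t_2,\cdot)=0$; the lateral-boundary contribution vanishes by membership in the $\mathring{\,\cdot\,}$-closures; and the identity $a^{ij}D_j u\,D_i v=a^{ji}D_i u\,D_j v$ together with the substitution $v_t=\varphi-D_i(a^{ji}D_j v)$ produces a cancellation of the leading bilinear terms, leaving
\[
I_1(\varphi)=-\int_{\Omega_{t_1,t_2}} g_i D_i v\,dx\,dt+\sum_{k=1}^m\int_{\Omega_{t_1,t_2}} f_k v\,dx\,dt,
\]
and analogously $I_2(\psi)=\int g_i D_i v-\sum\int f_k v$ for the $\psi$-built $v$. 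H\"older immediately bounds $|\int g_i D_i v|$ by $K\|g\|_{L_{p,q}}\|\varphi\|_{L_{p',q'}}$ and $|\int f_k v|$ by $\|f_k\|_{L_{p_k,q_k}}\|v\|_{L_{p_k',q_k'}}$. The last factor is controlled by invoking Assumption \ref{dom}$(\Omega)$ on $v$: since $v_t=D_i(-a^{ji}D_j v)+\varphi$ with both pieces in $L_{p',q'}$, the embedding delivers $\|v\|_{L_{p_k',q_k'}}\le C_k(K+1)\|\varphi\|_{L_{p',q'}}$.

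The main obstacle, and really the only substantive check, is verifying that $(p_k',q_k')$ lies in the admissible target range of Assumption \ref{dom}$(\Omega)$ with base $(p',q')$. H\"older conjugation transforms the integrability inequality \eqref{401} exactly into \eqref{eq0602_01}; the borderline strict inequality \eqref{4011} ($q_k>1$ at equality) becomes $q_k'<\infty$, matching \eqref{eq0602_02}; and when $q_k=q$ (case (i) of the assumption) the exclusion in \eqref{40111} aligns with the exclusion $(p',p_k')\neq(d,\infty)$ in \eqref{eq0602_03}. Once this index bookkeeping and the routine density approximation of $v$ by smooth functions vanishing near the lateral and upper boundaries are in place, summing over $k$ and taking the suprema over $\varphi,\psi$ yields the stated estimate, with the constant $N$ independent of $t_1,t_2$ since both $K$ and the $C_k$ are.
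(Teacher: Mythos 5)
Your strategy is the paper's own: test $u$ and $Du$ against smooth compactly supported data, solve the adjoint backward problem via Assumption \ref{aomega} $(p',q')$, write its time derivative as $D_iG_i+F$ with $G_i,F\in L_{p',q'}$ and invoke the embedding of Assumption \ref{dom} to control the adjoint solution in $L_{p_k',q_k'}$, then close with H\"older and duality. Your index bookkeeping is correct and is exactly the check the paper performs at the start of its proof: conjugation turns \eqref{401} into \eqref{eq0602_01}, \eqref{4011} into the non-borderline requirement in \eqref{eq0602_02}, and the exclusion in \eqref{40111} into $(p',p_k')\neq(d,\infty)$ in \eqref{eq0602_03}; the admissibility of the $L_{p',q'}$ forcing term in the $f$-slot is covered by Remark \ref{rem0602_1}. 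The cancellation identity you derive is the same duality identity, and the only cosmetic deviation is that you run two separate adjoint problems (data $\varphi$ and $D_j\psi_j$) where the paper solves a single one with right-hand side $D_i\varphi_i-\psi$.

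The one genuine gap is the domain on which you invoke the two assumptions. Both Assumption \ref{aomega} and Assumption \ref{dom} are formulated on $\Omega_T=(0,T)\times\Omega$, but your time reversal about $t_1+t_2$ produces a problem posed on $\Omega_{t_1,t_2}$, and you then use the solvability estimate and the embedding on $\Omega_{t_1,t_2}$ with the constants $K$ and $C_k$ as if the assumptions held there verbatim. This is exactly where the asserted independence of $N$ from $t_1,t_2$ must be earned, and it is why the paper's proof begins by extending $u$, $g_i$, $f_k$ to all of $\Omega_T$ (zero for $t\le t_1$, suitably for $t\ge t_2$, with comparable norms), extending the test functions $\varphi,\psi$ by zero, and solving the adjoint problem on the whole of $\Omega_T$ with terminal condition at $t=T$, so that \eqref{190409@A1} and the embedding \eqref{imb} are applied precisely on the domain where they are stated (the backward-in-time reading of Assumption \ref{aomega} with $a^{ji}$ is shared with the paper and is not the issue). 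Your argument is repairable along the same lines---either adopt the paper's extension of $u$ and the data, or extend the adjoint data by zero in time and solve the adjoint problem on all of $\Omega_T$---but as written this step is missing.
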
 
 
\begin{proof}
Note that $p_k' \in [p',\infty]$, $q_k' \in [q',\infty]$, and
the pairs $(p',q')$ and $(p_k',q_k')$ satisfy \eqref{eq0602_03} if $q_k'=q'$, and \eqref{eq0602_01} and \eqref{eq0602_02} if $q_k' > q'$ in place of $(p,q)$ and $(p_0,q_0)$, respectively.
That is,
\begin{enumerate}
\item[(i)] if $q_k' = q'$,
$$
\frac{d}{p'} \leq 1 + \frac{d}{p_k'}, \quad (p',p_k') \neq \left( d(\geq 2), \infty\right),
$$
\item[(ii)] if $q_k' > q'$,
$$
\frac{d}{p'} + \frac{2}{q'} \leq 1 + \frac{d}{p_k'} + \frac{2}{q_k'},
$$
and
$$
1 < q' < q_k' < \infty \quad \text{if} \quad \frac{d}{p'} + \frac{2}{q'} = 1 + \frac{d}{p_k'} + \frac{2}{q_k'}.
$$
\end{enumerate}
Hence, we are able to use the embedding inequality \eqref{imb} when both $(p,q)$ and $(p_k,q_k)$ in Assumption \ref{dom} are given by $(p',q')$, and $(p_0,q_0)$ in Assumption \ref{dom} is given by $(p_k',q_k')$. See Remark \ref{rem0602_1}.
Recall that $u\chi_{t \geq t_1} \in W_{p, q}^{1, 0}((-\infty, t_2) \times \Omega)$.
Thus if we extend $u = 0$ for $t \le t_1$ and extend $u$ appropriately for $t \geq t_2$, then $\bar{u}$, the extension of $u$, belongs to $\mathring{W}_{p, q}^{1, 0}(\Omega_T)$. Morerover, $\bar{u}$ satisfies
$$
-\bar{u}_t + D_i(a^{ij}(t, x)D_j\bar{u}) = D_i\bar{g}_i + \sum_{k=1}^{m}\bar{f}_k \quad \textrm{in} \quad \Omega_T,
$$
where $\bar{g}_i$ and $\bar{f}_k$ are appropriate extensions on $\Omega_T$ of $g_i$ and $f_k$ so that $\bar{g}_i = g_i\chi_{t \geq t_1}$ and $\bar{f}_k = f_k\chi_{t \geq t_1}$ on $(0, t_2) \times \Omega$, and the norms $\|\bar{g}\|_{L_{p,q}(\Omega_T)}$ and $\|\bar{f}_k\|_{L_{p_k,q_k}(\Omega_T)}$ are comparable to $\|g\|_{L_{p,q}(\Omega_{t_1,t_2})}$ and $\|f_k\|_{L_{p_k,q_k}(\Omega_{t_1,t_2})}$, respectively.

To prove the estimate, we use a duality argument.
Let $\psi\in C^\infty_0(\Omega_{t_1, t_2})$ and $\varphi=(\varphi_1,\ldots,\varphi_d)\in C^{\infty}_0( \Omega_{t_1, t_2} )^d $. Note that $\psi\in C^\infty_0(\Omega_T)$ and $\varphi \in C^{\infty}_0( \Omega_T )^d$ by extending $\psi = \varphi = 0$ for $t < t_1$ and $t > t_2$.
By Assumption \ref{aomega} $(p,q)$, there exists a unique $w \in \mathring{\mch}^1_{p', q'}(\Omega_T)$ satisfying $w(T, \cdot) = 0$ on $\Omega$,
\begin{equation*}
w_t + D_j(a^{ij}(t,x) D_i w)  = D_i \varphi_i -  \psi  \qquad \textrm{in } \quad \Omega_T
\end{equation*}
and
\begin{equation}		\label{190409@A1}
\| w \|_{ L_{p',q'} (\Omega_T) } + \|  Dw \|_{ L_{p',q'} (\Omega_T) } \le K \left( \| \varphi \|_{ L_{p',q'} (\Omega_T) }+\|\psi\|_{L_{p',q'}(\Omega_T)} \right).
\end{equation}
Upon writing that
$$
w_t=D_j G_j +F        \qquad \text{in } \quad  \Omega_T,
$$
where 
$G_j=-a^{ij}D_i w+\varphi_j$ and $F= - \psi$, the inequality \eqref{190409@A1} shows that
$$
\|G_j\|_{L_{p',q'}(\Omega_T)}+\|F\|_{L_{p',q'}(\Omega_T)} \le N_0\left( \| \varphi \|_{ L_{p',q'} (\Omega_T) }+\|\psi\|_{L_{p',q'}(\Omega_T)} \right),
$$
where $N_0=N_0(d,\delta, K)$.
From this inequality along with the definition of $\mathbb{H}^{-1}_{p,q}(\Omega_T)$, \eqref{190409@A1}, and the embedding inequality \eqref{imb} in Assumption \ref{dom} ($\Omega$), we have 
\begin{equation}		\label{190323@A3}
\|w\|_{L_{p'_k,q_k'}(\Omega_T)}\le  N \left( \| \varphi \|_{ L_{p',q'} (\Omega_{t_1, t_2}) }+\|\psi\|_{L_{p',q'}(\Omega_{t_1, t_2})} \right),
\end{equation}
where $N=N(d,\delta, K, C_k)$.
Recall that $C_k$ is the constant from Assumption \ref{dom} when both $(p,q)$ and $(p_k,q_k)$ are given by $(p',q'
)$, and $(p_0,q_0)$ is given by $(p_k',q_k')$.
Therefore, using H\"older's inequality, \eqref{190409@A1}, \eqref{190323@A3}, and the fact that
$$
\int_{\Omega_{t_1, t_2}} u \psi\,dx\,dt+\int_{\Omega_{t_1, t_2}}  D u\cdot \varphi \, dx\,dt =
$$
$$
\int_{\Omega_T} \bar{u} \psi\,dx\,dt+\int_{\Omega_T}  D \bar{u}\cdot \varphi \, dx\,dt = \int_{\Omega_T} \bar{g}_i D_iw \, dx\,dt    - \sum_{k=1}^m\int_{ \Omega_T } \bar{f}_k w \, dx\,dt,
$$
we obtain 
\begin{align*}
&\left| \int_{\Omega_{t_1, t_2}} u \psi\,dx\,dt+\int_{\Omega_{t_1, t_2}}  D u\cdot \varphi \, dx\, dt\right|
\\
&\le N  \left(\|\bar{g}\|_{L_{p,q}(\Omega_T)}+\sum_{k=1}^m   \|\bar{f}_k\|_{L_{p_k,q_k}(\Omega_T)}\right)\||\varphi| + |\psi|\|_{ L_{p',q'} (\Omega_{t_1, t_2}) }
\\
&\le N \left(\|g\|_{L_{p,q}(\Omega_{t_1, t_2})}+\sum_{k=1}^m \|f_k\|_{L_{p_k,q_k}(\Omega_{t_1, t_2})}\right)\| |\varphi| + |\psi| \|_{ L_{p',q'} (\Omega_{t_1, t_2})}.
\end{align*}
Since the above inequality holds for any $\varphi\in C^\infty_0(\Omega_{t_1, t_2})^d$ and $\psi\in C^\infty_0(\Omega_{t_1, t_2})$, by the duality we get the desired estimate.
The lemma is proved.
 \end{proof}
 
In the following we prove a version of Theorem \ref{theo401} when the lower-order coefficients are all zero.

\begin{proposition}
	\label{prop200422_01}
Let $\Omega$ satisfy Assumption \ref{dom} $(\Omega)$. Also let $1 < p,q<\infty$, $g=(g_1,\ldots,g_d)\in L_{p,q}(\Omega_T)^d$, and $f_k \in L_{p_k,q_k}(\Omega_T)$, $k = 1, \ldots,  m$, where $(p_k,q_k)$ satisfies \eqref{40111}--\eqref{4011}. 
Then, under Assumption \ref{aomega} $(p,q)$, 
there exists a unique $u\in \mathring{W}_{p, q}^{1, 0}(\Omega_T)$ satisfying \eqref{190405@eq1} with $u(0, \cdot) = 0$ on $\Omega$ and 
$$
\|u\|_{L_{p, q}(\Omega_T)} + \|Du\|_{L_{p, q}(\Omega_T)}  \le N \left( \|g\|_{L_{p,q}(\Omega_T)} + \sum_{k=1}^m \|f_k\|_{L_{p_1,q_1}(\Omega_T)} \right),
$$
where $N=N(d,\delta, p,q,p_k,q_k,m,K,C_k,T)$.
\end{proposition}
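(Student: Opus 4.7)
The plan is to establish uniqueness directly from Lemma \ref{190405@lem1}, and to obtain existence by approximating the low-integrability data $f_k$ by data that fits the hypotheses of Assumption \ref{aomega} $(p,q)$, then using Lemma \ref{190405@lem1} to pass to the limit. For uniqueness, if $u, v \in \mathring{W}^{1,0}_{p,q}(\Omega_T)$ are both solutions, then $w := u - v$ satisfies \eqref{190405@eq1} on $\Omega_{0,T}$ with $g_i \equiv 0$, $f_k \equiv 0$ and $w(0, \cdot) = 0$, so Lemma \ref{190405@lem1} forces $\|w\|_{W^{1,0}_{p,q}(\Omega_T)} = 0$.

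For existence, since $p_k, q_k \in [1,\infty)$, I approximate each $f_k$ in $L_{p_k,q_k}(\Omega_T)$ by a sequence $\{f_k^{(n)}\} \subset C^\infty_c(\Omega_T)$, which automatically lies in $L_{p,q}(\Omega_T)$. Assumption \ref{aomega} $(p,q)$ then yields a unique $u^{(n)} \in \mathring{\mathcal{H}}^1_{p,q}(\Omega_T) \subset \mathring{W}^{1,0}_{p,q}(\Omega_T)$ solving
$$
-u^{(n)}_t + D_i(a^{ij}(t,x) D_j u^{(n)}) = D_i g_i + \sum_{k=1}^m f_k^{(n)} \quad \text{in } \Omega_T,
$$
with $u^{(n)}(0, \cdot) = 0$. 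Applying Lemma \ref{190405@lem1} to the difference $u^{(n)} - u^{(\ell)}$, which satisfies the corresponding homogeneous problem with right-hand side $\sum_k (f_k^{(n)} - f_k^{(\ell)})$ and zero initial trace, yields
$$
\|u^{(n)} - u^{(\ell)}\|_{W^{1,0}_{p,q}(\Omega_T)} \le N \sum_{k=1}^m \|f_k^{(n)} - f_k^{(\ell)}\|_{L_{p_k,q_k}(\Omega_T)},
$$
so $\{u^{(n)}\}$ is Cauchy in $\mathring{W}^{1,0}_{p,q}(\Omega_T)$ and converges to some $u \in \mathring{W}^{1,0}_{p,q}(\Omega_T)$. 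Applying Lemma \ref{190405@lem1} to each $u^{(n)}$ and letting $n \to \infty$ delivers the a priori estimate claimed in the proposition.

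To finish, I must check that $u$ solves \eqref{190405@eq1} with the original $f_k$. Writing the weak identity for $u^{(n)}$ tested against an arbitrary admissible smooth $\varphi$, the terms involving $u^{(n)}$, $D u^{(n)}$, and $g_i$ pass to the limit by $L_{p,q}$--$L_{p',q'}$ duality (using $\varphi_t, D\varphi, \varphi \in L_{p',q'}(\Omega_T)$), while $\int f_k^{(n)} \varphi\,dx\,dt \to \int f_k \varphi\,dx\,dt$ because $f_k^{(n)} \to f_k$ in $L_{p_k,q_k}(\Omega_T)$ and $\varphi \in L_{p_k',q_k'}(\Omega_T)$ (being bounded with compact support in $[0,T]\times\Omega$). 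Membership in $\mathring{W}^{1,0}_{p,q}(\Omega_T)$ together with the vanishing initial trace encoded via the weak formulation of Section \ref{solution} are preserved under strong $W^{1,0}_{p,q}$-convergence, so $u$ is the required solution. The substantive point of the whole argument is that Lemma \ref{190405@lem1} controls $u^{(n)}$ by $\sum_k \|f_k^{(n)}\|_{L_{p_k,q_k}}$ rather than by the stronger $L_{p,q}$ norm: this is exactly what the duality-plus-embedding proof of that lemma provides, and once it is available the remaining steps are routine approximation.
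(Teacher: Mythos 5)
Your argument is correct and follows essentially the same route as the paper: approximate the $f_k$ by smooth compactly supported functions, solve via Assumption \ref{aomega} $(p,q)$, apply Lemma \ref{190405@lem1} to get a Cauchy sequence in $\mathring{W}^{1,0}_{p,q}(\Omega_T)$ controlled by the $L_{p_k,q_k}$ norms, and pass to the limit, with uniqueness coming from the same a priori estimate. The paper's proof is just a terser version of this; your explicit verification of the limit passage in the weak formulation fills in details the paper leaves to the reader.
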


\begin{proof} 
The proposition follows from Assumption \ref{aomega} $(p,q)$ along with the estimate in Lemma \ref{190405@lem1} with $f_k^n \in L_{p,q}(\Omega_T)$ such that $f_k^n \to f_k$ in $L_{p_k,q_k}(\Omega_T)$ as $n \to \infty$.
Indeed, by Assumption \ref{aomega} $(p,q)$, there exists unique solution $u^n \in \mathring{\mathcal{H}}_{p, q}^1(\Omega_T)$ of the equation
$$
-u^n_t + D_i\left( a^{ij}D_ju^n \right) = D_ig_i + \sum_{k=1}^m f^n_k \quad \textrm{in} \quad \Omega_{T}
$$
with $u^n(0, \cdot) = 0$ on $\Omega$ where $f^n_k \in C_0^{\infty}(\Omega_T)$ and $f^n_k \to f_k$ in $L_{p_k, q_k}(\Omega_T)$, $k = 1, \ldots ,m$.
Then by applying Lemma \ref{190405@lem1} with $(t_1, t_2) = (0, T)$ to $u^n$, $n = 1, 2, \ldots$, we obtain the desired result.
\end{proof}

Now we are ready to prove Theorem \ref{theo401}.
 
 \begin{proof}[Proof of Theorem \ref{theo401}]
By Proposition \ref{prop200422_01}, we have the unique solvability and estimate for the equation when the lower-order coefficients are all zero. 

To deal with the equation with lower-order coefficients, throughout the proof, we fix
$$
(p_2, q_2) = \left(\frac{\ell_2p}{\ell_2 + p}, \frac{r_2q}{r_2 + q} \right).
$$
We also fix $(p_3,q_3)$ as follows.
If $(\ell_3,r_3)$ satisfies Condition \ref{cond0625_1}, we set
$$
\frac{1}{p_3} = \frac{1}{\ell_3}+\frac{1}{p}-\frac{1}{\alpha}, \quad \frac{1}{q_3} = \frac{1}{r_3}+\frac{1}{q},
$$
where $\alpha$ satisfies
\begin{equation}
							\label{eq0625_01}
\max\left\{0, \frac{d}{\ell_3} + \frac{d}{p} - d, \frac{d}{\ell_3} + \frac{2}{r_3}- 1 \right\} \leq \frac{d}{\alpha} \leq \min\left\{\frac{d}{\ell_3}, \frac{d}{p},1\right\},
\end{equation}
and, if $1/r_3=1-1/q$ and $q \in (1,2)$, $\alpha$  satisfies \eqref{eq0625_01} as well as
\begin{equation}
						\label{eq0625_13}
\frac{d}{\ell_3} + \frac{2}{r_3}- 1 < \frac{d}{\alpha}.
\end{equation}
If $p = d \geq 2$, we additionally require that $\alpha > d$, which is possible by \eqref{eq0626_01}.
We see that $(p_3,q_3)$ satisfies \eqref{40111}--\eqref{4011} in place of $(p_1,q_1)$.
In particular, the first condition in \eqref{40111} is satisfied because $r_3 < \infty$, that is, $q_3 \neq q$.
Thanks to \eqref{eq0625_13}, the third condition \eqref{4011} also holds.
If we denote
$$
p_0 = \frac{\ell_3 p_3}{\ell_3 - p_3}, \quad q_0 = \frac{r_3q_3}{r_3 - q_3},
$$
then $(p_0,q_0)$ satisfies (i) in Assumption \ref{dom}.
Precisely, the conditions in \eqref{eq0602_03} follow from $d/\alpha \leq 1$ and $p_3 < \ell_3$ for $p=d \geq 2$.

If $(\ell_3,r_3)$ satisfies Condition \ref{cond0625_2}, we set
$$
\frac{1}{p_3} = \frac{1}{\ell_3}+\frac{1}{p}-\frac{1}{\alpha},
\quad \frac{1}{q_3} = \frac{1}{r_3}+\frac{1}{q}-\frac{1}{\beta},
$$
where $\alpha$ and $\beta$ satisfy
\begin{equation}
							\label{eq0625_17}
\max\left\{\frac{d}{\ell_3} + \frac{d}{p} - d, 0\right\} \leq \frac{d}{\alpha} \leq \min\left\{\frac{d}{\ell_3}, \frac{d}{p} \right\},
\end{equation}
\begin{equation}
							\label{eq0625_02}
\max\left\{\frac{2}{r_3} + \frac{2}{q} - 2, 0 \right\} \leq \frac{2}{\beta} \leq \min\left\{ \frac{2}{r_3}, \frac{2}{q} \right\},
\end{equation}
\begin{equation}
							\label{eq0625_03}
\frac{2}{r_3} + \frac{2}{q} - 2 < \frac{2}{\beta} < \frac{2}{q} \quad \text{if} \quad r_3 > 1,
\end{equation}
and
\begin{equation}
							\label{eq0625_14}
\frac{d}{\ell_3} + \frac{2}{r_3}- 1 \leq \frac{d}{\alpha} + \frac{2}{\beta} \leq \min\left\{1, \frac{d}{p} + \frac{2}{q}\right\}.
\end{equation}
If $p = d/(d-1) < \infty$ or $r_3 = 1$, instead of \eqref{eq0625_14}, we take $\alpha$ and $\beta$ satisfying
\begin{equation}
							\label{eq0625_05}
\frac{d}{\ell_3} + \frac{2}{r_3}- 1 < \frac{d}{\alpha} + \frac{2}{\beta} < \min\left\{1,\frac{d}{p} + \frac{2}{q}\right\}.
\end{equation}
If $p = d \geq 2$, we additionally require that $\alpha > d$, which is possible by the following reason.
The choice $\alpha = d = p$ (thus, $1/\beta = 0$) is unavoidable only when $\alpha = d$ is the only choice from \eqref{eq0625_17} with $d/\ell_3 \geq 1$.
Indeed, because $2/r_3 > 0$, there is a non-empty intersection between $(0,2/r_3)$ and the interval in \eqref{eq0625_03}.
Thus, one can choose an appropriate pair $(\alpha, \beta)$ with $\alpha > d$ satisfying \eqref{eq0625_14} if $d/\ell_3 + 1 - d < 1$ in \eqref{eq0625_17}.
When $\alpha =d$ is the only choice from \eqref{eq0625_17}, that is, $d/\ell_3 + 1 - d = 1$, we have $\ell_3 = 1$, which is impossible by the conditions $d/\ell_3 + 2/r_3 \leq 2$, $d \geq 2$, and $r_3 < \infty$.

Note that we need \eqref{eq0625_16}, in particular,
$$
\frac{d}{\ell_3} + \frac{2}{r_3} \leq 1 + d+2 - \left(\frac{d}{p} + \frac{2}{q}\right)
$$
to have a non-empty interval for $d/\alpha+2/\beta$ in \eqref{eq0625_14} when
$$
d+2 - (d/p+2/q) < 1 \leq d/p+2/q.
$$
Also note that, to have again a non-empty interval for $d/\alpha+2/\beta$ in \eqref{eq0625_14}, we necessarily need
$$
\frac{d}{\ell_3} + \frac{2}{r_3} - 1 \leq \min\left\{\frac{d}{\ell_3}, \frac{d}{p} \right\} + \min\left\{\frac{2}{r_3}, \frac{2}{q} \right\}.
$$
In particular, we have
$$
\frac{d}{\ell_3} + \frac{2}{r_3} - 1 \leq \frac{d}{p} + \frac{2}{r_3}, \quad \frac{d}{\ell_3} + \frac{2}{r_3} - 1 \leq \frac{d}{\ell_3} + \frac{2}{q},
$$
which explains the necessity of the inequalities in \eqref{eq0625_18}.
Similarly, to have a non-empty half open interval for $d/\alpha+2/\beta$ in \eqref{eq0625_05} we have \eqref{eq0625_04} for $p=d/(d-1)$ or $r_3=1$.
Observe that $(p_3,q_3)$ satisfies \eqref{40111}--\eqref{4011} in place of $(p_1,q_1)$.
In particular, when $p = d/(d-1)$, it follows from \eqref{eq0625_05} that $(p_3, q_3) \neq (1, q)$.
Indeed, if $(p_3, q_3) = (1, q)$, that is,
$$
\frac{1}{\ell_3} = 1 - \frac{1}{p} + \frac{1}{\alpha}, \quad \frac{1}{r_3} = \frac{1}{\beta} \quad \Rightarrow  \quad \frac{d}{\ell_3} + \frac{2}{r_3} - 1 = \frac{d}{\alpha} + \frac{2}{\beta},
$$
which is prohibited by \eqref{eq0625_05}.
The condition \eqref{401} is satisfied by \eqref{eq0625_14}.
If $r_3 > 1$, then $q_3 > 1$ by \eqref{eq0625_03}.
If $r_3 = 1$, then $\beta = q$ by \eqref{eq0625_02}, which implies $q_3 = 1$.
In this case by \eqref{eq0625_05} we have
$$
\frac{d}{p_3} + \frac{2}{q_3} < 1 + \frac{d}{p} + \frac{2}{q}.
$$
Hence, \eqref{4011} is also satisfied.
As above, if we denote
$$
p_0 = \frac{\ell_3 p_3}{\ell_3 - p_3}, \quad q_0 = \frac{r_3q_3}{r_3 - q_3},
$$
then $(p_0,q_0)$ satisfies (i) or (ii) in Assumption \ref{dom}.
Indeed, if $q_0 = q$, that is, $1/\beta = 0$, then by \eqref{eq0625_14} we have $d/\alpha \leq 1$, which guarantees
$$
\frac{d}{p} \leq 1 + \frac{d}{p_0}.
$$
The second condition in \eqref{eq0602_03} holds because $p_0 <\infty$ by the choice of $\alpha > d$ when $p = d$.
If $q_0 > q$, the conditions in \eqref{eq0602_01} and \eqref{eq0602_02} are satisfied by the choices of $\alpha$ and $\beta$ in \eqref{eq0625_03}, \eqref{eq0625_14}, and \eqref{eq0625_05}.
The conditions in \eqref{eq0603_02} and \eqref{eq0603_03} readily follow from $d/\ell_3 + 2/r_3 \leq 2$.
In particular, we have $q_0 < \infty$ by the choice of $\beta$ in \eqref{eq0625_03} for $r_3 > 1$.
If $r_3 = 1$, then as above, $\beta = q$ and $q_3 = 1$, which means that $q_0 = \infty$.

We summarize the properties of the pairs given above as follows. The pairs $(p_2,q_2)$ and $(p_3,q_3)$ satisfy \eqref{40111}--\eqref{4011} as $(p_1,q_1)$ does. 
The pairs
$$
\left(\frac{\ell_1 p}{\ell_1 - p}, \frac{r_1q}{r_1-q}\right) \quad \textrm{and} \quad \left(\frac{\ell_3p_3}{\ell_3 -p_3}, \frac{r_3q_3}{r_3 - q_3}\right)
$$
satisfy the conditions in Assumption \ref{dom} in place of $(p_0,q_0)$.
In particular,
$$
\frac{r_1q}{r_1-q} \in (q,\infty] \quad \text{and} \quad \frac{r_3q_3}{r_3 - q_3} \in [q, \infty).
$$
Moreover, 
the triples
\begin{equation}
							\label{eq0603_05}
\left(\frac{\ell_1 p}{\ell_1 - p}, \frac{r_1q}{r_1-q}\right), \,\, (p,q), \,\, (p_k,q_k), \quad k = 1,2,3,
\end{equation}
and
\begin{equation}
							\label{eq0603_06}
\left(\frac{\ell_3p_3}{\ell_3 -p_3}, \frac{r_3q_3}{r_3 - q_3}\right),\,\, (p,q), \,\, (p_k,q_k), \quad k = 1,2,3,
\end{equation}
satisfy the conditions in Assumption \ref{dom} in place of the triple $(p_0,q_0)$, $(p,q)$, and $(p_k,q_k)$ there.
In particular, to check the conditions in Assumption \ref{dom} for $(p_k,q_k)$, we use the fact that $(p_k,q_k)$, $k=1,2,3$, satisfy \eqref{40111}--\eqref{4011}.

Set $\mathfrak{H}(\Omega_T)$ to be the collection of functions $u \in W_{p,q}^{1,0}(\Omega_T)$ such that
$$
u_t = D_ig_i + \sum_{k=1}^3 f_k \quad \textrm{in} \quad \Omega_T
$$
in the distribution sense,
where $f_k \in L_{p_k, q_k}(\Omega_T)$ for $k = 1, 2, 3$.
Note that $\mathfrak{H}(\Omega_T)$ is a Banach space with the norm
$$
\|v\|_{\mathfrak{H}(\Omega_T)} = \| v \|_{W_{p, q}^{1, 0}(\Omega_T)} + \inf\left\{ \|g\|_{L_{p,q}(\Omega_T)}+\sum_{k=1}^3\|f_k\|_{L_{p_k,q_k}(\Omega_T)}: v_t=D_i g_i + \sum_{k=1}^3 f_k \right\}.
$$
Then, for $u \in \mathfrak{H}(\Omega_T)$, 
since the triples in \eqref{eq0603_05} and \eqref{eq0603_06} satisfy the conditions in Assumption \ref{dom}, we have
\begin{multline*}
\|u\|_{L_{\frac{\ell_1 p}{\ell_1 - p}, \frac{r_1q}{r_1-q}}(\Omega_T)} + \|u\|_{L_{\frac{\ell_3p_3}{\ell_3 -p_3}, \frac{r_3q_3}{r_3 - q_3}}(\Omega_T)}
\\
\leq C \left( \|u\|_{W_{p,q}^{1,0}(\Omega_T)} + \|g\|_{L_{p,q}(\Omega_T)} + \sum_{k=1}^3 \|f_k\|_{L_{p_k,q_k}(\Omega_T)}\right)
\end{multline*}
whenever $u_t = D_i g_i + \sum_{k=1}^3f_k$ in $\Omega_T$ in the distribution sense.
Thus, by the definition of the norm of $\mathfrak{H}(\Omega_T)$, we can say that
\begin{equation}
							\label{eq0603_07}
\|u\|_{L_{\frac{\ell_1 p}{\ell_1 - p}, \frac{r_1q}{r_1-q}}(\Omega_T)} + \|u\|_{L_{\frac{\ell_3p_3}{\ell_3 -p_3}, \frac{r_3q_3}{r_3 - q_3}}(\Omega_T)}
\leq C \|u\|_{\mathfrak{H}(\Omega_T)}.
\end{equation}

We now prove the a priori estimate \eqref{403} for $u \in \mathring{W}_{p,q}^{1,0}(\Omega_T)$ satisfying \eqref{402} under the assumption that $u \in \mathfrak{H}(\Omega_T)$.
Clearly, in the case without lower-order terms we just have solved, the solution belongs to $\mathfrak{H}(\Omega_T)$.

Since $u\in \mathfrak{H}(\Omega_T)$ is a solution of \eqref{402},
it holds that 
\begin{equation}
							\label{eq0501_01}
-u_t+D_i (a^{ij}D_j u) =D_i \tilde{g}_i +f-b^i D_i u-cu \qquad \text{in } \quad \Omega_T,
\end{equation}
where $\tilde{g}_i=g_i-a^i u$.
We estimate the terms on the right-hand side of the identity \eqref{eq0501_01} as follows.

\begin{enumerate}[(i)]
\item
Estimate of $\tilde{g}_i$: By the triangle and H\"older's inequalities, we have 
$$
\begin{aligned}
\|\tilde{g}_i\|_{L_{p,q}(\Omega_T)}
&\le \|g_i\|_{L_{p,q}(\Omega_T)}+\|a^i u \|_{L_{p,q}(\Omega_T)} \\
&\le \|g_i\|_{L_{p,q}(\Omega_T)}+\|a^i \|_{L_{\ell_1,r_1}(\Omega_T)}\|u\|_{L_{\frac{\ell_1 p}{\ell_1-p}, \frac{ r_1q }{r_1-q}}(\Omega_T)}.  \\
\end{aligned}
$$
From this and \eqref{eq0603_07} it follows that
$$
\|\tilde{g}_i\|_{L_{p,q}(\Omega_T)}  \le \|g_i\|_{L_{p,q}(\Omega_T)} + C \|a^i \|_{L_{\ell_1,r_1}(\Omega_T)} \|u\|_{\mathfrak{H}(\Omega_T)}.
$$

\item
Estimate of $b^i D_i u$:
By H\"older's inequality, we have 
$$
\|b^i D_i u\|_{L_{{p}_2,{q}_2}(\Omega_T)}\le \|b^i\|_{L_{\ell_2, r_2}(\Omega_T)} \|D_i u\|_{L_{p,q}(\Omega_T)}.
$$

\item
Estimate of $cu$:
By H\"older's inequality and \eqref{eq0603_07}, we have 
$$
\begin{aligned}
\|cu\|_{L_{{p}_3, {q}_3}(\Omega_T)}
&\le \|c\|_{L_{\ell_3,r_3}(\Omega_T)} \|u\|_{L_{\frac{\ell_3p_3}{\ell_3 -p_3}, \frac{r_3q_3}{r_3 - q_3}}(\Omega_T)}\\
&\le C \|c\|_{L_{\ell_3,r_3}(\Omega_T)} \| u \|_{\mathfrak{H}(\Omega_T)}.
\end{aligned}
$$
\end{enumerate} 

Now we are ready to prove the estimate \eqref{403}.
By applying Lemma \ref{190405@lem1} to the equation \eqref{eq0501_01} with $(t_1, t_2) = (0, T)$, we have
\begin{equation}		\label{190412@eq33}
\begin{aligned}
\|u\|_{\mathfrak{H}(\Omega_T)} \le N' \left( \|\tilde{g}\|_{L_{p,q}(\Omega_T)} + \|f\|_{L_{p_1,q_1}(\Omega_T)} +  \|b^i D_i u\|_{L_{p_2,q_2}(\Omega_T)}+ \|cu\|_{L_{p_3,q_3}(\Omega_T)} \right),
\end{aligned}
\end{equation}
where $N' = N'(d, \delta, p, q, p_1, q_1, \ell_k, r_k, C, K)$.
Note that $C$ represents the constants from Assumption \ref{dom} $(\Omega)$ associated with the parameters in \eqref{eq0603_05} and \eqref{eq0603_06}.
Using the estimates derived in (i)--(iii), we see that the right-hand side of the inequality \eqref{190412@eq33} is bounded by 
$$
\begin{gathered}
N' \|g\|_{L_{p,q}(\Omega_T)}+N' \|f\|_{L_{p_1,q_1}(\Omega_T)} + N' \|b^i\|_{L_{\ell_2, r_2}(\Omega_T)}  \|Du\|_{L_{p,q}(\Omega_T)}\\
+N' \| u \|_{\mathfrak{H}(\Omega_T)} \left( C_1  \|a^i \|_{L_{\ell_1,r_1}(\Omega_T)} + C_3  \|c\|_{L_{\ell_3,r_3}(\Omega_T)} \right)  .
\end{gathered}
$$
Then we have from \eqref{190412@eq33} that
\begin{equation}
\label{308}
\begin{aligned}
\|u\|_{\mathfrak{H}(\Omega_T)} \le &N' \|g\|_{L_{p,q}(\Omega_T)} + N' \|f\|_{L_{p_1,q_1}(\Omega_T)}
\\
&+ N'' \|u\|_{\mathfrak{H}(\Omega_T)},
\end{aligned}
\end{equation}
where 
$$
N''=  N'  N_1  \|a^i \|_{L_{\ell_1,r_1}(\Omega_T)} + N' \|b^i\|_{L_{\ell_2, r_2}(\Omega_T)}   + N' N_3  \|c\|_{L_{\ell_3,r_3}(\Omega_T)}.
$$
If $N'' < 1$, then with the use of \eqref{308} we obtain the a priori estimate \eqref{403}. 
If not, we apply the method based on splitting the interval $[0, T]$ as follows. 
Note that 
$$
N'' \le N_0  \left( \int_0^T  \left( \| a^i (t, \cdot) \|_{L_{\ell_1}(\Omega)} ^{r_1} + \| b^i (t, \cdot ) \|_{L_{\ell_2} (\Omega)}^{r_2} + \| c(t, \cdot) \|_{L_{\ell_3}(\Omega)}^{r_3} \right) \, dt \right)^{\frac{1}{r_0}}
$$
for some $r_0 \in \{ r_1, r_2, r_3 \}$ and $N_0=N_0(d,\delta, p, q, p_1, q_1, \ell_1, r_1, \ell_2,r_2,\ell_3, r_3, K, C, T) $.
Denote
$$
\mu(s_1,s_2) :=  N_0  \left( \int_{s_1}^{s_2}  \left( \| a^i (t, \cdot) \|_{L_{\ell_1}(\Omega)} ^{r_1} + \| b^i (t, \cdot ) \|_{L_{\ell_2} (\Omega)}^{r_2} + \| c(t, \cdot) \|_{L_{\ell_3}(\Omega)}^{r_3} \right) \, dt \right)^{\frac{1}{r_0}}.
$$
We divide the interval $[0, T]$ into a finite number of subintervals 
$[t_0, t_1]$, $[t_1, t_2]$,  $ \ldots$, $[t_{\tilde{n}-1}, t_{\tilde{n}}]$ such that $t_0=0$, $t_{\tilde{n}}=T$, and 
$$
\frac{1}{2} \left( \frac{1}{4N_0} \right)^{r_0} \le  
\int_{t_{k-1}}^{t_k}  \left( \| a^i (t, \cdot) \|_{L_{\ell_1}(\Omega)} ^{r_1} + \| b^i (t, \cdot ) \|_{L_{\ell_2} (\Omega)}^{r_2} + \| c(t, \cdot) \|_{L_{\ell_3}(\Omega)}^{r_3} \right) \, dt
$$
$$
\le \frac{1}{2} \left( \frac{1}{2N_0} \right)^{r_0}
$$
for $k=1,2, \cdots \tilde{n}$.
Then
$$
 \frac{1}{2^{1/r_0} \cdot 4} \le \mu(t_{k-1}, t_k) \le \frac{1}{2^{1/r_0} \cdot 2}.
$$
Note that 
$$ \begin{aligned}
\frac{ \tilde{n} } {2}  \left( \frac{1}{4} \right)^{r_0} & \le \, \sum_{k=1}^{\tilde{n}} \left( \mu(t_{k-1}, t_k) \right)^{r_0}\\
&  = \, N_0^{r_0}   \int_{0}^{T}  \left( \| a^i (t, \cdot) \|_{L_{\ell_1}(\Omega)} ^{r_1} + \| b^i (t, \cdot ) \|_{L_{\ell_2} (\Omega)}^{r_2} + \| c(t, \cdot) \|_{L_{\ell_3}(\Omega)}^{r_3} \right) \, dt.
\end{aligned}$$
That is,
$$
\tilde{n} \le 2 (4 N_0)^{r_0} \int_{0}^{T}  \left( \| a^i (t, \cdot) \|_{L_{\ell_1}(\Omega)} ^{r_1} + \| b^i (t, \cdot ) \|_{L_{\ell_2} (\Omega)}^{r_2} + \| c(t, \cdot) \|_{L_{\ell_3}(\Omega)}^{r_3} \right) \, dt.
$$

Now we find finitely many points $\{ 0= s_0, s_1, \cdots,  s_{n_1}=T \}$ such that $s_k -s_{k-1}= l $ for $k=1,2, \cdots, n_1 - 1$ and $s_{n_1} - s_{n_1 - 1} < l$, where 
$$
l := \min \{ t_k - t_{k-1} :  k=1, 2, \cdots, \tilde{n} \}.
$$
Since, for each $n = 1, 2, \ldots, n_1$, there exists $j \in \{0,1, \cdots, \tilde{n}-2\}$ such that $[s_{n-1}, s_n] \subset [t_{j}, t_{j+2}]$, we see that
$$
\int_{s_{n-1}}^{s_n}  \left( \| a^i (t, \cdot) \|_{L_{\ell_1}(\Omega)} ^{r_1} + \| b^i (t, \cdot ) \|_{L_{\ell_2} (\Omega)}^{r_2} + \| c(t, \cdot) \|_{L_{\ell_3}(\Omega)}^{r_3} \right) \, dt
\le  \left( \frac{1}{2 N_0} \right)^{r_0}.
$$
Thus, $ \mu(s_{n-1},s_n) \le \frac{1}{2}$.
For convenience, we substitute $t_k$ for $s_k$.

Note that the above steps can be repeated with $\Omega_{t_0,t_1}$ (i.e., $\Omega_{0,t_1}$) in place of $\Omega_T$.
In particular, we obtain the estimate \eqref{308} for $\Omega_{t_0,t_1}$ with the same $N'$ and the constant $N''$ bounded by $\mu(t_0,t_1)$.
Since $ \mu(t_{0}, t_1) \le \frac{1}{2} $, it follows that
\begin{equation} 
	\label{309}
\|u\|_{\mathfrak{H}(\Omega_{t_0, t_1})}  \le N \left( \|g\|_{L_{p,q}(\Omega_{t_0, t_1})}+  \|f\|_{L_{p_1,q_1}(\Omega_{t_0, t_1})} \right), 
\end{equation}
where $N=N(d, \delta, p, q, p_1, q_1,\ell_1, r_1, \ell_2, r_2, \ell_3, r_3, K, C, T)$. 
In particular, $N$ is independent of $t_1$.


Define 
$$ 
\bar{u}(t,x) =  \left\{
\begin{array}{ccc} 
u(t,x) & \textrm{ if } & t_0 <t <t_1, \\
u(2t_1 -t, x ) & \textrm{ if } & t_1 \le t <t_2,
\end{array} \right.
$$
and set
$v = u - \bar{u}$ on $\Omega_{t_1,t_2}$.
Also set $\bar{a}^{ij}(t,x)$, $\bar{a}^{i}(t,x)$, $\bar{b}^i(t,x)$, $\bar{c}(t,x)$, $\bar{g}_i(t, x)$, and $\bar{f}(t, x)$ to be the odd extensions of $a^{ij}(t,x)$, $a^i(t,x)$, $b^i(t,x)$, $c(t,x)$ $g_i(t, x)$, and $f(t, x)$ with respect to $t$,  respectively. More precisely, for instance,
$$ 
\bar{a}^{ij}(t,x) =  \left\{
\begin{array}{ccc} 
a^{ij}(t,x) & \textrm{ if } & t_0 <t <t_1, \\
-a^{ij}(2t_1 -t, x ) & \textrm{ if } & t_1 \le t <t_2. \\
\end{array} \right.
$$
Then $v$ satisfies
\begin{multline*}
  -v_t + D_i \left( a^{ij} D_j v +a^{i} v \right) + b^i D_i v +cv   =D_i( g_i - \bar{g_i})  + f - \bar{f} \\
+ D_i \left( (\bar{a}^{ij}-a^{ij}) D_j \bar{u} + (\bar{a}^{i}-a^i) \bar{u} \right) + (\bar{b}^i-b^i) D_i \bar{u} + (\bar{c}-c) \bar{u} 
\end{multline*}
in $\Omega_{t_1,t_2}$ with $v(t_1,\cdot) = 0$ on $\Omega$ as in \eqref{eq0310_01} with $t_1$ in place of $0$.

Since $\mu(t_1, t_2) \le \frac{1}{2}$, by applying the above argument for the estimate \eqref{309} to $v$ and $\Omega_{t_1, t_2}$ in place of $u$ and $\Omega_{t_0, t_1}$, respectively, it follows that
\begin{align*}
&\|v\|_{\mathfrak{H}(\Omega_{t_1, t_2})} \le N \|g\|_{L_{p,q}(\Omega_{t_0, t_2})}+N \|f\|_{L_{p_1,q_1}(\Omega_{t_0, t_2})}
\\
&+ N \| Du \|_{L_{p,q}(\Omega_{t_0,t_1})} 
+ N \| (\bar{a}^{i}-a^i )  \bar{u} \|_{L_{p, q}(\Omega_{t_1, t_2})}
\\
&+ N \| (\bar{b}^i-b^i) D_i \bar{u} \|_{L_{p_2,q_2}(\Omega_{t_1, t_2})} +  N \| (\bar{c}-c) \bar{u} \|_{L_{p_3,q_3}(\Omega_{t_1, t_2})}.
\end{align*}
We now apply the inequalities in (i)--(iii) and the estimate \eqref{309} to the last four terms to get
$$
\|v\|_{\mathfrak{H}(\Omega_{t_1, t_2})} \le N \left( \|g\|_{L_{p,q}(\Omega_{t_0 ,t_2})}+ \|f\|_{L_{p_1,q_1}(\Omega_{t_0, t_2})} \right),
$$
where $N = N(d, \delta, p, q, p_1, q_1, \ell_1, r_1, \ell_2, \ell_3, r_2, r_3, \|a^i\|, \|b^i\|, \|c\|, K, C, T)$.

Using $u= v+ \bar{u}$ and by \eqref{309}, we get
$$
\|u\|_{\mathfrak{H}(\Omega_{t_0, t_2})} \le N  \left( \|g\|_{L_{p,q}(\Omega_{t_0 ,t_2})}+N \|f\|_{L_{p_1,q_1}(\Omega_{t_0, t_2})} \right).
$$

Now, thanks to $ \mu(t_{k-1}, t_k) \le \frac{1}{2} $, we repeatedly apply the above argument on the time intervals $[t_2, t_3]$, $[t_3, t_4]$, $\ldots$, $[t_{n_1-1}, t_{n_1}]$ 
to derive
\begin{equation}
							\label{eq0604_01}
\|u\|_{\mathfrak{H}(\Omega_{t_0, t_{n_1}})} \le N \|g\|_{L_{p,q}(\Omega_{t_0 ,t_{n_1}})}+N \|f\|_{L_{p_1,q_1}(\Omega_{t_0, t_{n_1}})},
\end{equation}
where
$$
N=N(d, \delta, p, q, p_1, q_1, \ell_1, r_1, \ell_2, \ell_3, r_2, r_3, \|a^i\|, \|b^i\|, \|c\|, K, C, n_1, T)
$$
$$
= N(d,\delta, p, q, p_1, q_1, \ell_1, r_1, \ell_2, \ell_3, r_2, r_3, a^i, b^i, c, K, C, T).
$$
We there conclude the estimate \eqref{403} upon noting that $(0, T) \times \Omega = \Omega_T = \Omega_{t_0, t_{n_1}} $.

To complete the proof, we prove the unique solvability of the equation with lower-order terms.
Thanks to the a prior estimate proved above, we only prove the existence of a  solution.
Denote
$$
\mathcal{P}_\lambda u = -u_t +D_i \left( a^{ij}(t,x) D_j u + \lambda a^i(t,x) u \right)  + \lambda b^i(t,x) D_i u  + \lambda c(t,x) u,
$$
where
$\lambda \in [0,1]$.
Then from the proof above for the a priori estimate, we see that the estimate \eqref{403} holds independent of $\lambda \in [0,1]$.
Moreover, we have a solution $u \in \mathring{W}_{p,q}^{1,0}(\Omega_T) \cap \mathfrak{H}(\Omega_T)$ of the equation $\mathcal{P}_0 u = D_i g_i + f$ in $\Omega_T$ with $u(0,\cdot) = 0$ on $\Omega$.
Then using the method of continuity argument, we find a solution $u \in \mathring{W}_{p,q}^{1,0}(\Omega_T) \cap \mathfrak{H}(\Omega_T)$ of the equation \eqref{402} with $u(0,\cdot)=0$ on $\Omega$.
In particular, when proceeding in this way, we utilize the fact that the solutions to $\mathcal{P}_\lambda u = D_i g_i + f$, $\lambda \in [0,1]$, belong to $\mathfrak{H}(\Omega_T)$.

The theorem is proved.
\end{proof}
  
\begin{proof}[Proof of Theorem \ref{conormal}]
We follow the proof of Theorem \ref{theo401}. First note that Lemma \ref{190405@lem1} also holds for conormal derivative problems with $u \in W_{p,q}^{1,0}(\Omega)$ in place of $u \in \mathring{W}_{p,q}^{1,0}(\Omega)$. 
Indeed, the duality argument adopted in Lemma \ref{190405@lem1} is available for the conormal derivative problem.
Then the remaining is the same as in the proof of Theorem \ref{theo401}.
 \end{proof}

\section{Solutions in $V_{p,q}$ for $p \geq 2$ and $q \ge 2$}
\label{Vspace}

In this section we give further regularity results for the solution of the equation \eqref{402} with the Dirichlet boundary condition case or the conormal derivative boundary condition case.

We first provide some well-known embedding inequalities for the space $\mathring{V}_{p,q}$, the elements of which have the vanishing lateral boundary condition.
Note that Lemma \ref{multi} (and its corollary) is irrelevant to the regularity of $\partial\Omega$ because the zero lateral boundary condition is considered.

Let $1 < p, q < \infty$. A solution $u \in \mathring{V}_{p,q}(\Omega_T)$ of \eqref{402} with $u(0, \cdot) = 0$ on $\Omega$ is understood in the sense of the integral identity
\begin{multline}
\label{eq0415_01}
\int_{0}^\tau \int_\Omega \left( u \varphi_t  -a^{ij} D_ju D_i \varphi -a^i u D_i\varphi +b^i D_iu \, \varphi + c u \varphi   \right) \, dx\,dt \\
= \int_{\Omega} u (\tau, x) \varphi(\tau,x) \, dx +  \int_{0}^\tau \int_\Omega \left( - g_i D_i \varphi + f\varphi \right) \, dx \, dt  
\end{multline}
for almost all $\tau \in (0, T)$ and smooth test functions $\varphi$ defined in the closure of $(0, \tau) \times \Omega$ and
vanishing in a neighborhood of the lateral boundary of $(0, \tau) \times \Omega$. 
We note that if $u \in \mathring{V}_{p, q}(\Omega_T)$ is a solution to \eqref{402} with $u(0, \cdot) = 0$ on $\Omega$, 
then $v(t, x) := u(t,x)\chi_{t \ge 0}$ is in $\mathring{V}_{p, q}((-\infty, T) \times \Omega)$ and $\|v\|_{V_{p,q}((-\infty, T)\times \Omega)}$ is equal to $\|u\|_{V_{p,q}(\Omega_T)}$.
Also note that if $u \in \mathring{V}_{p,q}(\Omega_T)$, the two notions \eqref{eq0415_01} and \eqref{eq0310_01} of solutions become equivalent.

\subsection{The Dirichlet case}

\begin{lemma} 
\label{multi}
Let $u\in \mathring{V}_{p,q}((S, T) \times \Omega)$, where $-\infty\le S<T\le \infty$ and $p,q\in [1,\infty)$.
Then for any $\kappa \in[\max\{p,d\}, \infty]$ satisfying $\kappa >p$ if $p=d \ge2$, we have 
$$
\|u\|_{L_{\frac{\kappa p}{\kappa -p},\frac{\kappa q}{d}}((S, T) \times \Omega)} 
\le N \left(  \operatorname*{ess\,sup}_{S<t<T} \| u(t, \cdot) \|_{L_p(\Omega)} + \Vert Du \Vert_{L_{p,q}\left( (S, T) \times \Omega \right)} \right),
$$
where $N=N(d,p,\kappa)>0$.
Here, $\frac{\kappa p}{\kappa -p}=p$ if $\kappa=\infty$ and $\frac{\kappa p}{\kappa -p}=\infty$ if $\kappa=p$.

\end{lemma}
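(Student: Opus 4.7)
The plan is to combine a pointwise-in-time Gagliardo--Nirenberg interpolation inequality with integration in time. Since $u \in \mathring{V}_{p,q}((S,T) \times \Omega)$, by a density/approximation argument I may first argue for $u \in C_0^\infty([S,T] \times \Omega)$ with constants independent of the approximation, then pass to the limit using $V_{p,q}$-convergence together with Fatou's lemma on the left-hand side. For such $u$, extending $u(t,\cdot)$ by zero outside $\Omega$ produces $\bar u(t,\cdot) \in W^{1,p}(\mr^d)$ with the $L_p$-norms of $u$ and $Du$ preserved; this reduces the problem to the case $\Omega = \mr^d$.

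Set $p_0 = \kappa p/(\kappa - p)$ and $q_0 = \kappa q/d$. The standard Gagliardo--Nirenberg inequality on $\mr^d$ yields, for a.e.\ $t$,
\begin{equation*}
\|u(t,\cdot)\|_{L_{p_0}(\mr^d)} \le N \|u(t,\cdot)\|_{L_p(\mr^d)}^{1-d/\kappa} \|Du(t,\cdot)\|_{L_p(\mr^d)}^{d/\kappa},
\end{equation*}
where the interpolation exponent $\theta = d/\kappa$ is forced by the scaling identity $1/p_0 = 1/p - \theta/d$. The endpoints $\kappa = \infty$ (where $(p_0,q_0) = (p,\infty)$ and the claim is immediate from the definition of the norm) and $\kappa = p > d$ (where $p_0 = \infty$ and Morrey's embedding combined with interpolation against $L_p$ provides the bound) are handled in the same spirit.

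Raising this pointwise bound to the $q_0$-th power, integrating over $t \in (S,T)$, and using that $q_0 \cdot d/\kappa = q$ by construction, I obtain
\begin{equation*}
\int_S^T \|u(t,\cdot)\|_{L_{p_0}(\mr^d)}^{q_0}\,dt \le N^{q_0}\Bigl(\operatorname*{ess\,sup}_{S<t<T}\|u(t,\cdot)\|_{L_p(\Omega)}\Bigr)^{q_0 - q} \|Du\|_{L_{p,q}((S,T)\times\Omega)}^{q}.
\end{equation*}
Taking $q_0$-th roots and applying the elementary inequality $A^{1-\alpha}B^\alpha \le A + B$ for $\alpha \in [0,1]$ produces the desired estimate.

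The main point requiring care -- really a matter of exponent bookkeeping rather than a deep obstacle -- is verifying that $\theta = d/\kappa$ is an admissible Gagliardo--Nirenberg exponent in every subcase of the hypothesis $\kappa \in [\max\{p,d\},\infty]$ with $\kappa > p$ when $p = d \ge 2$: the lower bound $\kappa \ge d$ ensures $\theta \in (0,1]$, while the strict inequality $\kappa > p$ when $p = d$ ensures $\theta < 1$, which is needed to avoid the failure of $W^{1,d}(\mr^d) \hookrightarrow L_\infty(\mr^d)$. Once these exponents are pinned down, the remainder of the argument is direct.
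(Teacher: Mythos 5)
Your proof is correct and follows essentially the same route as the paper: zero extension to $\mr^d$, the Gagliardo--Nirenberg inequality slice-wise in time with exponent $d/\kappa$, integration of the $q_0$-th power in time (equivalently taking the $L_{\kappa q/d}$ norm), and Young's inequality, with the endpoint cases $\kappa=\infty$ and $\kappa=p$ treated separately. The exponent bookkeeping, including why $\kappa>p$ is needed when $p=d\ge 2$, matches the paper's argument.
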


\begin{proof}
The case with $\kappa=\infty$ follows from the definition of $\mathring{V}_{p,q}((S, T) \times \Omega)$.
Assume that $\kappa<\infty$.
Considering the zero extension of $u$ outside $\Omega$ and by the interpolation inequality for functions in $\mr^d$ (see, for instance, \cite[Theorem 12.83]{MR3726909}), we have 
$$
\|u(t, \cdot)\|_{L_{\frac{\kappa p}{\kappa -p}}(\Omega)}\le N \|u(t, \cdot)\|_{L_{p}(\Omega)}^{1-\frac{d}{\kappa}}\|Du(t, \cdot)\|_{L_p(\Omega)}^{\frac{d}{\kappa}} 
$$
for almost every $t\in (S, T)$, where $N=N(d,p,\kappa)>0$.
Taking $L_{\frac{\kappa q}{d}}$ norm in $(S, T)$ for both sides, we obtain 
$$
\|u\|_{L_{\frac{\kappa p}{\kappa -p},\frac{\kappa q}{d}}((S, T) \times \Omega)} 
\leq  N \left( \operatorname*{ess\,sup}_{S<t<T} \| u(t, \cdot) \|_{L_p(\Omega)} \right)^{1- \frac{d}{\kappa}}   \|Du\|_{L_{p,q}((S, T) \times \Omega)}^{\frac{d}{\kappa}}. 
$$
We then get the desired estimate from Young's inequality.
The lemma is proved.
\end{proof}

\begin{corollary}
\label{cor0302_1}

Let $u\in \mathring{V}_{p,q}(\Omega_T)$, where $p,q\in [2,\infty)$.
Then we have 
\begin{equation*}
\| u \|_{L_{\frac{(p-1)p_1}{p_1-1}, \frac{(p-1)q_1}{q_1-1}}(\Omega_T)}  \leq N_*T^{N_{**}}  \| u \|_{V_{p,q}(\Omega_T) },
\end{equation*}
where $N_*=N_*(d,p,p_1)>0$ and $N_{**}=N_{**}(d,p,q,p_1,q_1)\ge 0$, provided that the pair $(p_1,q_1)$ satisfies \eqref{40111} and \eqref{401}.
Here, $\frac{(p-1)p_1}{p_1-1}=\infty$ if $p_1=1$  and $\frac{(p-1)q_1}{q_1-1}=\infty$ if $q_1=1$.
\end{corollary}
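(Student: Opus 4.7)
The plan is to apply Lemma \ref{multi} with a judicious choice of the parameter $\kappa$ and then pay a power of $T$ through H\"older's inequality in the time variable. Set
$$
\kappa^* := \frac{p(p-1)p_1}{p - p_1} \in [p, \infty],
$$
interpreted as $\kappa^* = \infty$ when $p_1 = p$, so that $\frac{\kappa^* p}{\kappa^* - p}$ coincides with the target spatial exponent $P := \frac{(p-1)p_1}{p_1 - 1}$. Once it is shown that $\kappa^*$ is admissible in Lemma \ref{multi} and that $Q^* := \kappa^* q / d$ satisfies $Q^* \ge Q := \frac{(p-1)q_1}{q_1 - 1}$, the lemma gives $\|u\|_{L_{P, Q^*}(\Omega_T)} \le N \|u\|_{V_{p,q}(\Omega_T)}$ with $N = N(d, p, p_1)$, and H\"older in $t$ delivers
$$
\|u\|_{L_{P, Q}(\Omega_T)} \le T^{1/Q - 1/Q^*}\,\|u\|_{L_{P, Q^*}(\Omega_T)},
$$
which is the desired bound with $N_{**} = 1/Q - 1/Q^* \ge 0$.

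The first step I would carry out is the admissibility check, namely $\kappa^* \ge \max\{p, d\}$ with strict inequality when $p = d \ge 2$. Combining \eqref{401} with $q_1 \le q$ yields $\tfrac{d}{p_1} \le 1 + \tfrac{d}{p}$, so $p_1 \ge \tfrac{pd}{p+d}$; inserting this lower bound into the formula for $\kappa^*$ and using $p \ge 2$ gives $\kappa^* \ge d$ after short algebra. The only remaining obstruction is the borderline $\kappa^* = p = d \ge 2$, which requires $p_1 = 1$ and $\tfrac{1}{p} + \tfrac{1}{d} \ge 1$, forcing $p = d = 2$; plugging this back into \eqref{401} with $p_1 \le p$ collapses the inequality to $q_1 = q$, i.e.\ precisely the triple excluded in \eqref{40111}.

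The second step is to verify $Q^* \ge Q$. (Observe that \eqref{401}, $p_1 \le p$, and $q \ge 2$ jointly force $q_1 > 1$, so $Q$ is finite.) Set $\alpha := d(1/p_1 - 1/p) \ge 0$ and $\beta := 2(1/q_1 - 1/q) \ge 0$, so that \eqref{401} reads $\alpha + \beta \le 1$, while $Q^* \ge Q$ is equivalent, after clearing denominators, to $\alpha + \tfrac{q}{2}\beta \le q - 1$. The latter follows from $\beta \le 1 - \alpha$ and $q \ge 2$ via
$$
\alpha + \tfrac{q}{2}\beta \le \alpha + \tfrac{q}{2}(1-\alpha) = \tfrac{q}{2} + \alpha\bigl(1 - \tfrac{q}{2}\bigr) \le \tfrac{q}{2} \le q - 1.
$$
The main obstacle is the admissibility check: the unique configuration in which Lemma \ref{multi} would be unusable is $\kappa^* = p = d \ge 2$, and the exclusion in \eqref{40111} is tailored to remove exactly this case. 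Once both admissibility and $Q^* \ge Q$ are in hand, the combination of Lemma \ref{multi} and H\"older in $t$ concludes the proof.
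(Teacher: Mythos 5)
Your argument is, in substance, the same as the paper's: the same choice $\kappa=\frac{pp_1(p-1)}{p-p_1}$ in Lemma \ref{multi}, the same two verifications ($\kappa\ge\max\{p,d\}$ with strictness at $p=d\ge2$, and $\kappa q/d\ge\frac{(p-1)q_1}{q_1-1}$), and the same H\"older-in-time step producing the factor $T^{N_{**}}$. Those computations are correct, including the observation that the borderline $\kappa=p=d\ge 2$ forces $p_1=1$, $p=d=2$, $q_1=q$, which is exactly the triple excluded by \eqref{40111}.

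There is, however, one genuine flaw: your parenthetical claim that \eqref{401}, $p_1\le p$, and $q\ge 2$ force $q_1>1$ is false. Taking $q_1=1$ in \eqref{401} only yields $d/p_1\le d/p+2/q-1\le d/p$, hence $p_1=p$ and then $q=2$; this configuration (e.g.\ $p_1=p$, $q=2$, $q_1=1$, any $d$) satisfies \eqref{40111} and \eqref{401}, and the corollary explicitly includes it through the convention $\frac{(p-1)q_1}{q_1-1}=\infty$ (note the corollary does not assume \eqref{4011}, which is what would rule out this endpoint). Your written proof excludes this case on the basis of the false deduction, and the subsequent steps (the ``clearing denominators'' equivalence and the H\"older exponent $1/Q-1/Q^*$) are formulated for finite $Q$, so the case is simply not covered. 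The fix is the one the paper uses: treat $q_1=1$ separately, where necessarily $p_1=p$, $q=2$, the target norm is $L_{p,\infty}(\Omega_T)$, and the estimate is immediate from the $\operatorname{ess\,sup}_{t}\|u(t,\cdot)\|_{L_p(\Omega)}$ term in the $V_{p,q}$ norm (equivalently, Lemma \ref{multi} with $\kappa=\infty$); for $q_1>1$ your argument stands as written.
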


\begin{proof}
Note that if $q_1=1$, by the fact that $q \geq 2$, that is, $d/p_1+2 \leq 2 + d/p$ and $p_1 \in [1,p]$, it holds that  
$$
p=p_1=\frac{(p-1)p_1}{p_1-1} \quad \text{and} \quad q=2.
$$
Thus, the case with $q_1=1$ follows from the definition of the space $\mathring{V}_{p,q}(\Omega_T)$. 

Assume that $q_1>1$, and set
$$
p_0 = \frac{(p-1)p_1}{p_1-1},   \quad q_0 = \frac{(p-1)q_1}{q_1-1},  ~ \  \textrm{ and } \ ~ \kappa= \frac{p_0 p}{p_0 -p}=\frac{pp_1(p-1)}{p-p_1}.
$$
Then we have $q_0\le \frac{\kappa q}{d}$.
Indeed, by \eqref{401} and $q\ge 2$, we get 
$$
 \frac{1}{q_0} - \frac{d}{\kappa q} = \frac{1}{p-1} \left( 1- \frac{1}{q_1} - \frac{1}{q} \left( \frac{d}{p_1} - \frac{d}{p} \right) \right)
\geq \frac{1}{p-1} \left(  1- \frac{1}{q_1} -\frac{1}{q} - \frac{2}{q^2} + \frac{2}{q q_1} \right) 
$$
$$
\geq \frac{1}{(p-1)q} \left(  q- \frac{2}{q} -1 - \frac{q-2}{q_1} \right) 
\geq \frac{1}{(p-1)q} \left(  q-2 - \frac{q-2}{q_1}   \right)  \geq 0. 
$$
Thus by H\"older's inequality and $p_0=\frac{\kappa p}{\kappa -p}$, we obtain 
\begin{equation*}	
\| u \|_{L_{p_0, q_0}(\Omega_T) }  \leq
T^{ \frac{1}{q_0} - \frac{d}{\kappa q}}   \| u \|_{L_{\frac{\kappa p}{\kappa -p},\frac{\kappa q}{d}}(\Omega_T) }.
\end{equation*}
Thanks to Lemma \ref{multi}, it is sufficient to show that $\kappa$ satisfies the conditions in the hypothesis of the lemma. 
Obviously, we have $\kappa \ge p$.
By \eqref{401} and $p\ge2$, it holds that
$$
\frac{1}{\kappa}=\frac{1}{p-1}\left(\frac{1}{p_1}-\frac{1}{p}\right)\le  \frac{1}{d}+\frac{2}{qd}-\frac{2}{q_1d}\le \frac{1}{d},
$$
which gives $\kappa \ge d$.
In particular, if $p=d>2$, then $\kappa>p$ because the first $``\le"$ in the above inequalities can be replaced by $``<"$.
If $p=d=2$, then by using the fact that $p_1>1$ (see \eqref{40111}), we have 
$$
\frac{1}{\kappa}=\frac{1}{p_1}-\frac{1}{2}<\frac{1}{2},
$$
which implies $\kappa>2$.
\end{proof}

In the proposition below we show that the solution $u$ from Theorem \ref{theo401} are in fact in $V_{p,q}(\Omega_T)$ if $p,q \in [2,\infty)$.
In the proof, we only use the  uniform ellipticity of the coefficients $a^{ij}$ once we know the estimate of $u$ in term of the $\mathfrak{H}(\Omega_T)$ norm.

\begin{proposition}		\label{190405@lem2}
Let $p, q \in [2, \infty)$. Then under the same assumptions as in Theorem \ref{theo401}, there exists a unique $u\in \mathring{V}_{p, q}(\Omega_T)$ satisfying \eqref{402} with $u(0, \cdot) = 0$ on $\Omega$.
Moreover, 
\begin{equation}		\label{190412@eq1}
\operatorname*{ess\, sup}_{ 0 <t<T}\|u(t, \cdot)\|_{L_{p}(\Omega)} 
\le  N \left(   \| g\|_{L_{p,q} (\Omega_{T})} +  \|f\|_{L_{p_1, q_1}(\Omega_{T})} \right),
\end{equation}
where $N=N(d, \delta, p, q, p_1, q_1, \ell_1, r_1, \ell_2, \ell_3, r_2, r_3, a^i, b^i, c, K, C, T)$.
\end{proposition}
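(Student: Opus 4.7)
The plan is to start with the unique solution $u \in \mathring{W}^{1,0}_{p,q}(\Omega_T)$ supplied by Theorem \ref{theo401}, which already satisfies \eqref{402} with $u(0,\cdot)=0$ on $\Omega$ and the estimate \eqref{403}. It then remains to upgrade $u$ to an element of $\mathring V_{p,q}(\Omega_T)$ by proving the sup-in-time bound \eqref{190412@eq1}. The natural tool, since $p, q \ge 2$, is an energy identity obtained by testing the equation against $|u|^{p-2}u \chi_{[0,\tau]}$. After justifying this via mollification in time and using $u(0,\cdot)=0$, one arrives at the identity
$$
\frac{1}{p}\|u(\tau,\cdot)\|_{L_p(\Omega)}^p + (p-1)\int_0^\tau\!\!\int_\Omega a^{ij}D_j u\, D_i u\, |u|^{p-2}\,dx\,dt = \text{RHS}(\tau),
$$
where $\text{RHS}(\tau)$ collects the contributions of $a^i u$, $b^i D_i u$, $cu$, $D_i g_i$, and $f$ after the corresponding integration by parts.

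The ellipticity condition \eqref{eq0402_02} gives a nonnegative lower bound $(p-1)\delta\int|Du|^2|u|^{p-2}$ on the second term of the LHS, which can absorb, via Cauchy--Schwarz with small weight, the pieces arising from $a^i$ and $g_i$ (namely $\int|a^i||Du||u|^{p-1}$ and $\int|g_i||Du||u|^{p-2}$). What is left is to bound the remaining five scalar terms $\int|a^i|^2|u|^p$, $\int|b^i||Du||u|^{p-1}$, $\int|c||u|^p$, $\int|g|^2|u|^{p-2}$, and $\int|f||u|^{p-1}$ by Hölder's inequality in space--time. The algebraic conditions on $(\ell_k,r_k)$ and $(p_1,q_1)$ from Section \ref{condi0302_1} were designed precisely so that the resulting Lebesgue space in which $|u|^{p-1}$ or $|u|^p$ must sit is controlled by the exponents appearing in Corollary \ref{cor0302_1}; for instance, the $f$ term gives
$$
\left|\int_0^\tau\!\!\int_\Omega f|u|^{p-2}u\,dx\,dt\right|
\le \|f\|_{L_{p_1,q_1}(\Omega_T)} \|u\|_{L_{(p-1)p_1/(p_1-1),\,(p-1)q_1/(q_1-1)}(\Omega_T)}^{p-1}
\le N T^{N_{**}} \|f\|_{L_{p_1,q_1}} \|u\|_{V_{p,q}(\Omega_T)}^{p-1}.
$$
Similar estimates hold for the other four terms, with the conjugate exponents of $(\ell_1,r_1),(\ell_2,r_2),(\ell_3,r_3)$ giving rise to mixed-norm spaces that Corollary \ref{cor0302_1} again controls by $\|u\|_{V_{p,q}(\Omega_T)}$.

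Combining these bounds with the $W^{1,0}_{p,q}$-estimate \eqref{403}, one obtains
$$
\sup_{0<\tau<T}\|u(\tau,\cdot)\|_{L_p(\Omega)}^p \le N\bigl(\|g\|_{L_{p,q}} + \|f\|_{L_{p_1,q_1}}\bigr)\bigl(\|u\|_{V_{p,q}(\Omega_T)}^{p-1} + \|u\|_{V_{p,q}(\Omega_T)}^p T^{\text{exp}}\bigr) + \text{(already bounded terms)}.
$$
Since $\|u\|_{V_{p,q}(\Omega_T)} = \sup_\tau\|u(\tau)\|_{L_p} + \|u\|_{W^{1,0}_{p,q}}$ and the $W^{1,0}_{p,q}$-norm is already under control by \eqref{403}, Young's inequality with conjugate exponents $p$ and $p/(p-1)$ lets us absorb $\|u\|_{V_{p,q}}^{p-1}$ into the left-hand side, yielding \eqref{190412@eq1}. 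Finally, $u\in\mathring V_{p,q}(\Omega_T)$ follows by approximating $g,f$ with smooth data $g^{(n)},f^{(n)}$, noting that the corresponding solutions $u^{(n)}$ are smooth and thus trivially lie in $\mathring V_{p,q}(\Omega_T)$, and applying the newly-proved estimate \eqref{190412@eq1} to $u-u^{(n)}$ to pass to the limit in $V_{p,q}$-norm.

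The main obstacle is the careful matching of Hölder exponents for the five lower-order terms: one must verify that in every case the algebraic constraints \eqref{40111}--\eqref{4011} together with the conditions on $(\ell_k,r_k)$ place the relevant power of $u$ in a mixed Lebesgue space where Corollary \ref{cor0302_1} applies, and that the borderline cases (e.g.\ $q_1=1$, or $r_3=1$, or $p=d$) do not break this scheme. The rigorous justification of testing with $|u|^{p-2}u$—via Steklov averages or a standard time-mollification preserving the zero initial condition and the lateral boundary condition—is routine but requires checking that one can commute the mollifier with the weak formulation \eqref{eq0310_01}.
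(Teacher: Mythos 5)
Your proposal is correct and follows essentially the same route as the paper: starting from the solution given by Theorem \ref{theo401}, one derives a Steklov-average/time-mollified energy identity with the test power $|u|^{p-2}u$, controls the $f$ and lower-order terms by H\"older's inequality together with Corollary \ref{cor0302_1}, and absorbs the $V_{p,q}$-quantity using the bound \eqref{403} and Young's inequality. Two minor points of difference: the paper keeps $a^i u$ inside $\tilde g_i$ and simply reuses the estimates (i)--(iii) from the proof of Theorem \ref{theo401} (so no separate bound on $\int |a^i|^2|u|^p$ is required), and it carries out the absorption at the level of the Steklov averages $u_h$, where $\operatorname{ess\,sup}_t \|u_h(t,\cdot)\|_{L_p(\Omega)}$ is finite a priori; also, your closing assertion that the approximating solutions are smooth is not justified under merely measurable $a^{ij}$ and unbounded lower-order coefficients, but that step is dispensable since the sup-in-time bound is obtained for $u$ itself in the limit $h\to 0$.
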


\begin{proof} By Theorem \ref{theo401}, we have a unique $u \in \mathring{W}_{p,q}^{1,0}(\Omega_T)$ satisfying \eqref{402} with $u(0, \cdot) = 0$ on $\Omega$.
We also know from the proof that $u \in \mathfrak{H}(\Omega_T)$.
Let $\tilde{g}_i = g_i - a^iu$, $f_1 = f$, $f_2 = -b^iD_iu$ and $f_3 = -cu$. We first extend the equation \eqref{402} from $\Omega_{T}$ to $(-1,T) \times \Omega$. 
Indeed, we extend $u$, $\tilde{g}_i$, $f_k$ to be zero for $t \le 0$ and $a^{ij} = \delta^{ij}$ for $t \le 0$. 
For the sake of simplicity, we still denote by $u$, $\tilde{g}_i$, $f_k$, $a^{ij}$ the extended ones.
Then one can check that $u \in \mathring{W}_{p, q}^{1, 0}((-1, T)\times \Omega)$ satisfies $u(-1, \cdot) = 0$ on $\Omega$ and
\begin{equation*}
-u_t + D_i(a^{ij}D_ju) = D_i\tilde{g}_i + \sum_{k=1}^{3}f_k \quad \textrm{in} \quad (-1, T) \times \Omega
\end{equation*}
in the sense of \eqref{eq0310_01}.

Denote by $v_h$ the Steklov average of a function $v$ with step size $h  \in (0,1)$, that  is, 
$$
v_h(t,x)=\frac{1}{h}\int_{t}^{t+h} v(s,x)\,ds.
$$
Then by following the argument in \cite[pp. 141--143]{MR0241822}, we have that
\begin{equation}		\label{190401@A1}
\begin{gathered}
\frac{p-1}{p}\int_\Omega |u_h(\tau, \cdot)|^p\,dx-\frac{p-1}{p}\int_\Omega |u_h(-1,\cdot)|^p\,dx\\
+(p-1)\int_{  (-1, \tau) \times \Omega  }(a^{ij}D_ju)_h D_i u_h |u_h|^{p-2} \, dx\, dt \\
=(p-1)\int_{ (-1, \tau) \times \Omega } (g_i)_h D_i u_h |u_h|^{p-2}\,dx\,dt-\sum_{k=1}^3 \int_{ (-1, \tau) \times \Omega } (f_k)_h u_h |u_h|^{p-2} \,dx \, dt
\end{gathered}
\end{equation}
for $\tau \in (-1,T)$ and $h \in (0, T-\tau)$.
Observe that 
$$
|\tilde{g}_h| |D u_h| |u_h|^{p-2}\le \frac{\delta }{2} |Du_h|^2|u_h|^{p-2}+N |\tilde{g}_h|^2 |u_h|^{p-2},
$$
and that 
$$
\begin{aligned}
&(a^{ij}D_ju)_h D_i u_h |u_h|^{p-2}\\
&=a^{ij} D_j u_h D_i u_h |u_h|^{p-2}+\left((a^{ij}D_j u)_h -a^{ij} D_j u_h\right)D_i u_h |u_h|^{p-2}\\
&\ge \delta |Du_h|^2|u_h|^{p-2}+\left((a^{ij}D_j u)_h -a^{ij} D_j u_h\right)D_i u_h |u_h|^{p-2}.
\end{aligned}
$$
Thus, from \eqref{190401@A1} with the fact that $\|u_h(-1, \cdot )\|_{L_{p}(\Omega)} = 0$ we get
\begin{equation}		\label{190401@A2}
\|u_h(\tau, \cdot)\|_{L_{p}(\Omega)}^p \le N(I_h^0+I_h^1+I_h^2)
\end{equation}
for any $\tau\in (-1,T)$ and $h \in (0, T-\tau)$,
where $N=N(d,\delta,p)$ and
\begin{align*}
I_h^0 &=\int_{(-1, \tau) \times \Omega}\left|(a^{ij}D_j u)_h -a^{ij} D_j u_h\right| |D_i u_h| |u_h|^{p-2}\,dx \, dt,
\\
I_h^1 &=\int_{(-1,\tau) \times \Omega} |\tilde{g}_h|^2|u_h|^{p-2}\,dx \, dt,
\\
I_h^2 &= \sum_{k=1}^3 \int_{(-1,\tau) \times \Omega}  | (f_k)_h|   |u_h|^{p-1} \, dx \, dt.
\end{align*}
For $I_h^0$, we have
$$
I_h^0 \le  \int_{-1}^{\tau} \|  (a^{ij}D_ju)_h(t, \cdot) - a^{ij}D_ju_h(t, \cdot)\|_{L_p(\Omega)} \| Du_h(t, \cdot)\|_{L_p(\Omega)}\| u_h(t, \cdot)\|_{L_p( \Omega)}^{p-2} \, dt
$$
$$
\le \left(\operatorname*{ess \, sup}_{-1<t <\tau} \Vert u_h(t, \cdot) \Vert_{L_p(\Omega)}\right)^{p-2}    \|  (a^{ij}D_j u)_h -a^{ij} D_j u_h  \|_{L_{p, q}}  \| Du_h \|_{L_{p, q'}},
$$
$$
\le   (T+1)^{1-\frac{2}{q}} \left(\operatorname*{ess \, sup}_{-1<t <\tau} \Vert u_h(t, \cdot) \Vert_{L_p(\Omega)}\right)^{p-2}    \|  (a^{ij}D_j u)_h -a^{ij} D_j u_h  \|_{L_{p, q}} \| Du_h \|_{L_{p, q}},
$$
where $L_{p,q} = L_{p,q}(\Omega_{-1,\tau})$ and $L_{p,q'} = L_{p,q'}(\Omega_{-1,\tau})$.
For $I_h^1$, we have
\begin{align*}
I_h^1 &\le \int_{-1}^{\tau} \| u_h(t, \cdot) \|_{L_p(\Omega)}^{p-2} \| \tilde{g}_h(t, \cdot) \|_{L_p(\Omega)}^2 dt 
\\
&\le (T+1)^{1 - \frac{2}{q}} \operatorname*{ess \, sup}_{-1<t <\tau} \Vert u_h(t, \cdot) \Vert_{L_p(\Omega)}^{p-2} \| \tilde{g}_h \|_{L_{p, q}(\Omega_{-1, \tau})}^2.
\end{align*}

To estimate $I_h^2$, we use H\"older's inequality and Corollary \ref{cor0302_1} to get
$$
\int_{ \Omega_{-1,\tau} } |(f_k)_h| \, |u_h|^{p-1} \, dx \, dt     \leq \| (f_k)_h \|_{L_{p_k,q_k}( \Omega_{-1,\tau} )} \| u_h \|_{L_{\frac{(p-1)p_k}{p_k - 1}, \frac{(p-1)q_k}{q_k - 1}}( \Omega_{-1,\tau} )}^{p-1}
$$
$$
\le N \|(f_k)_h\|_{L_{p_k,q_k}( \Omega_{-1,\tau} )} \| u_h \|_{V_{p,q}( \Omega_{-1,\tau} ) }^{p-1}
$$
where $N = N(d,p,q,p_k,q_k,T)$ and $k = 1, 2, 3$.

Combining \eqref{190401@A2} with the estimates of $I_h^0$, $I_h^1$, and $I_h^2$, we have
\begin{align}
& \|u_h(\tau, \cdot)\|_{L_{p}(\Omega)}^p \leq N \| u_h \|_{V_{p, q}(\Omega_{-1, \tau})}^{p-2}\| \tilde{g}_h \|_{L_{p, q}(\Omega_{-1, \tau})}^2 \nonumber
\\
& \quad + N \| u_h \|_{V_{p, q}(\Omega_{-1, \tau})}^{p-1} \sum_{k=1}^3 \|(f_k)_h\|_{L_{p_k,q_k}( \Omega_{-1,\tau} )} \nonumber
\\
& \quad + N \| u_h \|_{V_{p, q}(\Omega_{-1, \tau})}^{p-2} \|  (a^{ij}D_j u)_h -a^{ij} D_j u_h  \|_{L_{p, q}(\Omega_{-1, \tau})} \| Du_h \|_{L_{p, q}(\Omega_{-1, \tau})} \nonumber
\\
& \le \frac{1}{2}\| u_h \|_{V_{p, q}(\Omega_{-1, \tau})}^p \nonumber
\\
& \quad + N \left(   \| u_h \|_{W_{p, q}^{1, 0}(\Omega_{-1, \tau})}^p + \| \tilde{g}_h \|_{L_{p, q}(\Omega_{-1, \tau})}^p + \sum_{k=1}^3 \|(f_k)_h\|_{L_{p_k,q_k}( \Omega_{-1,\tau} )}^p   \right) \nonumber
\end{align}
where $N = N(d, \delta, p, q, p_k, q_k, T)$.
Since
$$
\lim_{h \to 0} \| Du_h \|_{L_{p,q} ( \Omega_{-1,\tau} )}^p = \| Du \|_{L_{p,q} ( \Omega_{-1,\tau} )}^p \le \| Du \|_{L_{p,q} ( \Omega_{T} )}^p,
$$
by letting $h \to 0$ and using the estimate (i)--(iii) in the proof of Theorem \ref{theo401} as well as the estimate \eqref{eq0604_01}, we arrive at \eqref{190412@eq1}. 
\end{proof}

\subsection{The conormal case}
As in the Dirichlet case, we first provide some embedding inequalities for the space $V_{p,q}$.
Note that in this case we assume that $\Omega$ is a John domain, the definition of which is given below.

Note that solutions $u \in V_{p,q}(\Omega_T)$ of \eqref{602}, with $u(0, \cdot) = 0$ on $\Omega$, 
are understood in an analogous way to the Dirichlet boundary value problem.
More precisely, we define solutions to the conormal boundary value problem using \eqref{eq0415_01} with test functions having (not necessarily) non-zero values on the lateral boundary of the parabolic domain.
As in the Dirichlet case, if $u \in V_{p,q}(\Omega_T)$,
then \eqref{eq0310_01} with $T$ is equivalent to \eqref{eq0415_01} with almost all $\tau \in (0,T)$.

\begin{definition} \label{john}
A bounded domain $\Omega$ in $\mr^d$ is a John domain with center $z_0 \in \Omega$ and constant $M \ge 1$ if for each $z \in \Omega\setminus \{z_0\}$, there is a rectifiable curve $\eta(z, z_0) \subset \Omega$ connecting $z$ and $z_0$ such that
\begin{equation*}
|\eta(z, x)| \le Md(x) \quad \textrm{for}\quad x \in \eta(z, z_0)
\end{equation*}
where $d(x) = \operatorname{dist}(x, \partial\Omega)$ and $|\eta(z, x)|$ denotes the length of the subcurve $\eta(z, x) \subset \eta(z, z_0)$ connecting $z$ and $x$.
\end{definition}

If there is no confusion, hereafter, we simply say ``John domain" instead of ``John domain with center $z_0$ and constant $M$".

\begin{remark}
	\label{john2}
Every Lipschitz domain is a John domain.
It is known that $\Omega$ is a John domain if it is a $(R_0, \gamma)$-Reifenberg flat domain. (See Definition \ref{rfd} for the precise definition.)
In this case, the constants $M$ and $d(z_0)$ may depend on $R_0$, $\gamma$ and $diam(\Omega)$. 	
Indeed, a Reifenberg flat domain is a uniform domain for sufficiently small $\gamma>0$ and consequently it is a John domain. 
We refer the reader to \cite{MR2186550, MR3186805} for more details.
\end{remark}

\begin{lemma} 
\label{multi2}
Let $\Omega$ be a John domain and $u\in V_{p,q}((S, T) \times \Omega)$, where $-\infty\le S<T\le \infty$ and $p,q\in [1,\infty)$.
Then for any $\kappa \in[\max\{p,d\}, \infty]$ satisfying $\kappa >p$ if $p \ge d \ge 2$, we have 
$$
\|u\|_{L_{\frac{\kappa p}{\kappa -p},\frac{\kappa q}{d}}((S, T) \times \Omega)} 
\le N \left(  \operatorname*{ess\,sup}_{S<t<T} \| u(t, \cdot) \|_{L_p(\Omega)} + \Vert u \Vert_{W_{p,q}^{1,0}\left( (S, T) \times \Omega \right)} \right),
$$
where $N=N(d,p,\kappa, |\Omega|, M, d(z_0) )>0$.
Here, $\frac{\kappa p}{\kappa -p}=p$ if $\kappa=\infty$ and $\frac{\kappa p}{\kappa -p}=\infty$ if $\kappa=p$.

\end{lemma}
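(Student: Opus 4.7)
The plan is to follow the proof of Lemma \ref{multi} essentially verbatim, with the zero-extension plus whole-space interpolation inequality replaced by an intrinsic Gagliardo--Nirenberg-type inequality on the John domain $\Omega$. As before, the case $\kappa = \infty$ is immediate from the definition of $V_{p,q}((S,T)\times\Omega)$, so I would concentrate on $\kappa \in [\max\{p,d\},\infty)$ with $\kappa > p$ when $p\ge d\ge 2$.

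The key pointwise-in-time inequality I would establish is
$$
\|v\|_{L_{\frac{\kappa p}{\kappa-p}}(\Omega)} \le N\, \|v\|_{L_p(\Omega)}^{1-\frac{d}{\kappa}}\bigl(\|v\|_{L_p(\Omega)}+\|Dv\|_{L_p(\Omega)}\bigr)^{\frac{d}{\kappa}}
$$
for $v\in W^{1,p}(\Omega)$, with $N=N(d,p,\kappa,|\Omega|,M,d(z_0))$. This follows from the Sobolev embedding $W^{1,p}(\Omega)\hookrightarrow L_s(\Omega)$ on John domains (e.g.\ via the Boman chain / $p$-Poincar\'e inequality, cf.\ Haj{\l}asz--Koskela), with $s=dp/(d-p)$ if $p<d$, any finite $s$ if $p=d$, and $s=\infty$ if $p>d$, interpolated with the $L_p(\Omega)$ norm via H\"older. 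The restriction $\kappa>p$ in the borderline regime $p\ge d\ge 2$ is precisely what makes the required Sobolev exponent $s=\kappa p/(\kappa-p)$ finite in the $p=d$ case and strictly less than $\infty$ in the $p>d$ case, so the constant $N$ stays finite.

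Applying the pointwise inequality to $u(t,\cdot)$ for a.e.\ $t\in(S,T)$, raising to the power $\kappa q/d$, and integrating in $t$, I would obtain
$$
\|u\|_{L_{\frac{\kappa p}{\kappa-p},\,\frac{\kappa q}{d}}((S,T)\times\Omega)} \le N\,\Bigl(\operatornamewithlimits{ess\,sup}_{S<t<T}\|u(t,\cdot)\|_{L_p(\Omega)}\Bigr)^{1-\frac{d}{\kappa}} \|u\|_{W^{1,0}_{p,q}((S,T)\times\Omega)}^{\frac{d}{\kappa}}.
$$
Young's inequality with conjugate exponents $\kappa/(\kappa-d)$ and $\kappa/d$ then splits the right-hand side into the desired sum, completing the proof.

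The main obstacle is justifying the John-domain Sobolev embedding in the borderline cases with the right quantitative dependence of $N$ on the geometric parameters $|\Omega|$, $M$, and $d(z_0)$; in particular, in the case $p=d$ one must use that $W^{1,d}(\Omega)$ embeds into every $L_s(\Omega)$ with $s<\infty$ (via a John--Nirenberg-type argument on the chain of cubes) rather than into $L_\infty$, and this is exactly why the hypothesis forces $\kappa>p$ whenever $p\ge d\ge 2$. Once the Sobolev embedding is in hand, the rest of the argument is a direct adaptation of Lemma \ref{multi}.
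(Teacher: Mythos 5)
Your overall skeleton (pointwise-in-time multiplicative inequality, then $L_{\kappa q/d}$ in time, then Young) is exactly the paper's, and the inequality you state for $v \in W^1_p(\Omega)$ is precisely the one the paper invokes (it cites \cite[Lemma 3.4]{MR4025954} for $d \ge 2$, and reduces $d=1$ to Lemma \ref{multi} by a cutoff extension, since $\Omega$ is then an interval). The gap is in your justification of that inequality: ``Sobolev embedding $W^{1,p}(\Omega)\hookrightarrow L_s(\Omega)$ interpolated with $L_p$ via H\"older'' produces the exponent $d/\kappa$ only in the subcritical case $p<d$, where $s=dp/(d-p)$ satisfies $1/p-1/s=1/d$. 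If $r=\kappa p/(\kappa-p)$ and you interpolate $\|v\|_{L_r}\le\|v\|_{L_p}^{1-\theta}\|v\|_{L_s}^{\theta}$, then $\theta=\frac{1/\kappa}{1/p-1/s}$; for $p=d$ every admissible finite $s$ gives $\theta>d/\kappa$, and for $p>d$ (with $s=\infty$) you get $\theta=p/\kappa>d/\kappa$. The hypothesis $\kappa>p$ when $p\ge d\ge 2$ keeps $r$ finite, but it does not repair this exponent mismatch, so your ``main obstacle'' paragraph misdiagnoses the difficulty as only a question of the constant's dependence on $|\Omega|, M, d(z_0)$.

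The mismatch is fatal for the next step, not cosmetic: with $\theta>d/\kappa$ you would need
\begin{equation*}
\Bigl\| \|u(t,\cdot)\|_{W^1_p(\Omega)}^{\theta} \Bigr\|_{L_{\kappa q/d}((S,T))}
= \|u\|_{W^{1,0}_{p,\theta\kappa q/d}((S,T)\times\Omega)}^{\theta},
\qquad \theta\kappa q/d > q,
\end{equation*}
and this higher time integrability of $\|u(t,\cdot)\|_{W^1_p}$ is not controlled by $\|u\|_{W^{1,0}_{p,q}}$ (there is no ess-sup control of $\|Du(t,\cdot)\|_{L_p}$, H\"older in time goes the wrong way, and $(S,T)$ may be unbounded). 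What is actually needed in the cases $p\ge d$ is the genuine Gagliardo--Nirenberg-type multiplicative inequality on a John domain with the dimensional exponent $d/\kappa$, i.e.
$\|v\|_{L_r(\Omega)}\le N\|v\|_{L_p(\Omega)}^{1-d/\kappa}\|v\|_{W^1_p(\Omega)}^{d/\kappa}$;
this does not follow from an embedding plus H\"older (note also that a John domain need not be a $W^{1,p}$-extension domain, so you cannot simply extend to $\mathbb{R}^d$ and use the whole-space inequality as in Lemma \ref{multi}). Either prove this multiplicative inequality directly (e.g.\ by a chain/representation-formula argument) or cite it, as the paper does with \cite[Lemma 3.4]{MR4025954}; you should also say how you treat $d=1$, which the paper handles separately.
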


\begin{proof}
The case with $\kappa=\infty$ follows from the definition of $V_{p,q}((S, T) \times \Omega)$.
Assume that $\kappa<\infty$ and $d\ge2$.
By \cite[Lemma 3.4]{MR4025954}, we have 
$$
\|u(t, \cdot)\|_{L_{\frac{\kappa p}{\kappa -p}}(\Omega)}\le N \|u(t, \cdot)\|_{L_{p}(\Omega)}^{1-\frac{d}{\kappa}}\|u(t, \cdot)\|_{W_p^1(\Omega)}^{\frac{d}{\kappa}},
$$
for almost every $t\in (S, T)$, where $N=N(d,p,\kappa, M, d(z_0), | \Omega |)>0$.
Taking $L_{\frac{\kappa q}{d}}$ norm in $(S, T)$ for both sides, we obtain 
$$
\|u\|_{L_{\frac{\kappa p}{\kappa -p},\frac{\kappa q}{d}}((S, T) \times \Omega)} 
\leq  N \left( \operatorname*{ess\,sup}_{S<t<T} \| u(t, \cdot) \|_{L_p(\Omega)} \right)^{1- \frac{d}{\kappa}}   \|u\|_{W_{p,q}^{1,0}((S, T) \times \Omega)}^{\frac{d}{\kappa}}. 
$$
We then get the desired estimate from Young's inequality.

If $d = 1$, note that $\Omega$ is an interval, say $(0, 1)$. 
Then we extend $u \in V_{p,q}((S, T) \times \Omega) = V_{p,q}((S, T) \times (0, 1))$ to $v \in V_{p,q}((S, T) \times (-1, 2))$ 
by an appropriate extension with multiplying a cut-off function $\eta$ whose support is in $(-1, 2)$ and $\eta = 1$ on $(0, 1)$.
Then, from the case $d=1$ of Lemma \ref{multi}, we easily obtain
\begin{align*}
&\|u\|_{L_{\frac{\kappa p}{\kappa -p},\frac{\kappa q}{d}}((S, T) \times (0, 1))} \le \|v\|_{L_{\frac{\kappa p}{\kappa -p},\frac{\kappa q}{d}}((S, T) \times (-1, 2))}   
\\
&\le N(p, \kappa) \left( \operatorname*{ess\,sup}_{S<t<T} \| v(t, \cdot) \|_{L_p((-1, 2))} +   \|Dv\|_{L_{p,q}((S, T) \times (-1, 2))} \right)  
\\
&\le N \left( \operatorname*{ess\,sup}_{S<t<T} \| u(t, \cdot) \|_{L_p((0, 1))} +   \|u\|_{W_{p,q}^{1,0}((S, T) \times (0, 1))} \right).
\end{align*}
By dilation and translation, we also obtain the same estimate for $\Omega = (a, b)$, 
with the constant $N$ depending on $b-a = |\Omega|$, as in the case $d \ge 2$.
The lemma is proved.
\end{proof}

\begin{corollary}
\label{cor0502}

Let $\Omega$ be a John domain and $u\in V_{p,q}(\Omega_T)$, where $p,q\in [2,\infty)$. Then we have 
\begin{equation*}
\| u \|_{L_{\frac{(p-1)p_1}{p_1-1}, \frac{(p-1)q_1}{q_1-1}}(\Omega_T)}  \leq N_*T^{N_{**}}  \| u \|_{V_{p,q}(\Omega_T) },
\end{equation*}
where $N_*=N_*(d,p,p_1, |\Omega|, M, d(z_0))>0$ and $N_{**}=N_{**}(d,p,q,p_1,q_1)\ge 0$, provided that the pair $(p_1,q_1)$ satisfies \eqref{40111} and \eqref{401}. 
Here, $\frac{(p-1)p_1}{p_1-1}=\infty$ if $p_1=1$  and $\frac{(p-1)q_1}{q_1-1}=\infty$ if $q_1=1$.

\end{corollary}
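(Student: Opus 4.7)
The plan is to mirror the proof of Corollary \ref{cor0302_1} with Lemma \ref{multi2} replacing Lemma \ref{multi}. First I would dispose of the boundary case $q_1 = 1$: combining $q_1 \le q$, $p_1 \le p$, and \eqref{401} with $q \ge 2$ forces $p_1 = p$ and $q = 2$, so that the target space collapses to $L_{p,\infty}(\Omega_T)$ and the estimate is immediate from the definition of $V_{p,q}(\Omega_T)$.

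For $q_1 > 1$, I set
$$
p_0 = \frac{(p-1)p_1}{p_1-1}, \qquad q_0 = \frac{(p-1)q_1}{q_1-1}, \qquad \kappa = \frac{p_0 p}{p_0-p} = \frac{p p_1 (p-1)}{p - p_1},
$$
and observe that the computation from Corollary \ref{cor0302_1} giving $q_0 \le \kappa q/d$ uses only \eqref{401} and $q \ge 2$, so it carries over verbatim. H\"older's inequality in the time variable then reduces matters to estimating $\|u\|_{L_{\kappa p/(\kappa-p),\,\kappa q/d}(\Omega_T)}$, which Lemma \ref{multi2} controls by $\|u\|_{V_{p,q}(\Omega_T)}$; the additional dependence of the constant on $|\Omega|, M, d(z_0)$ is simply inherited from Lemma \ref{multi2}.

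The main obstacle is to verify that $\kappa$ meets the hypotheses of Lemma \ref{multi2}, in particular the strengthened requirement $\kappa > p$ whenever $p \ge d \ge 2$ (as opposed to only $p = d \ge 2$ in Lemma \ref{multi}). From the identity
$$
\frac{1}{\kappa} = \frac{1}{p-1}\left(\frac{1}{p_1} - \frac{1}{p}\right),
$$
the bound $\kappa \ge d$ and the borderline subcases $p = d \ge 2$ are handled exactly as in Corollary \ref{cor0302_1} (for $p = d = 2$ one invokes the exclusion in \eqref{40111} to force $p_1 > 1$; for $p = d > 2$ one uses $1/(p-1) < 1$). The new subcase is $p > d \ge 2$, where $\kappa > p$ is equivalent to $p_1 > 1$. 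I would rule out $p_1 = 1$ by substituting it into \eqref{401}: the resulting inequality $2/q_1 \le 1 - d(p-1)/p + 2/q$ has strictly negative right-hand side when $d \ge 3$ (since $p > d$ and $q \ge 2$), and forces $q_1 > q$ when $d = 2$, contradicting $q_1 \in [1,q]$ in either situation. This closes the only novel verification; the remainder of the argument is identical to that of Corollary \ref{cor0302_1}.
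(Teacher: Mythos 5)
Your proposal is correct and follows the paper's own route: the same H\"older-in-time reduction to the $L_{\frac{\kappa p}{\kappa-p},\frac{\kappa q}{d}}$ norm with $\kappa=\frac{pp_1(p-1)}{p-p_1}$, Lemma \ref{multi2} in place of Lemma \ref{multi}, and a check that $\kappa\ge \max\{p,d\}$ with $\kappa>p$ when $p\ge d\ge 2$. Your explicit exclusion of $p_1=1$ in the new subcase $p>d\ge2$ (via \eqref{401} and $q_1\le q$, with \eqref{40111} only needed when $p=d/(d-1)$) simply spells out the detail the paper compresses into one sentence, so there is no substantive difference.
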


\begin{proof}
By the same reason in the proof of Corollary \ref{cor0302_1}, we obtain 
\begin{equation*}		
\| u \|_{L_{p_0, q_0}(\Omega_T) }  \leq
T^{ \frac{1}{q_0} - \frac{d}{\kappa q}}   \| u \|_{L_{\frac{\kappa p}{\kappa -p},\frac{\kappa q}{d}}(\Omega_T) }.
\end{equation*}
Thanks to Lemma \ref{multi2}, it is sufficient to show that $\kappa = \frac{pp_1(p-1)}{p-p_1}$ satisfies the hypothesis of Lemma \ref{multi2}. 
Obviously, we have $\kappa \ge p$ and $\kappa \ge d$ by the relation \eqref{401} and the fact that $p\ge2$. Also if $d \ge 2$, since \eqref{40111} implies $p_1 > 1$,  we have $\kappa > p$.
\end{proof}

\begin{proposition}		\label{lem0603_1}

Let $p, q \in [2, \infty)$ and $\Omega$ be a John domain. Then under the same assumptions as in Theorem \ref{conormal}, there exists a unique $u\in {V}_{p, q}(\Omega_T)$ satisfying \eqref{602} with $u(0, \cdot) = 0$ on $\Omega$.
Moreover, 
\begin{equation*}
\operatorname*{ess\, sup}_{ 0 <t<T}\|u(t, \cdot)\|_{L_{p}(\Omega)} 
\le  N \left(   \| g\|_{L_{p,q} (\Omega_{T})} +  \|f\|_{L_{p_1, q_1}(\Omega_{T})} \right),
\end{equation*}
where $N=N(d, \delta, p, q, p_1, q_1,\ell_1, r_1, \ell_2, \ell_3, r_2, r_3, a^i, b^i, c, K, C, T, |\Omega|, d(z_0), M)$.
\end{proposition}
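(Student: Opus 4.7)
The plan is to closely follow the argument for Proposition \ref{190405@lem2}, replacing the Dirichlet ingredients by their conormal analogues. Applying Theorem \ref{conormal} first yields a unique $u\in W_{p,q}^{1,0}(\Omega_T)$ solving \eqref{602} with $u(0,\cdot)=0$ on $\Omega$, and the argument of that proof shows in addition that $u\in\mathfrak{H}(\Omega_T)$, together with the $W_{p,q}^{1,0}$--estimate in terms of $\|g\|_{L_{p,q}(\Omega_T)}$ and $\|f\|_{L_{p_1,q_1}(\Omega_T)}$. As in the proof of Proposition \ref{190405@lem2}, I would extend $u$, $\tilde g_i=g_i-a^i u$ and the $f_k$'s (with $f_1=f$, $f_2=-b^iD_iu$, $f_3=-cu$) by zero for $t\le 0$ and set $a^{ij}=\delta^{ij}$ for $t\le 0$, so that the extended function solves the same conormal problem on $(-1,T)\times\Omega$ with $u(-1,\cdot)=0$.

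Next I would Steklov-average in time (step size $h\in(0,T-\tau)$) and test the weak conormal formulation against $|u_h|^{p-2}u_h$; since test functions for the conormal problem need not vanish on the lateral boundary, this choice is admissible, and the conormal condition makes the lateral boundary integral arising from integration by parts disappear. The resulting identity is exactly \eqref{190401@A1}, so the same manipulations (splitting $(a^{ij}D_ju)_h D_iu_h=a^{ij}D_ju_h\,D_iu_h+\text{commutator}$, then using \eqref{eq0402_02}) yield
$$
\|u_h(\tau,\cdot)\|_{L_p(\Omega)}^p\le N\bigl(I_h^0+I_h^1+I_h^2\bigr)
$$
with $I_h^0,I_h^1,I_h^2$ defined on $(-1,\tau)\times\Omega$ as in Proposition \ref{190405@lem2}.

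The bounds on $I_h^0$ and $I_h^1$ carry over verbatim because they only use H\"older's inequality and the $V_{p,q}$ definition. For $I_h^2$, the crucial step where the Dirichlet proof invokes Corollary \ref{cor0302_1} is now replaced by Corollary \ref{cor0502}, which is exactly the John-domain analogue; this is the only place where the geometric hypothesis on $\Omega$ enters and it furnishes
$$
\int_{\Omega_{-1,\tau}}|(f_k)_h|\,|u_h|^{p-1}\,dx\,dt\le N\|(f_k)_h\|_{L_{p_k,q_k}(\Omega_{-1,\tau})}\|u_h\|_{V_{p,q}(\Omega_{-1,\tau})}^{p-1}
$$
for $k=1,2,3$, with $N$ depending on $|\Omega|$, $M$ and $d(z_0)$. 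Combining the three estimates, using Young's inequality to absorb a fraction of $\|u_h\|_{V_{p,q}}^p$ into the left (after taking the essential supremum in $\tau$), then letting $h\to 0$ and invoking the commutator estimate $\|(a^{ij}D_ju)_h-a^{ij}D_ju_h\|_{L_{p,q}}\to 0$, will give
$$
\operatorname*{ess\,sup}_{0<t<T}\|u(t,\cdot)\|_{L_p(\Omega)}^p\le N\Bigl(\|u\|_{W_{p,q}^{1,0}(\Omega_T)}^p+\|\tilde g\|_{L_{p,q}(\Omega_T)}^p+\sum_{k=1}^3\|f_k\|_{L_{p_k,q_k}(\Omega_T)}^p\Bigr).
$$
Finally, bounding $\|\tilde g\|$ and $\|f_k\|$ by the estimates (i)--(iii) from the proof of Theorem \ref{theo401} (identical in the conormal setting) and invoking the $\mathfrak{H}(\Omega_T)$--estimate analogous to \eqref{eq0604_01} delivered by Theorem \ref{conormal} yields the claimed inequality. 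The uniqueness in $V_{p,q}(\Omega_T)$ follows from uniqueness in $W_{p,q}^{1,0}(\Omega_T)$ and the inclusion $V_{p,q}\subset W_{p,q}^{1,0}$.

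The main obstacle I anticipate is the verification that the lateral boundary contribution in the Steklov-averaged energy identity indeed vanishes; this requires checking that $|u_h|^{p-2}u_h$ is a legitimate test function for the conormal weak formulation (which reduces to an approximation argument since $u\in W_{p,q}^{1,0}(\Omega_T)$ with $p\ge 2$ implies $|u|^{p-2}u$ has integrable gradient), and that the conormal term $\nu^i a^{ij}D_j u+\nu^i a^i u=\nu^i g_i$ on $(0,T)\times\partial\Omega$ cancels the boundary integral produced by integration by parts; once this is settled, the remaining estimates are direct translations of the Dirichlet argument.
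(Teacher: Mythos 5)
Your proposal is correct and follows essentially the same route as the paper: the paper's proof of Proposition \ref{lem0603_1} simply repeats the argument of Proposition \ref{190405@lem2}, with Corollary \ref{cor0502} replacing Corollary \ref{cor0302_1} in the estimate of $I_h^2$, which is exactly your plan. Your additional remarks on the admissibility of the test function $|u_h|^{p-2}u_h$ and the vanishing of the lateral boundary term under the conormal condition only spell out details the paper delegates to the argument of \cite[pp.~141--143]{MR0241822}.
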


\begin{proof}
The proof is almost the same as that of Proposition \ref{190405@lem2}. Indeed, we use Corollary \ref{cor0502} instead of Corollary \ref{cor0302_1} to estimate $I_h^2$ apearing in the proof of Proposition \ref{190405@lem2}.
\end{proof}

\section{Appendix}
\label{appendix}

In this section, we illustrate some sufficient conditions on $\Omega$ (or on $\partial \Omega$) so that the domain satisfies Assumption \ref{dom} ($\Omega$).
We also give brief remarks about the parameters on which the constants depend in the main theorems as well as the elliptic case with unbounded coefficients.
Throughout the section, we use the following notation.
$$
\Omega(x_0,\rho) = \Omega \cap B_{\rho}(x_0) \quad \textrm{and} \quad  \Omega(\rho) = \Omega \cap B_{\rho}(0).
$$

\begin{definition}[$R_0, \gamma$] \label{rfd}
We say $\Omega \subset \mr^d$ is $(R_0, \gamma)$-Reifenberg flat if there exists a positive constant $R_0$ such that the following holds: for any $x_0 \in \Omega$ and $R \in (0, R_0]$, there is a coordinate system depending on $x_0$ and $R \in (0, R_0]$ such that in the new coordinate system, we have
\begin{equation}
	\label{rfd2}
\{y : x_{01} + \gamma R < y_1\} \cap B_R(x_0) \subset \Omega(x_0, R) \subset \{y : x_{01} - \gamma R < y_1\} \cap B_R(x_0),
\end{equation}
where $x_{01}$ is the first coordinate of $x_0$ in the new coordinate system.
\end{definition} 

We remark that this definition is meaningful when the positive number $\gamma$ is sufficiently small, such as $\gamma \in [0, 1/48]$.

\begin{theorem}[Embedding in $(0, T) \times \Omega$]
	\label{thm602}
Let $p, q \in [1, \infty]$ and $\Omega$ be a bounded $(R_0, \gamma)$-Reifenberg flat domain in $\mr^d$ with $\gamma \le 1/48$. Also let $p_0 \in [p,\infty]$, $q_0 \in [q,\infty]$, $p_k \in (0,\infty]$, $q_k \in (0,\infty]$, $k=1,\ldots,m$, satisfy either the condition (i) or (ii) in Assumption \ref{dom}.
Then for $u \in W_{p, q}^{1, 0}(\Omega_T)$, if $u_t = D_ig_i + \sum_{k=1}^m f_k$ where $g = (g_1, \ldots, g_d) \in \left( L_{p, q}(\Omega_T) \right)^d$ and $f_k \in L_{p_k, q_k}(\Omega_T)$, $k = 1, \ldots, m$, we have
\begin{equation} \label{imb2}
\|u\|_{L_{p_0,q_0}(\Omega_T)} \le N\left( \|u\|_{W_{p,q}^{1, 0}(\Omega_T)} + \|g\|_{L_{p, q}(\Omega_T)} + \sum_{k=1}^m \| f_k \|_{L_{p_k, q_k}(\Omega_T)} \right)
\end{equation}
where $N = N(d, p, q, p_0, q_0, p_k, q_k, m, R_0, \operatorname{diam}(\Omega), T)$. 
\end{theorem}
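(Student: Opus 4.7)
The plan is to reduce the embedding \eqref{imb2} on a Reifenberg flat domain to the corresponding embeddings on the whole space and on the half space (both of which I assume are established separately earlier in this Appendix), via a partition-of-unity localization. By standard approximation it suffices to prove \eqref{imb2} for smooth data. Since $\Omega$ is bounded, I would cover $\overline\Omega$ by finitely many balls $\{B_{R_0/8}(x_\alpha)\}_{\alpha=1}^{M}$ with $M=M(d,R_0,\mathrm{diam}(\Omega))$, where each center $x_\alpha$ is either interior to $\Omega$ (with $B_{R_0/4}(x_\alpha)\subset\Omega$) or lies on $\partial\Omega$. Let $\{\phi_\alpha\}$ be a subordinate smooth partition of unity with $|D\phi_\alpha|\le C(d)/R_0$. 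Each localized function $u_\alpha:=u\phi_\alpha$ then satisfies
$$
(u_\alpha)_t = D_i(\phi_\alpha g_i) + \bigl(-g_i\,D_i\phi_\alpha\bigr) + \sum_{k=1}^m \phi_\alpha f_k
$$
in the distribution sense. The new $L_{p,q}$ term $-g_i D_i\phi_\alpha$ is admissible as an additional $(p_k,q_k)$-type term with $(p_k,q_k)=(p,q)$ by Remark \ref{rem0602_1}, so it suffices to bound each $\|u_\alpha\|_{L_{p_0,q_0}(\Omega_T)}$ separately.

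For interior balls, I would extend $u_\alpha$ and the right-hand side by zero to $\mathbb{R}^d_T$; since $u_\alpha$ has compact support in $\Omega$, the extension lies in $W^{1,0}_{p,q}(\mathbb{R}^d_T)$ and satisfies the same equation in the distribution sense, so the whole-space embedding provides the bound. For boundary balls, I would invoke the Reifenberg flatness \eqref{rfd2} at $x_\alpha\in\partial\Omega$ to fix a coordinate system in which
$$
\{y_1>\gamma R_0\}\cap B_{R_0}(x_\alpha)\ \subset\ \Omega\cap B_{R_0}(x_\alpha)\ \subset\ \{y_1>-\gamma R_0\}\cap B_{R_0}(x_\alpha).
$$
After translating by $\gamma R_0$ in $y_1$, the piece of domain intersected with $B_{R_0}(x_\alpha)$ sits inside the half-space $\mathbb{R}^d_+=\{y_1>0\}$ and contains $\{y_1>2\gamma R_0\}\cap B_{R_0}$. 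I would extend $u_\alpha$ to the thin slab $(\{0<y_1\le 2\gamma R_0\}\cap B_{R_0})\setminus\Omega$ by reflection across the hyperplane $\{y_1=2\gamma R_0\}$; since reflected points fall inside the inner half-ball $\{y_1>2\gamma R_0\}\cap B_{R_0}\subset\Omega$, this yields $\tilde u_\alpha\in W^{1,0}_{p,q}(\mathbb{R}^d_{+,T})$. Zero-extending outside $B_{R_0}(x_\alpha)$ is harmless because $u_\alpha$ is supported in $B_{R_0/8}(x_\alpha)$. Reflection commutes with $\partial_t$ and preserves divergence structure up to signs, so $(\tilde u_\alpha)_t$ still has the form $D_i\tilde g_i+\sum_k\tilde f_k$ with norms controlled by the reflection-extended data. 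Applying Theorem \ref{thm702} on the half-space then bounds $\|\tilde u_\alpha\|_{L_{p_0,q_0}(\mathbb{R}^d_{+,T})}$, hence $\|u_\alpha\|_{L_{p_0,q_0}(\Omega_T)}$. Summing over $\alpha$ produces \eqref{imb2}.

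The main obstacle is precisely the boundary extension: Reifenberg flat domains are in general more irregular than Lipschitz domains, so classical Stein-type extensions are unavailable, but the near-flatness quantified by \eqref{rfd2} rescues us. The delicate point is choosing the reflection hyperplane far enough inside $\Omega$ (here $\{y_1=2\gamma R_0\}$ rather than $\{y_1=0\}$) so that reflected points from the extension slab genuinely lie in the region where $u_\alpha$ is defined; this is exactly what the upper inclusion in \eqref{rfd2} guarantees. The resulting constant $N$ in \eqref{imb2} inherits dependence on $d,p,q,(p_0,q_0),(p_k,q_k),m$ from the whole-space and half-space embeddings, on $R_0$ and $\gamma$ through the bound $|D\phi_\alpha|\le C/R_0$ and the reflection distance $2\gamma R_0$, on $\mathrm{diam}(\Omega)$ through the covering cardinality $M$, and on $T$ through Hölder's inequality used to match time norms on the bounded interval $(0,T)$.
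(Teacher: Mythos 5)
Your localization and the interior step are fine: multiplying by a cutoff compactly supported in $\Omega$ and extending by zero does preserve both the $W^{1,0}_{p,q}$ membership and the distributional identity, and the extra term $g_iD_i\phi_\alpha$ is indeed admissible. The gap is in the boundary step. The function you build by keeping $u_\alpha$ on $\Omega\cap B_{R_0}$ and assigning reflected values (reflection across $\{y_1=2\gamma R_0\}$) on the slab $\{0<y_1\le 2\gamma R_0\}\cap B_{R_0}\setminus\Omega$ is glued across the interface $\partial\Omega\cap\{0<y_1<2\gamma R_0\}$, and there the two definitions do not match: on the $\Omega$-side the values are those of $u\phi_\alpha$ near the boundary, on the other side they are values of $u\phi_\alpha$ taken at points at height $\ge 2\gamma R_0$, far from $\partial\Omega$. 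A jump across this set destroys $\tilde u_\alpha\in W^{1,0}_{p,q}$, and a Reifenberg flat boundary is in general non-Lipschitz and non-rectifiable, so there is no trace-matching or surface-measure argument to repair it. The same mismatch ruins the distributional identity for $(\tilde u_\alpha)_t$: testing against functions that do not vanish near $\partial\Omega$ (which is now interior to the half-ball) produces interface terms that are not of the form $D_i\tilde g_i+\sum_k\tilde f_k$ with mixed-norm data; the paper's Remark \ref{rem0430_1} points out exactly this obstruction even for the simpler zero extension of a function vanishing on the lateral boundary. The alternative of redefining $u$ on the whole slab by even reflection avoids the jump but then the resulting function no longer agrees with $u$ on $\Omega\cap\{0<y_1<2\gamma R_0\}$, which is precisely the region near the rough boundary that the estimate must control, so nothing is gained. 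In short, there is no extension across a Reifenberg boundary compatible with both $W^{1,0}_{p,q}$ and the structure of $u_t$, and the proposal does not supply one.

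For comparison, the paper avoids any space-time extension across $\partial\Omega$. For $q_0=q$ it works slicewise in time: $u(t,\cdot)\in W^1_p(\Omega)$, $\Omega$ is a Sobolev extension domain in the purely spatial (Jones) sense, and the spatial embedding is integrated in $t$. For $q_0>q$ it proves an intrinsic local boundary estimate (Lemma \ref{lem0805_1}): using the Reifenberg condition \eqref{rfd2} one constructs John-type paths staying inside $\Omega$ from points of $\Omega(x_0,R)$ to an interior ball, represents $u(t,x)$ minus a space-time average along these paths, uses the equation only against test functions compactly supported in $\Omega$ (so no extension is ever needed), and closes the estimate with Young's convolution inequality and the Hardy--Littlewood--Sobolev inequality in the time variable; an analogous interior estimate (Lemma \ref{lem0805_2}) and a spatial partition of unity then give \eqref{imb2}. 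If you want to salvage your strategy, you would essentially have to reprove such an interior-path representation near the boundary, which is the content of Lemma \ref{lem0805_1} rather than a reflection argument; note also that Theorem \ref{thm702} for $\mr^d$ and $\mr^d_+$ is itself proved by this path method, not quoted as a prior black box.
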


For the proof of Theorem \ref{thm602}, we need the following boundary and interior estimates.

\begin{lemma}[Boundary estimates]
							\label{lem0805_1}
Let $p, q \in [1,\infty]$ and $\Omega \subset \mr^d$ be a $(R_0, 1/48)$-Reifenberg flat domain, $x_0 \in \partial\Omega$, and $R \in (0, R_0/4]$.
Also let $p_0 \in [p,\infty]$, $q_0 \in (q,\infty]$, $p_k \in (0,\infty]$, $q_k \in (0,\infty]$, $k=1,\ldots,m$, satisfy the condition (ii) in Assumption \ref{dom}.
Then for $u \in W_{p, q}^{1, 0}((0, T) \times \Omega(x_0, 2R))$, if $u_t = D_ig_i + \sum_{k=1}^m f_k$ where $g = (g_1, \ldots, g_d) \in \left( L_{p, q}((0, T) \times \Omega(x_0, 2R)) \right)^d$ and $f_k \in L_{p_k, q_k}((0, T) \times \Omega(x_0, 2R))$, $k = 1, \ldots, m$, we have
\begin{multline*}
\|u\|_{L_{p_0,q_0}((0, T) \times \Omega(x_0, R))} \leq N \|u\|_{W_{p,q}^{1, 0}((0, T) \times \Omega(x_0, 2R))} 
\\ +  N \|g\|_{L_{p, q}((0, T) \times \Omega(x_0, 2R))} +  N\sum_{k=1}^m \| f_k \|_{L_{p_k, q_k}((0, T) \times \Omega(x_0, 2R))},
\end{multline*}
where $N = N(d, p, q, p_0, q_0, p_k, q_k, m, R, T)$.
\end{lemma}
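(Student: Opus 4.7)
The plan is to reduce the boundary estimate to the half-space embedding of Theorem \ref{thm702} through a cutoff-and-extension argument that exploits the Reifenberg-flat structure of $\partial\Omega$. First I would localize: fix $\eta \in C^\infty_0(B_{3R/2}(x_0))$ with $\eta \equiv 1$ on $B_R(x_0)$ and $|D\eta| \le C/R$, and set $v := \eta u$. A direct computation yields
\begin{equation*}
v_t \;=\; D_i(\eta g_i) \;-\; (D_i\eta)\,g_i \;+\; \sum_{k=1}^{m} \eta f_k
\qquad \text{in } (0,T)\times\Omega(x_0,2R),
\end{equation*}
so $v$ satisfies an equation of the same type, with $\tilde g_i := \eta g_i \in L_{p,q}$, an additional source $-(D_i\eta) g_i \in L_{p,q}$ (which, by Remark \ref{rem0602_1}, can be absorbed into the list of $f_k$-terms with exponents $(p,q)$), and $\eta f_k \in L_{p_k,q_k}$; the corresponding norms are bounded by $C(1 + R^{-1})$ times the data norms on $(0,T)\times\Omega(x_0,2R)$. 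Since $v \equiv u$ on $\Omega(x_0,R)$, it suffices to estimate $\|v\|_{L_{p_0,q_0}}$.

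Next, using $2R \le R_0/2$, Definition \ref{rfd} applied at scale $2R$ yields a coordinate system in which, after translating $x_0$ to the origin,
\begin{equation*}
\{y_1 > 4\gamma R\}\cap B_{2R} \;\subset\; \Omega(x_0,2R) \;\subset\; \{y_1 > -4\gamma R\}\cap B_{2R},
\end{equation*}
so $\Omega(x_0,2R)$ differs from the half-ball $\{y_1 > 0\}\cap B_{2R}$ by a layer of thickness $O(\gamma R)$. For $\gamma \le 1/48$, $\Omega(x_0,2R)$ is then a Jones $(\varepsilon,\delta)$-domain with uniform constants, and hence admits a bounded linear Sobolev extension $E: W^{1,r}(\Omega(x_0,2R)) \to W^{1,r}(\mr^d)$ whose operator norm is uniform in $r \in [1,\infty]$. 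Applying $E$ for each fixed $t$ to $v(t,\cdot)$ and extending $\tilde g_i$ and each $\tilde f_k$ by zero outside $\Omega(x_0,2R)$ produces $\bar v \in W^{1,0}_{p,q}(\mr^d_T)$, $\bar g_i \in L_{p,q}(\mr^d_T)$, and $\bar f_k \in L_{p_k,q_k}(\mr^d_T)$ (with matching norm bounds) satisfying $\bar v_t = D_i \bar g_i + \sum_k \bar f_k$ in $\mr^d_T$. The $\mr^d$-version of Theorem \ref{thm702} (obtainable from the half-space case by reflection) then controls $\|\bar v\|_{L_{p_0,q_0}(\mr^d_T)}$, and restriction to $(0,T)\times\Omega(x_0,R)$, where $\bar v = u$, yields \eqref{imb2}.

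The main technical obstacle is producing the identity $\bar v_t = D_i \bar g_i + \sum_k \bar f_k$ globally on $\mr^d_T$ with the required norm bounds. Since an extension $E$ applied for each fixed $t$ does not commute with $D_i$, one only has a priori $(Ev)_t = E(v_t) = E(D_i\tilde g_i) + \sum_k E(\tilde f_k)$, and the commutator $E(D_i\tilde g_i) - D_i(E\tilde g_i)$ must be re-expressed as a source in $L_{p,q}(\mr^d_T)$ and absorbed into the $\bar f_k$-list. For a Jones-type extension, this commutator is supported in a small neighborhood of $\partial\Omega$ and has $L_{p,q}$-norm bounded by $C\|\tilde g_i\|_{L_{p,q}}$; together with careful bookkeeping of the $R$-dependence introduced by $|D\eta| \le C/R$, this closes the argument.
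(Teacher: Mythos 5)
Your reduction has a genuine gap exactly at the step you yourself flag as the ``main technical obstacle,'' and it is not a bookkeeping issue. The hypothesis on $u$ is only that $u_t = D_i g_i + \sum_k f_k$ holds against test functions compactly supported in $(0,T)\times\Omega(x_0,2R)$. After the (correct) cutoff step you apply a Jones extension $E$ slice-wise in $x$ and assert $(Ev)_t = E(v_t) = E(D_i\tilde g_i) + \sum_k E(\tilde f_k)$, together with the claim that the commutator $E(D_i\tilde g_i) - D_i(E\tilde g_i)$ is an $L_{p,q}$ function with norm bounded by $C\|\tilde g_i\|_{L_{p,q}}$. But $E$ is defined on Sobolev (at best $L_r$) functions, not on distributions, so $E(D_i\tilde g_i)$ has no meaning when $\tilde g_i$ is merely in $L_{p,q}$; equivalently, to test $(Ev)_t$ against $\varphi\in C_0^\infty(\mr^{d+1})$ you would need $E^*\varphi$ to be an admissible test function compactly supported in the domain, which it is not. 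For merely $L_{p,q}$ data neither $D_i(E\tilde g_i)$ nor the purported commutator is a controlled function, and the assertion that the commutator lies in $L_{p,q}$ with norm $\le C\|\tilde g_i\|_{L_{p,q}}$ amounts to constructing an extension that preserves the divergence structure of $v_t$ --- which is precisely the whole difficulty of the lemma. This is the same obstruction the paper records in Remark \ref{rem0430_1}: even with zero lateral boundary values one cannot extend $u$ and invoke the whole-space embedding, because the extension need not have a time derivative of the form $D_i\bar g_i + \bar f$ with controlled norms.

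There is also a structural problem with invoking Theorem \ref{thm702}: in the paper that theorem is obtained by repeating the argument of Lemma \ref{lem0805_1} (its proof is omitted for exactly that reason), so it is not available as an independent input; you would have to prove the $\mr^d$ or half-space case separately, and the general mixed-norm statement with the borderline exponents (the cases $d/p+2/q = 1+d/p_0+2/q_0$ and $d/p_k+2/q_k = 2+d/p_0+2/q_0$) is not a textbook embedding. The paper's proof avoids any spatial extension: it fixes an interior reference point $z_0$, joins each $x\in\Omega(x_0,R)$ to $z_0$ by a John-type curve supplied by the Reifenberg flatness, represents $u(t,x)$ minus a weighted average of $u$ as an integral of $\nabla_{t,x}u$ along space-time paths, and converts the $u_t$-contribution into integrals of $g$ and $f_k$ by integrating by parts against test functions supported strictly inside $\Omega(x_0,2R)$, so the distributional identity is only ever used where it is valid; the mixed-norm bounds then follow from Young's convolution inequality and, in the borderline cases, the Hardy--Littlewood--Sobolev inequality in the time variable. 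To salvage your route you would need an extension operator adapted to the divergence structure, i.e.\ one extending the triple $(v,\tilde g,\tilde f)$ simultaneously with the identity intact, which the slice-wise Jones operator does not provide.
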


\begin{proof} 
By denseness and an extension with respect to $t$, it suffices to prove the desired inequality with $\mr$ in place of $(0,T)$ for $u \in C^\infty(\mr \times \Omega(x_0, 2R))$.
Take $h = 4 \cdot 48 \cdot 24$.
Without loss of generality, we assume that $x_0 = 0$. Set
$$
z_0 := (R/2, 0, \ldots , 0) \in \mr^d
$$
in the coordinate system associated with the origin and $4R$ satisfying \eqref{rfd2}. 
Note that there exist $g_i \in L_{p,q}\left(\mr \times \Omega(2R)\right)$, $i=1,\ldots,d$ and $f_k \in L_{p_k, q_k}(\mr \times \Omega(2R))$, $k = 1, \ldots, m$, such that  $u_t = D_ig_i + \sum_{k=1}^mf_k$ in $\mr \times \Omega(2R)$ in the distribution sense.

For $x \in \Omega(R)$, take a path $\eta(\lambda) = \eta(\lambda; x)$ from $x$ to $z_0$.
Note that there exists a curve on $[0,1]$ satisfying
\begin{equation}
	\label{path_1}
\eta(\lambda) \subset \Omega(7R/4), \quad \operatorname{Lip}(\eta) \le 5R,\quad \operatorname{dist}(\eta(\lambda), \partial\Omega) > \lambda R/h \quad \text{for} \quad \lambda \in [0,1],
\end{equation}
where $\operatorname{Lip}(\eta)$ denotes the Lipschitz constant of $\eta$. For details about \eqref{path_1} we refer the reader to \cite[Appendix]{MR3809039}.
Then, for $(s, z) \in \mr \times B_{R/h}(z_0)$, we define another path from $(t, x) \in \mr \times \Omega(2R)$ to $(s, z)$ by
$$
\tau(\lambda) = \left( (1-\lambda^2)t + \lambda^2s, \eta(\lambda;x) + \lambda(z - z_0) \right) \in \mr \times \Omega(2R), \quad \lambda \in [0, 1].
$$

Take $\phi \in C_0^\infty(\mr^{d+1})$ satisfying
$$
\phi(s,y) = \zeta(s-t) \varphi(y),
$$
where $\zeta(s) \in C_0^\infty(\mr)$, $\varphi(y) \in C_0^\infty(\mr^d)$, and
$$
0 \leq \zeta(s) \leq 1, \quad \zeta = 1 \,\, \text{on} \,\, [(R/2h)^2, 3(R/2h)^2], \quad \operatorname{supp} \zeta \subset \left(0, (R/h)^2\right),
$$
$$
0 \leq \varphi(y) \leq 1, \quad \varphi = 1 \,\, \text{on} \,\, B_{R/2h}, \quad \operatorname{supp} \varphi \subset B_{R/h}.
$$
We further assume that $\varphi$ is radially symmetry so that $\varphi(y) = \varphi(|y|)$.
Then $0 \leq \phi \leq 1$, $|D_x\phi| \leq N/R$, and
$$
\phi \equiv 1 \quad \text{on} \quad [t+(R/2h)^2, t+3(R/2h)^2] \times B_{R/2h}(0),
$$
$$
\operatorname{supp} \phi \subset \left(t, t+(R/h)^2\right) \times B_{R/h}(0).
$$
Let
\begin{align*}
\bar{u} :=& \bar{u}(t) = \frac{1}{\|\phi\|_{L_1(\mr^{d+1})}}\int_{\mr}\int_{\mr^d} u(s,z)\eta(s-t)\varphi(z- z_0) \, dz\,ds
\\
=& \frac{1}{\|\phi\|_{L_1(\mr^{d+1})}}\int_\mr\int_{B_{R/h}(z_0)} u(s,z)\phi(s, z- z_0) \, dz\,ds.
\end{align*}
Then
\begin{equation}
	\label{mean}
\begin{aligned}
 u(t,x) - \bar{u} &= \frac{1}{\|\phi\|}\int_\mr \int_{\mr^d} \left[ u(\tau(0)) - u(\tau(1)) \right] \phi(s, z - z_0)\,dz\,ds 
\\
&= -\frac{1}{\|\phi\|} \int_\mr \int_{\mr^d}  \left[ \int_0^1 \tau'(\lambda)
\cdot 
(\nabla_{t,x}u)\left(\tau(\lambda)\right) \, d\lambda \right] 
\phi(s, z - z_0) \,dz\,ds
\\
&= -\frac{1}{\|\phi\|} (A + B), 
\end{aligned}
\end{equation}
where
$$
A := \int_\mr \int_{\mr^d} \int_0^1 \, 2\lambda(s-t) u_t\left(\tau(\lambda)\right) \phi(s, z-z_0) \, d\lambda \,dz \,ds,
$$
$$
B := \int_\mr \int_{\mr^d} \int_0^1 \left[ \frac{\partial}{\partial\lambda}\left(\eta(\lambda;x) + \lambda(z - z_0) \right)\right] \cdot  
(\nabla_{x}u)\left(\tau(\lambda)\right)
\phi(s, z-z_0) \, d\lambda \, dz\,ds,
$$
\begin{equation}
	\label{phi}
\| \phi \| = \| \phi \|_{L_1(\mr^{d+1})} \ge N(d) R^{d+2}.
\end{equation}

For each $\lambda \in (0,1)$ by the change of variables $(s,z) \to (l,y)$ with 
\begin{equation}
							\label{eq0531_02}
l = (1-\lambda^2) t + \lambda^2 s, \quad y = \eta(\lambda; x) + \lambda(z-z_0),
\end{equation}
we have
\begin{align*}
A &= \int_0^1 \int_\mr \int_{B_{R/h}(z_0)} \, 2\lambda(s-t) u_t(\tau(\lambda)) \phi(s, z-z_0) \, dz \,ds \, d\lambda
\\
&= \int_0^1 \lambda^{-d-2} F(\lambda) \, d\lambda,
\end{align*}
where
$$
F(\lambda) := F(\lambda; t,x)
$$
$$
= \int_\mr \int_{B_{\lambda R/h}(\eta(\lambda))} \frac{2(l-t)}{\lambda} u_t(l, y) \phi \left( \frac{l - (1 - \lambda^2)t}{\lambda^2}, \frac{y - \eta(\lambda ; x)}{\lambda} \right) \, dy \, dl.
$$
Because of the choice of $\phi$ and the properties of $\eta(\lambda;x)$ for $x \in \Omega(R)$ in \eqref{path_1},
we see that
$$
2\frac{(l-t)}{\lambda} \phi \left( \frac{l-(1-\lambda^2)t}{\lambda^2}, \frac{y - \eta(\lambda ; x)}{\lambda} \right) =: \xi(l, y) \in C_0^{\infty}\left(\mr \times \Omega(2R)\right).
$$
Then, since $B_{\lambda R/h}(\eta(\lambda)) \subset \subset \Omega(2R)$ and $u_t = D_ig_i + \sum_{k=1}^m f_k$ in $\mr \times \Omega(2R)$, we have
$$
F(\lambda, t,x) = \int_{\mr}\int_{\Omega(2R)} u_t(l,y)\xi(l,y) dy \, dl = -\int_{\mr} \int_{\Omega(2R)} u(l,y) \xi_t(l,y) dy\,dl
$$
$$
=  \sum_{k=1}^m\int_{\mr}\int_{\Omega(2R)} f_k(l, y)\xi(l, y) dy\,dl - \int_{\mr}\int_{\Omega(2R)} g_i(l,y) D_i\xi(l, y) \, dy\,dl.
$$
Hence, 
\begin{align*}
A =& \sum_{k=1}^m \int_0^1\lambda^{-d-2}\int_{\mr}\int_{\Omega(2R)} f_k(l, y)\xi(l, y) dy\,dl \, d\lambda 
\\
&- \int_0^1\lambda^{-d-2}\int_{\mr}\int_{\Omega(2R)} g_i(l,y) D_i\xi(l, y) \, dy\,dl \, d\lambda =: \sum_{k=1}^mA_{1, k} + A_2.
\end{align*}

We see that
\begin{equation}
							\label{eq0523_02}
0 \leq \frac{l - t}{\lambda} \leq \lambda (R/h)^2 \leq (R/h)^2  , \quad \frac{l-t}{\lambda^2} \leq (R/h)^2
\end{equation}
for $l \in [t, t+ \lambda^2(R/h)^2]$.
Using the fact that $\phi$ has compact support with respect to the time variable in $[t, t+(R/h)^2]$ and \eqref{eq0523_02}, we obtain that, for each $k = 1, \ldots, m$,
\begin{align}
							\label{eq0601_01}
|A_{1, k}| &\leq 2 \int_0^1 \lambda^{-d-2} \int_t^{t + \lambda^2 (R/h)^2} \int_{\Omega(2R)} \left| f_k(l,y)   \frac{l-t}{\lambda}  \phi (\cdot)  \right| \, dy \,dl\, d\lambda
\\
&\leq R^2 \int_0^1 \lambda^{-d-1} \int_t^{t + (R/h)^2} \int_{\Omega(2R)} \left| f_k(l,y)  \phi(\cdot)  \right| \, dy\, dl\, d\lambda, \nonumber
\end{align}
and
\begin{align*}
|A_2| &\leq 2 \int_0^1 \lambda^{-d-2}  \int_{t}^{t + \lambda^2 (R/h)^2} \int_{\Omega(2R)} \left| g_i(l,y)  \frac{l-t}{\lambda^2} (D_i\phi) (\cdot) \right| \, dy\, dl\,  d\lambda
\\
&\leq R^2 \int_0^1 \lambda^{-d-2}  \int_t^{t + (R/h)^2} \int_{\Omega(2R)} \left| g_i(l,y) (D_i\phi)(\cdot)  \right| \, dy \, dl \,  d\lambda,
\end{align*}
where
\begin{equation}
							\label{eq0523_03}
(\phi, D_i \phi)(\cdot) = (\phi, D_i \phi) \left( \frac{l - (1 - \lambda^2)t}{\lambda^2}, \frac{y - \eta(\lambda ; x)}{\lambda} \right).
\end{equation}
By again using the change of variables in \eqref{eq0531_02} and considering the support of $\phi$, we have
$$
|B| \leq \int_t^{t+(R/h)^2}\int_{\Omega(2R)} \int_0^1 \lambda^{-d-2} \left| \dot{\eta}(\lambda;x) + \frac{y - \eta(\lambda ; x)}{\lambda} \right| |\nabla_x u(l,y)| \phi(\cdot) \, d\lambda \, dy \, dl,
$$
where $\phi(\cdot)$ is as in \eqref{eq0523_03}.
Note that 
$$
\phi \left(\frac{l - (1 - \lambda^2)t}{\lambda^2}, \frac{y - \eta(\lambda ; x)}{\lambda}\right) = \zeta\left(\frac{l - t}{\lambda^2}\right) \varphi\left(\frac{y - \eta(\lambda ; x)}{\lambda}\right) = 0
$$
provided that $|x-y| \geq (1/h+5)\lambda R$.
Indeed, by the Lipschitz condition in \eqref{path_1},
\begin{equation*}
|y - \eta(\lambda; x)| \ge |x-y| - |x - \eta(\lambda;x)| \ge \lambda R/h
\end{equation*}
if $|x-y| \ge (1/h + 5)R\lambda$ and $y \in \Omega(2R)$.
On the other hand, if $|x-y|< (1/h+5)\lambda R$ and $y \in \Omega(2R)$,
$$
 \left| \dot{\eta}(\lambda;x) + \frac{y - \eta(\lambda ; x)}{\lambda} \right| \le 5R + \frac{|x-y|}{\lambda} + \left| \frac{x-\eta(\lambda;x)}{\lambda}\right| \le 16R,
$$
where we used \eqref{path_1} for the last inequality.
Hence, we further have
$$
|B| \leq 16 R \int_0^1\lambda^{-d-2}  \int_t^{t+(R/h)^2} \int_{\Omega(2R)} |\nabla u(l,y)|   |\phi(\cdot)|  \, dy  \, dl \, d\lambda.
$$

We now estimate the $L_{p_0}$ norms of $A_{1,k}$, $A_2$, and $B$ with respect to the spatial variables $x$ on $\Omega(R)$.
To estimate $A_{1,k}$, upon recalling that $\varphi$ has compact support in $B_{R/h}$, we note that, for each $\lambda \in [0,1]$ and $l \in [t,t+(R/h)^2]$,
$$
\int_{\Omega(2R)} \left| f_k(l,y) \right|   \varphi\left(\frac{y - \eta(\lambda ; x)}{\lambda}\right) \, dy = \int_{B_{\lambda R/h}\left(\eta(\lambda;x)\right)} \left| f_k(l,y) \right|   \varphi\left(\frac{y - \eta(\lambda ; x)}{\lambda}\right) \, dy
$$
$$
\le \int_{\mr^d} |f_k\left(l, y\right)|  \tilde{\varphi}\left(\frac{x-y}{\lambda}\right) \, dy,
$$
where
$$
\tilde{\varphi}(x) = I_{B_{\left(5 + h^{-1} \right)R}},
$$
since $\left| \varphi \right| \le 1$ and $y \in B_{\lambda h^{-1}R}\left(\eta\left({\lambda; x}\right) \right)$ implies $ y \in B_{\lambda \left(5 + h^{-1} \right)R}\left(x \right)$.
Then, set $a_k \in [1,\infty]$ so that $1+1/p_0 = 1/p_k + 1/a_k$, where $a_k \in [1,\infty]$ is guaranteed by the conditions on $p_0$ and $p_k$.
By Young's convolution inequality we have
$$
\left\| \int_{\Omega(2R)} \left| f_k(l,y) \right| \varphi\left(\frac{y-\eta(\lambda;x)}{\lambda}\right) \, dy \right\|_{L_{p_0}(\Omega(R))} 
\leq  \|f_k(l,\cdot)\|_{L_{p_k}(\Omega(2R))} \|\tilde{\varphi}\left(\cdot/\lambda \right) \|_{L_{a_k}(\mr^d)}
$$
$$
\leq N(d) R^{d+d/p_0-d/p_k}\lambda^{d+d/p_0-d/p_k} \|f_k(l,\cdot)\|_{L_{p_k}(\Omega(2R))}.
$$
Applying these inequalities to \eqref{eq0601_01} along with Minkowski's inequality, we get
\begin{equation}
							\label{eq0525_01}
\|A_{1,k}\|_{L_{p_0}(\Omega(R))} \leq N(d) R^{d+2+d/p_0-d/p_k} \int_0^1 \lambda^{d/p_0-d/p_k-1} F_k(\lambda,t) \, d\lambda
\end{equation}
for each $t \in \mr$, where
$$
F_k(\lambda, t) = \int_{t}^{t + (R/h)^2} \|f_k(l, \cdot)\|_{L_{p_k}(\Omega(2R))} \, \zeta\left( \frac{l-t}{\lambda^2}\right)\,dl.
$$

To estimate $A_2$ and $B$, we set $a \in [1,\infty]$ so that $1+1/p_0 = 1/p + 1/a$.
By proceeding similarly as in the estimate for $A_{1,k}$ above along with the fact that $|D \varphi| \leq N R^{-1}$, we obtain
\begin{equation}
	\label{eq0525_02}
\|A_2\|_{L_{p_0}(\Omega(R))} \le N R^{d + 1 + d/p_0 - d/p} \int_0^1 \lambda^{-2 + d/p_0 - d/p} \, G_i (\lambda, t)\,d\lambda
\end{equation}
and
\begin{equation}
	\label{eq0525_03}
\|B\|_{L_{p_0}(\Omega(R))} \le N R^{d + 1 + d/p_0 - d/p} \int_0^1 \lambda^{-2 + d/p_0 - d/p} \, U(\lambda, t)\,d\lambda,
\end{equation}
for each $t \in \mr$, where $N = N(d)$,
$$
G_i (\lambda, t) = \int_{t}^{t + (R/h)^2} \|g_i(l, \cdot)\|_{L_{p}(\Omega(2R))} \, \zeta\left( \frac{l-t}{\lambda^2}\right)\,dl,
$$
and
$$
U(\lambda, t) = \int_{t}^{t + (R/h)^2} \|\nabla u(l, \cdot)\|_{L_{p}(\Omega(2R))} \, \zeta\left( \frac{l-t}{\lambda^2}\right)\,dl.
$$

Now we are ready to derive the mixed norm estimates of $A_{1,k}$, $A_2$, and $B$.
To estimate $\|A_{1,k}\|_{L_{p_0,q_0}(\mr \times \Omega(R))}$, for each $k = 1, \ldots, m$, we consider the following two cases
$$
\textrm{(i)} \,\, 2 + \frac{d}{p_0} + \frac{2}{q_0} > \frac{d}{p_k} + \frac{2}{q_k}, \quad \textrm{(ii)} \,\, 2 + \frac{d}{p_0} + \frac{2}{q_0} = \frac{d}{p_k} + \frac{2}{q_k}.
$$
\begin{enumerate}[(i)]
\item 
Let $\varepsilon_k := 2 + d/p_0 + 2/q_0 - d/p_k - 2/q_k > 0$ and set $b_k \in [1,\infty]$ so that $1 + 1/q_0 = 1/q_k + 1/b_k$.
Then applying Young's convolution inequality to $F_k(\lambda, t)$ in \eqref{eq0525_01} with respect to $t$ variable, we get
\begin{equation}
							\label{eq0601_02}
\|F_k(\lambda,\cdot)\|_{L_{q_0}(\mr)} \leq \|f_k\|_{L_{p_k,q_k}\left(\mr \times \Omega(2R)\right)} \|\zeta_\lambda\|_{L_{b_k}(\mr)},
\end{equation}
where
$$
\|\zeta\|_{L_{b_k}(\mr)}^{b_k} = \int_{\mr} |\zeta(t/\lambda^2)|^{b_k} \, dt = \int_0^{\lambda^2 (R/h)^2} |\zeta(t/\lambda^2)|^{b_k} \, dt \leq \lambda^2 (R/h)^2.
$$
From this with Minkowski's inequality it follows that
$$
\| A_{1, k} \|_{L_{p_0, q_0}(\mr \times \Omega(R))} \le N(d)R^{d + 2 +  \varepsilon_k}\| f_k \|_{L_{p_k, q_k}(\mr \times \Omega(2R))}\int_0^1 \lambda^{\varepsilon_k -1}\,d\lambda 
$$
$$
\le NR^{d + 2 + \varepsilon_k} \| f_k \|_{L_{p_k, q_k}(\mr \times \Omega(2R))}.
$$

\item We first consider $1 < q_k < q_0 < \infty$. In this case, we have $1/q_k - 1/q_0 \in (0,1)$ and $d/p_0 - d/p_k < 0$.
Since $\operatorname{supp} \zeta \subset \left(0, (R/h)^2\right)$,
$$
 \int_0^1 \lambda^{-1 + d/p_0 - d/p_k}  F_k(\lambda, t)\, d\lambda
$$
$$
 \le \int_t^{t + (R/h)^2} \|f_k(l, \cdot)\|_{L_{p_k}(\Omega(2R))} \int_0^1  \lambda^{-1 + d/p_0 - d/p_k}  \zeta\left( \frac{l-t}{\lambda^2}\right)\,d\lambda \, dl
$$
$$
\le \int_{\mr} \|f_k(l, \cdot)\|_{L_{p_k}(\Omega(2R))} \int_{hR^{-1}|l-t|^{1/2}}^\infty  \lambda^{-1 + d/p_0 - d/p_k} \,d\lambda \, dl
$$
$$
\le NR^{d/p_k - d/p_0} \int_{\mr} |t - l|^{(d/p_0 - d/p_k)/2}  \|f_k(l, \cdot)\|_{L_{p_k}(\Omega(2R))}\,dl.
$$
Thus, we have
$$
\|A_{1, k} \|_{L_{p_0}(\Omega(R))} \leq NR^{d + 2} \int_{\mr}|t - s|^{-1 + 1/q_k - 1/q_0}  \|f_k(s, \cdot)\|_{L_{p_k}(\Omega(2R))} \,ds
$$
for each $t \in  \mr$.
Because $1/q_k - 1/q_0 \in (0,1)$ we see that $|t - s|^{-1 + 1/q_k - 1/q_0}$ is a 1-dimensional Riesz's potential.
Then by the Hardy-Littlewood-Sobolev theorem of fractional integration (see, for instance, \cite[p.119, Theorem 1]{MR0290095}), we arrive at
\begin{equation}
							\label{eq0601_03}
\| A_{1, k} \|_{L_{p_0, q_0}(\mr \times \Omega(R))} \le N R^{d + 2} \|f_k \|_{L_{p_k, q_k}(\mr \times \Omega(2R))}
\end{equation}
where $N = N(d, p_0, q_0, p_k, q_k)$.
For the case $(q_0, q_k) = (\infty,1)$, which requires $p_k = p_0$, we further assume that
$$
\operatorname{supp}\zeta \subset \left(\frac{1}{2}\left(\frac{R}{2h}\right)^2, \left(\frac{R}{h}\right)^2\right).
$$
Then
$$
\int_0^1 \lambda^{-1+d/p_0-d/p_k} F_k(\lambda, t) \, d\lambda
$$
$$
= 
\int_t^{t+(R/h)^2} \|f_k(l,\cdot)\|_{L_{p_k}(\Omega(2R))} \int_0^1 \lambda^{-1} \zeta\left(\frac{l-t}{\lambda^2}\right) \, d \lambda \, dl,
$$
where 
$$
\int_0^1 \lambda^{-1} \zeta\left(\frac{l-t}{\lambda^2}\right) \, d \lambda \leq \int_{hR^{-1}(l-t)^{1/2}}^{\sqrt{8}hR^{-1}(l-t)^{1/2}} \lambda^{-1} \, d \lambda = \ln \sqrt{8}.
$$
Thus, we have the same estimate as in \eqref{eq0601_03} with $q_k = 1$ and $q_0 = \infty$.
\

\end{enumerate}

To estimate the mixed norms of $A_2$ and $B$, we consider the following two cases:
$$
\textrm{(i)} \,\, 1 + \frac{d}{p_0} + \frac{2}{q_0} > \frac{d}{p} + \frac{2}{q}, \quad \textrm{(ii)} \,\, 1 + \frac{d}{p_0} + \frac{2}{q_0} = \frac{d}{p} + \frac{2}{q}.
$$
\begin{enumerate}[(i)]
\item 
Let $\varepsilon := 1 + d/p_0 + 2/q_0 - d/p - 2/q > 0 $ and set $b \in [1,\infty]$ so that $1 + 1/q_0 = 1/q + 1/b$.
Then by Young's convolution inequality applied to $G_i$ and $U$ in \eqref{eq0525_02} and \eqref{eq0525_03} with respect to $t$ variable, as in \eqref{eq0601_02} we obtain
$$
\|G_i(\lambda,\cdot)\|_{L_{q_0}(\mr)} \leq \lambda^{2/b} R^{2/b} \|g_k\|_{L_{p,q}\left(\mr \times \Omega(2R)\right)},
$$
$$
\|U(\lambda,\cdot)\|_{L_{q_0}(\mr)} \leq \lambda^{2/b} R^{2/b} \|\nabla u\|_{L_{p,q}\left(\mr \times \Omega(2R)\right)}.
$$
It then follows that
$$
\| A_2 \|_{L_{p_0, q_0}(\mr \times \Omega(R))} \le N(d)R^{d + 2 + \varepsilon}\| g \|_{L_{p, q}(\mr \times \Omega(2R))}\int_0^1 \lambda^{\varepsilon -1}\,d\lambda 
$$
$$
\le NR^{d + 2 + \varepsilon} \| g \|_{L_{p, q}(\mr \times \Omega(2R))},
$$
where $N = N(d, p, q, p_0, q_0)$.
Similarly, we have
$$
\| B \|_{L_{p_0, q_0}(\mr \times \Omega(R))} \le NR^{d + 2 + \varepsilon} \| Du \|_{L_{p, q}(\mr \times \Omega(2R))}.
$$
where $N = N(d, p, q, p_0, q_0)$.

\item
In this case, since $1<q<q_0<\infty$, we see that $-1+d/p_0-d/p < 0$.
By the same reasoning as in the estimate of $\|A_{1, k}\|_{L_{p_0, q_0}(\mr \times \Omega(R))}$ for the case (ii), we have
$$
\|A_2 \|_{L_{p_0}(\Omega(R))} + \|B \|_{L_{p_0}(\Omega(R))}
$$
$$
\leq NR^{d + 2} \int_{\mr}|t - s|^{-1 + \frac{1}{q} - \frac{1}{q_0}}  \left( \left\||g(s, \cdot)| + |\nabla u(s,\cdot)|\right\|_{L_{p}(\Omega(2R))} \right) \,ds.
$$
Since $1/q- 1/q_0 \in (0,1)$, $|t - s|^{-1 + 1/q - 1/q_0}$ is a 1-dimensional Riesz's potential.
Hence, by the Hardy-Littlewood-Sobolev theorem of fractional integration, we arrive at
$$
\| A_2 \|_{L_{p_0, q_0}(\mr \times \Omega(R))} + \| B \|_{L_{p_0, q_0}(\mr \times \Omega(R))} 
$$
$$
\le N R^{d + 2} \left(   \|g \|_{L_{p, q}(\mr \times \Omega(2R))} + \|Du \|_{L_{p, q}(\mr \times \Omega(2R))} \right)
$$
where $N = N(d, p, q, p_0, q_0)$.
\end{enumerate}

Finally, by combining the above mixed norm estimates for $A_{1,k}$, $A_2$, and $B$ with \eqref{mean} and \eqref{phi} along with the triangle inequality, we obtain
\begin{multline*}
\|u\|_{L_{p_0,q_0}\left(\mr \times \Omega(R)\right)} \leq N R^{d/p_0 + 2/q_0 - d/p - 2/q} \|u\|_{L_{p,q}(\mr \times \Omega(2R))}
\\
+ NR^{1 + d/p_0 + 2/q_0 - d/p - 2/q} \left( \|Du\|_{L_{p, q}\left(\mr \times \Omega(2R)\right)} +  \|g\|_{L_{p, q}\left(\mr \times \Omega(2R)\right)} \right)
\\
+N\sum_{k=1}^m  R^{2 + d/p_0 + 2/q_0 - d/p_k - 2/q_k} \|f_k\|_{L_{p_k, q_k}\left(\mr \times \Omega(2R)\right)},
\end{multline*}
where $N = N(d, p, q, p_0, q_0, p_k, q_k, m)$.
Indeed, the term $ \bar{u} $ is estimated  as follows.
$$
|\bar{u}| \leq N R^{-d-2} \int_{\mr} \zeta(s-t) \int_{B_{R/h}(z_0)} |u(s,z)| \, dz \, ds
$$
$$
\leq N R^{-d/p-2} \int_{\mr} \zeta(s-t) \|u(s,\cdot)\|_{L_p(\Omega(2R))} \, ds
$$
where $N = N(d)$.
Thus, by Young's convolution inequality with respect to $t$, we get
\begin{align*}
\|\bar{u}\|_{L_{p_0, q_0}(\mr \times \Omega(R))} &\leq   N R^{d/p_0-d/p-2} \|\zeta\|_{L_b(\mr)} \|u\|_{L_{p,q}(\mr \times \Omega(2R))}
\\
&\le N R^{d/p_0 - d/p + 2/q_0 - 2/q }\|u\|_{L_{p,q}(\mr \times \Omega(2R))},
\end{align*}
where $b \in [1,\infty]$ satisfies $1 + 1/q_0 = 1/q + 1/b$ and $\|\eta\|_{L_b(\mr)} \leq N R^{2/b}$.

The lemma is proved.
\end{proof}

\begin{lemma}[Interior estimates]
							\label{lem0805_2}
Let $p, q \in [1,\infty]$ and $\Omega \subset \mr^d$ be a domain.
Also let $p_0 \in [p,\infty]$, $q_0 \in (q,\infty]$, $p_k \in (0,\infty]$, $q_k \in (0,\infty]$, $k=1,\ldots,m$, satisfy the condition (ii) in Assumption \ref{dom}.
Then for $u \in W_{p,q}^{1, 0}(\Omega_T)$ and for any $\rho > 0$, if $u_t = D_ig_i + \sum_{k=1}^m f_k$ where $g = (g_1, \ldots, g_d) \in \left( L_{p, q}(\Omega_T) \right)^d$ and $f_k \in L_{p_k, q_k}(\Omega_T)$, $k = 1, \ldots, m$, we have
$$
\|u\|_{L_{p_0,q_0}((0, T) \times \Omega^{\rho})} \leq N \left( \|u\|_{W_{p,q}^{1, 0}(\Omega_T)} + \|g\|_{L_{p, q}(\Omega_T)} + N \sum_{k=1}^m \| f_k \|_{L_{p_k, q_k}(\Omega_T)}\right) , 
$$
where $N = N(d, p,q,p_0,q_0, p_k, q_k, m, T, \rho)$ and $\Omega^\rho = \{x \in \Omega: \operatorname{dist}(x,\partial\Omega) > \rho \}$.
\end{lemma}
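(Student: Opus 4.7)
The plan is to adapt the averaging argument of Lemma~\ref{lem0805_1} by letting the averaging base point $z_0$ depend on the evaluation point $x$ rather than being fixed. Concretely, set $R = \rho/4$; for every $x \in \Omega^\rho$ we have $B_{R/h}(x) \subset B_\rho(x) \subset \Omega$, so the averaging cylinder $(t, t+(R/h)^2) \times B_{R/h}(x)$ lies inside $\Omega_T$ for every $(t,x) \in (0,T) \times \Omega^\rho$. Take $z_0(x) := x$ and, for the auxiliary path from $x$ to $z \in B_{R/h}(x)$, use the straight segment $\eta(\lambda) = (1-\lambda)x + \lambda z$, so that the curve $\tau(\lambda) = ((1-\lambda^2)t + \lambda^2 s, x + \lambda(z-x))$ remains in $\Omega_T$.

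With these choices, the representation
\[
u(t,x) - \bar u(t,x) = -\|\phi\|^{-1}\bigl(A(t,x) + B(t,x)\bigr), \quad A = \sum_{k=1}^m A_{1,k} + A_2,
\]
from the proof of Lemma~\ref{lem0805_1} carries over verbatim, with $\bar u$, $A_{1,k}$, $A_2$, $B$ now depending on $(t,x)$. Because the kernels $\varphi\bigl((y-x)/\lambda\bigr)$ and $D_i\varphi\bigl((y-x)/\lambda\bigr)$ are translation-invariant in $x$, each of $\bar u$, $A_{1,k}$, $A_2$, $B$ becomes a time-space integral operator whose spatial part is a convolution of $u$, $f_k$, $g$, $Du$ (respectively) against a bounded-support kernel. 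Young's convolution inequality applied in $x$, together with Hardy-Littlewood-Sobolev in $t$ in the borderline case $d/p_k + 2/q_k = 2 + d/p_0 + 2/q_0$, then yields the analogues of the bounds in Lemma~\ref{lem0805_1}:
\[
\|A_{1,k}\|_{L_{p_0,q_0}((0,T)\times\Omega^\rho)} \le N R^{\gamma_k}\|f_k\|_{L_{p_k,q_k}(\Omega_T)}, \quad \|A_2\|_{L_{p_0,q_0}((0,T)\times\Omega^\rho)} \le N R^{\gamma}\|g\|_{L_{p,q}(\Omega_T)},
\]
with parallel bounds for $B$ in terms of $Du$ and for $\bar u$ in terms of $u$; the exponents $\gamma, \gamma_k$ and the mixed-norm algebra are identical to those computed in Lemma~\ref{lem0805_1}.

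Combining these four bounds via the representation for $u$ and fixing $R = \rho/4$ yields the desired estimate with constant $N = N(d,p,q,p_0,q_0,p_k,q_k,m,T,\rho)$; the cases $p_0 = \infty$ or $q_0 = \infty$ are obtained by the usual substitution in Young's inequality. The main obstacle is the same as in Lemma~\ref{lem0805_1}: verifying the algebra so that each application of Young's (and Hardy-Littlewood-Sobolev's) inequality is legitimate under condition (ii) of Assumption~\ref{dom}. That verification is unchanged by the move from the boundary to the interior setting — the interior case only simplifies the geometry (the path $\eta$ becomes a straight segment and $z_0$ becomes $x$), so no covering of $\Omega^\rho$ is needed and the unbounded-domain case falls out for free.
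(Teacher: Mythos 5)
Your proposal is correct and follows essentially the same route as the paper: the paper's proof also reduces to the argument of Lemma \ref{lem0805_1} by centering the averaging cylinder at the evaluation point and replacing the Reifenberg path by the straight segment $\eta(\lambda)=(1-\lambda)x+\lambda z$, so that the convolution-type (Young and Hardy--Littlewood--Sobolev) estimates carry over unchanged. Your observation that translation invariance of the kernels makes any covering of $\Omega^\rho$ unnecessary, so the domain may be unbounded, is exactly the point of the paper's sketch.
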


\begin{proof}
The proof is similar to that of Lemma \ref{lem0805_1}. 
Fix $(t, x) \in \mr \times \Omega^{2\rho}$.
Then for $s \in \mr$ and $z \in B_{\rho}(x)$, define a path $\tau$ from $(t, x)$ to $(s, z)$ by 
$$
\tau(\lambda) = \left((1-\lambda^2)t + \lambda^2s, \eta(\lambda)\right) = \left( (1-\lambda^2)t + \lambda^2s, (1-\lambda)x + \lambda z \right)
$$
on $\lambda \in [0, 1]$. Then one may proceed similarly as in the proof of Lemma \ref{lem0805_1}.
Note that $\tau(\lambda) \in \mr \times \Omega^{\rho}$, $B_{\lambda\rho}(\eta(\lambda)) \subset\subset \Omega$ for $\lambda \in [0, 1]$, and $|\eta'| =  |x -z| \le \rho$.
\end{proof}

\begin{proof}[Proof of Theorem \ref{thm602}]
Since the $(R_0, \gamma)$-Reifenberg flatness is stronger than the $(R_0, 1/48)$-Reifenberg flatness for $\gamma \le 1/48$, we fix $\gamma = 1/48$. 

Case $q_0=q$ : Note that $1 + d/p_0 \ge d/p$ and we exclude the case $(p, p_0) = (d, \infty)$. Then since $u(t, \cdot) \in W_p^1(\Omega)$ for almost every $t \in (0, T)$ and $\Omega$ is an extension domain (see \cite{MR631089, MR1446617}),
$$
\| u(t, \cdot) \|_{L_{p_0}(\Omega)} \le N(p, p_0, d, R_0)\left(\| u (t, \cdot)\|_{L_p(\Omega)} + \| Du(t, \cdot) \|_{L_p(\Omega)}\right)
$$
for almost every $t \in (0, T)$. Then by integrating $q(=q_0)$-th power of both sides over $(0, T)$, we obtain \eqref{imb2} immediately.  If $p_0 = \infty$ and $p=d=1$, we directly obtain the same estimate by the fundamental theorem of calculus.

Case $q_0 > q$ : We derive \eqref{imb2} by using the partition of unity argument with respect to the $x$ variables, Lemma \ref{lem0805_1} with $R = R_0/4$, and Lemma \ref{lem0805_2} with a sufficiently small $\rho$ (for example, $\rho = R_0/384$). Also recall that $\Omega$ is bounded.
\end{proof}

One can also obtain the following embedding result when $\Omega = \mr^d$ or $\Omega = \mr^d_+$ by following similar steps as in the proof of Lemma \ref{lem0805_1}.
We omit the proof and leave the details to the interested reader.
Note that the local embedding results as in Lemma \ref{lem0805_1} may not be used via the partition of unity argument to derive embeddings for unbounded domains because we are dealing with mixed-norms.

\begin{theorem}[Embedding in $(0, T) \times \mr^d$ and $(0, T) \times \mr_+^d$]
	\label{thm702}
Let $p, q \in [1, \infty]$ and $\Omega = \mr^d$ or $\Omega=\mr_+^d$.
Also let $p_0 \in [p,\infty]$, $q_0 \in [q,\infty]$, $p_k \in (0,\infty]$, $q_k \in (0,\infty]$, $k=1,\ldots,m$, satisfy either the condition (i) or (ii) in Assumption \ref{dom}.
Then for $u \in W_{p, q}^{1, 0}(\Omega_T)$, if $u_t = D_ig_i + \sum_{k=1}^m f_k$ where $g = (g_1, \ldots, g_d) \in \left( L_{p, q}(\Omega_T) \right)^d$ and $f_k \in L_{p_k, q_k}(\Omega_T)$, we have
\begin{equation*}
\|u\|_{L_{p_0,q_0}(\Omega_T)} \le N \left( \|u\|_{W_{p,q}^{1, 0}(\Omega_T)} + \| g \|_{L_{p, q}(\Omega_T)} + \sum_{k=1}^m \| f_k \|_{L_{p_k, q_k}(\Omega_T)} \right)
\end{equation*}
where $N = N(d, p, q, p_0, q_0, p_k, q_k, m, T)$.
\end{theorem}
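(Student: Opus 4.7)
The plan is to follow the proof strategy of Lemma \ref{lem0805_1} but globalize the averaging so that the argument produces mixed-norm estimates on the entire unbounded domain in one shot; as the remark preceding Theorem \ref{thm702} points out, a partition-of-unity reduction to Lemma \ref{lem0805_1} is not available in mixed norms. First I would extend $u$ in the time variable by multiplying by a smooth cutoff $\eta(t)$ with $\eta \equiv 1$ on $(0,T)$ and $\operatorname{supp}\eta \subset (-T, 2T)$, so that $\tilde u := \eta u$ lives on $\mathbb{R} \times \Omega$ and satisfies
\[
\tilde u_t = D_i(\eta g_i) + \sum_k \eta f_k + \eta' u;
\]
thanks to Remark \ref{rem0602_1}, the new term $\eta' u$ lies in $L_{p,q}$, an admissible $f_k$-class, and its norm is bounded by the original data times a constant depending on $T$.

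Case (i), $q_0 = q$, is immediate from the stationary Gagliardo-Nirenberg-Sobolev embedding $W^1_p(\Omega) \hookrightarrow L_{p_0}(\Omega)$ (for $\Omega = \mathbb{R}^d$ no extension is needed, while for $\Omega = \mathbb{R}^d_+$ one reflects across $\{x_1 = 0\}$), applied for a.e. $t$ and then raised to the $q$-th power and integrated in $t$; the excluded case $(p,p_0) = (d,\infty)$ with $d \geq 2$ is exactly where this embedding fails. No equation is used.

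Case (ii), $q_0 > q$, is the heart of the matter. For each $(t,x)\in\mathbb{R}\times\Omega$, set $z_0(x) := x + Re_1$ with $R > 0$ a free parameter and $e_1$ the first standard basis vector, and take the straight-line path $\eta(\lambda) = (1-\lambda)x + \lambda z$ for $z \in B_{R/h}(z_0(x))$. With the fixed constant $h$ from Lemma \ref{lem0805_1}, $z_1 \geq x_1 + R(1 - 1/h) > x_1 \geq 0$ forces $\eta(\lambda)_1 \geq x_1$, so the path stays inside $\Omega$ for both $\Omega = \mathbb{R}^d$ and $\Omega = \mathbb{R}^d_+$ simultaneously. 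Define the $x$-dependent mean
\[
\bar u(t,x) = \frac{1}{\|\phi\|}\int_{\mathbb{R}}\int_{\mathbb{R}^d} u(s,z)\,\zeta(s-t)\,\varphi(z - x - Re_1)\,dz\,ds,
\]
which is a space-time convolution of $u$ against a fixed kernel, and apply the fundamental-theorem-of-calculus identity along $\tau(\lambda)=((1-\lambda^2)t+\lambda^2 s,\eta(\lambda))$. Substituting $u_t = D_i g_i + \sum_k f_k$ exactly as in Lemma \ref{lem0805_1} then yields $u - \bar u = -(A_2 + \sum_k A_{1,k} + B)/\|\phi\|$, where the summands are space-time convolutions of $g_i$, $f_k$, and $\nabla_x u$ against $R,\lambda$-dependent kernels.

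The main step is to take mixed $L_{p_0,q_0}$ norms via the mixed-norm Young convolution inequality with H\"older conjugates $1 + 1/p_0 = 1/p_k + 1/a_k$ and $1 + 1/q_0 = 1/q_k + 1/b_k$. Direct computation of the $L_{a_k,b_k}$-norms of the kernels, paralleling the bounds \eqref{eq0525_01}--\eqref{eq0601_03} in Lemma \ref{lem0805_1} and using $\|\phi\|\sim R^{d+2}$, produces scaling estimates $\bigl\|A_{1,k}/\|\phi\|\bigr\|_{L_{p_0,q_0}} \leq N R^{\varepsilon_k}\|f_k\|_{L_{p_k,q_k}}$ with $\varepsilon_k = 2 + d/p_0 + 2/q_0 - d/p_k - 2/q_k \geq 0$, analogous bounds with $R^{\varepsilon}$ and $\varepsilon = 1 + d/p_0 - d/p + 2/q_0 - 2/q \geq 0$ for $A_2$ and $B$, and $\|\bar u\|_{L_{p_0,q_0}} \leq N R^{\alpha_0}\|u\|_{L_{p,q}}$ with $\alpha_0 = d/p_0 - d/p + 2/q_0 - 2/q \leq 0$. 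Specializing to $R = 1$ collapses all scaling factors to constants and delivers the asserted inequality. The borderline cases $\varepsilon_k = 0$ or $\varepsilon = 0$ would be treated exactly as in Lemma \ref{lem0805_1}, by viewing $|t-s|^{-1+1/q_k - 1/q_0}$ as a one-dimensional Riesz potential and invoking the Hardy-Littlewood-Sobolev fractional integration theorem. The main expected obstacle is careful bookkeeping of the scaling exponents for the various convolution kernels; the potential geometric difficulty of keeping the path inside $\Omega$ near $\partial\mathbb{R}^d_+$ is entirely sidestepped by the choice $z_0(x) = x + Re_1$.
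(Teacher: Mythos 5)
Your proposal is correct and follows exactly the route the paper indicates for Theorem \ref{thm702} (whose proof is omitted and said to follow the steps of Lemma \ref{lem0805_1}): the globalization via the translated center $z_0(x)=x+Re_1$, which turns $\bar u$ and the terms $A_{1,k}$, $A_2$, $B$ into genuine space--time convolutions estimated by mixed-norm Young and, at the borderline, by fractional integration, is precisely the intended adaptation, and the case $q_0=q$ via the slice-wise Sobolev embedding matches the paper's treatment of that case. Two small points to tidy up: you must extend $u$ (and the identity $u_t=D_ig_i+\sum_k f_k$) across $t=0$ and $t=T$, e.g.\ by reflection, before multiplying by the cutoff $\eta(t)$, since $\eta u$ is otherwise undefined outside $(0,T)$; and the borderline case $(q_k,q_0)=(1,\infty)$ permitted by \eqref{eq0603_03} is not covered by the Hardy--Littlewood--Sobolev theorem but by the separate argument in Lemma \ref{lem0805_1} with $\operatorname{supp}\zeta$ kept away from the origin, so it should not be subsumed under the Riesz-potential step.
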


\begin{remark}
If the spatial domain is $\Omega = \mr^d$ or $\Omega = \mr^d_+$, the result in Theorem \ref{thm702} holds even when $q = 1$ in \eqref{eq0602_02}.
In this case we need
$$ 
\frac{d}{p_k} + \frac{2}{q_k} \le 1 + \frac{d}{p} + \frac{2}{q},
\quad \text{and} \quad
q_k = 1 \quad \text{if} \quad \frac{d}{p_k} + \frac{2}{q_k} = 1 + \frac{d}{p} + \frac{2}{q},
$$
from which the inequality \eqref{eq0603_02} (\eqref{eq0603_03} with $1=q_k < q_0 < \infty$) holds because
$d/p + 2/q \leq 1 + d/p_0 + 2/q_0$.
When $\Omega$ is a Reifenberg flat domain, the same case can be considered for $u$ having the zero lateral boundary condition.
However, even with such a boundary condition, one cannot use a zero extension of $u$ to the whole $\mr^d$ and the embedding for $\mr^d$ because, as noted in Remark \ref{rem0430_1}, the extension may not belong to the same class of functions.
\end{remark}

\begin{remark}
As shown above, if $\Omega$ is a bounded $(R_0, 1/48)$-Reifenberg flat domain with $\gamma \le 1/48$,
Assumption \ref{dom} ($\Omega$) holds with
$$
C = C(R_0, d, p, q, p_0, q_0, p_k, q_k, m, \operatorname{diam}(\Omega), T).
$$
Hence, Theorem \ref{theo401} holds with 
the constant 
$$
N = N(d, \delta, p,q,p_1,q_1, \ell_2,r_2,\ell_3,r_3, a^i, b^i, c, K, R_0, \operatorname{diam}(\Omega), T).
$$
If $\Omega = \mr^d$ or $\mr^d_+$, Assumption \ref{dom} ($\Omega$) holds with 
$$
C = C(d, p, q, p_0, q_0, p_k, q_k, m, T)
$$
and Theorem \ref{theo401} also holds with
$$
N = N(d, \delta, p, q, p_1, q_1, \ell_2,r_2,\ell_3,r_3, a^i, b^i, c, K, T).
$$
Also see Remark \ref{john2} for the conormal derivative problem.
\end{remark}

\begin{remark}[Elliptic problems]
In this paper Lemma \ref{190405@lem1} is the key lemma, where we use the duality argument in the proof.
Using the same duality argument and proceeding as in this paper, one can obtain analogous results for the elliptic case.
See \cite{MR3623550, MR3900848, MR3328143} for related results based on different approaches including a functional analytic approach.
\end{remark}

\section*{Acknowledgments}
The authors would like to thank Hongjie Dong for helpful suggestions and discussion including the full ranges of $(p_k,q_k)$ in Theorem \ref{thm602} and an example supporting the assertion in Remark \ref{rem0430_1}.
The authors also thank Jongkeun Choi for discussion on an early version of the paper.
\bibliographystyle{plain}

\def\cprime{$'$}

\end{document}